\newtheorem{theorem}{Theorem}[section]
\newtheorem{corollary}[theorem]{Corollary}
\newtheorem{definition}[theorem]{Definition}
\newtheorem{lemma}[theorem]{Lemma}
\newtheorem{proposition}[theorem]{Proposition}
\newtheorem{remark}[theorem]{Remark}
\begin{document}
	
	%
\title[On upper triangular operator matrices over $C^{*}$-algebras]{On upper triangular operator matrices over $C^{*}$-algebras}
	
\author[Stefan Ivkovi\'{c}]{Stefan Ivkovi\'{c}}
\address{Stefan Ivkovi\'{c} ,The Mathematical Institute of the Serbian Academy of Sciences and Arts, Kneza Mihaila 36 p.p. 367, 11000 Beograd, Serbia, Tel.: +381-69-774237 }
\email{stefan.iv10@outlook.com}
	\newcommand{\AuthorNames}{Stefan Ivkovi\'{c}}
	
	\newcommand{\FilMSC}{Primary xxxxx (mandatory); Secondary xxxxx, xxxxx (optionally)}
	\newcommand{\FilKeywords}{(keywords, mandatory)}
	\newcommand{\FilCommunicated}{(name of the Editor, mandatory)}
	\newcommand{\FilSupport}{Research supported by ... (optionally)}

	\begin{abstract}
			In this paper we study  the operator matrices 
		\begin{center}
			$\mathbf{M}_{\mathrm{C}}^{\mathcal{A}}=\left\lbrack
			\begin{array}{ll}
			F & C \\
			0 & D \\
			\end{array}
			\right \rbrack
			$
		\end{center}
		acting on $H_{\mathcal{A}} \oplus H_{\mathcal{A}}, $ where $F,D \in B^{a}(H_{\mathcal{A}}).$ We investigate the relationship  between the semi-Fredholm properties of $\mathbf{M}_{\mathrm{C}}^{\mathcal{A}} $ and of $F,D,$ when $F,D$ are fixed and $C$ varies over $B^{a}(H_{\mathcal{A}}),$ as an analogue of the results by Djordjevi\'{c}. 
	\end{abstract}
	\maketitle
	\makeatletter
	\renewcommand\@makefnmark%
	{\mbox{\textsuperscript{\normalfont\@thefnmark)}}}
	\makeatother
\vspace{10pt}
KEYWORDS: Hilbert C*- module, semi-A-Fredholm operator, A-Fredholm spectra,
perturbations of spectra, essential spectra and operator matrices.\\
\vspace{10pt}
MSC (2010): Primary MSC 47A53 and Secondary MSC 46L08	\\	
	\section{Introduction }
	Perturbations of spectra of operator matrices were earlier studied in several papers such as \cite{DDj}.
	In \cite{DDj} Djordjevic lets $X$ and $Y$ be Banach spaces and the operator $\mathbf{M}_{\mathrm{C}}:X \oplus Y \longrightarrow X \oplus Y $ be given as $2\times2$ operator matrix
	$\left\lbrack
	\begin{array}{ll}
	\mathrm{A}& \mathrm{C} \\
	0 & \mathrm{B} \\
	\end{array}
	\right \rbrack
	$
	where $\mathrm{A}\in B(X),$ $\mathrm{B}\in B(Y)$ and $\mathrm{C} \in B(Y,X).$ Djordjevic investigates the relationship between certain semi-Fredholm properties of $\mathrm{A},\mathrm{B}$ and certain semi-Fredholm properties of $\mathbf{M}_{\mathrm{C}}$. Then he deduces as corollaries the description of the intersection of spectra of ${\mathbf{M}_{\mathrm{C}}}^{\prime}s,$ when $\mathrm{C}$ varies over all operators in $B(Y,X)$ and $\mathrm{A},\mathrm{B}$ are fixed, in terms of spectra of $\mathrm{A}$ and $\mathrm{B}.$ The spectra which he considers are not in general ordinary spectra, but rather different kind of Fredholm spectra such as essential spectra, left and right Fredholm spectra etc...
	
	Some of the main results in \cite{DDj} are Theorem 3.2, Theorem 4.4 and Theorem 4.6. In Theorem 3.2 Djordjevic gives necessary and sufficient conditions on operators A and B for the operator $M_{C}$ to be Fredholm. 
	
	Recall that two Banach spaces $U$ and $V$ are \textit{isomorpphic up to a finite dimensional subspace,} if one following statements hold: 
	\begin{description}
		\item[(a)]{there exists a bounded below operator $J_{1}: U \Rightarrow V$, such that dim $V/J_{1}(U) < \infty ,$ or };
		\item[(b)]{there exists a bounded below operator $J_{2}: V \Rightarrow U$, such that dim $V/J_{2}(V) < \infty .$}
	\end{description}
	Recall also that for a Banach space $X,$ the sets $\Phi(X),\Phi_{l}(X),\Phi_{r}(X) $ denote the sets of all Fredholm, left-Fredholm and right-Fredholm operators on $X,$ respectively. 
	\begin{theorem} \label{t3.2} \cite[Theorem 3.2]{DDj}
		Let $A \in \mathcal{L}(X)$ and $B \in \mathcal{L}(Y)$ be given and consider the statments:
		\begin{description}
			\item[(i)]{$M_{C} \in \Phi(X \oplus Y) $ for some $C \in \mathcal{L}(Y,X)$;}
			\item[(ii)]
			\begin{description}
				\item[(a)]{$A \in \Phi_{l}(X)$};
				\item[(b)]{$B \in \Phi_{r}(Y)$};
				\item[(c)]{$\mathcal{N}(B)$ and $X/\overline{\mathcal{R}(A)}$ are isomorphic up to a finite dimensional suspace.}
			\end{description}
		\end{description}
		Then (i) $\Leftrightarrow$ (ii).
	\end{theorem}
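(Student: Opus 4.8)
The plan is to prove the two implications separately: (ii) $\Rightarrow$ (i) by an explicit construction of a suitable $C$, and (i) $\Rightarrow$ (ii) by extracting the required data from an Atkinson-type pseudo-inverse of $M_C$.

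For (i) $\Rightarrow$ (ii), assume $M_C \in \Phi(X \oplus Y)$. By Atkinson's theorem there is an operator $R$ on $X \oplus Y$ with $R M_C - I$ and $M_C R - I$ of finite rank; writing $R = (R_{ij})_{i,j=1,2}$ in block form and comparing entries, the $(1,1)$-entry of $R M_C$ reads $R_{11} A = I_X - (\text{finite rank})$, so $R_{11}$ is a left inverse of $A$ modulo finite rank operators and $A \in \Phi_l(X)$; likewise the $(2,2)$-entry of $M_C R$ reads $B R_{22} = I_Y - (\text{finite rank})$, so $B \in \Phi_r(Y)$. This settles (a) and (b); in particular $\mathcal{R}(A)$ is closed, so $\overline{\mathcal{R}(A)} = \mathcal{R}(A)$. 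For (c) I would consider the bounded map $\phi \colon \mathcal{N}(B) \to X/\mathcal{R}(A)$ given by $\phi(y) = Cy + \mathcal{R}(A)$ and show it is Fredholm: its kernel $\{y \in \mathcal{N}(B) : Cy \in \mathcal{R}(A)\}$ is the image of $\mathcal{N}(M_C)$ under $(x,y) \mapsto y$, hence finite-dimensional, while its range is $(\mathcal{R}(A) + C(\mathcal{N}(B)))/\mathcal{R}(A)$, and $\mathcal{R}(A) + C(\mathcal{N}(B))$ coincides with $\{x \in X : (x,0) \in \mathcal{R}(M_C)\} = \mathcal{R}(M_C) \cap (X \oplus \{0\})$, which is closed and of finite codimension in $X$ because $\mathcal{R}(M_C)$ is. Finally, a Fredholm operator between Banach spaces always exhibits those spaces as isomorphic up to a finite-dimensional subspace (split off the finite-dimensional kernel, or a finite-dimensional complement of the range, according to the sign of the index), and applying this to $\phi$ yields (c).

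For (ii) $\Rightarrow$ (i), I would first use (a) to write $X = \mathcal{R}(A) \oplus X_0$ with $\mathcal{R}(A)$ closed and fix an isomorphism $\sigma$ of $X/\mathcal{R}(A)$ onto $X_0 \subseteq X$, and use (b) to write $Y = \mathcal{N}(B) \oplus Y_0$. Then define $C$ to vanish on $Y_0$ and on $\mathcal{N}(B)$ to be built from the isomorphism-up-to-a-finite-dimensional-subspace furnished by (c): if there is a bounded-below $J_1 \colon \mathcal{N}(B) \to X/\mathcal{R}(A)$ with finite-dimensional cokernel, set $C|_{\mathcal{N}(B)} = \sigma \circ J_1$; if instead there is a bounded-below $J_2 \colon X/\mathcal{R}(A) \to \mathcal{N}(B)$ with finite-dimensional cokernel, split $\mathcal{N}(B) = J_2(X/\mathcal{R}(A)) \oplus G$ with $\dim G < \infty$ and set $C = \sigma \circ J_2^{-1}$ on $J_2(X/\mathcal{R}(A))$ and $C = 0$ on $G$. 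A direct computation then gives $\mathcal{N}(M_C) = \mathcal{N}(A) \times \{0\}$ in the first case and $\mathcal{N}(A) \times G$ in the second, both finite-dimensional, and $\mathcal{R}(M_C) = (\mathcal{R}(A) + C(\mathcal{N}(B))) \times \mathcal{R}(B)$, where $C(\mathcal{N}(B))$ is a closed finite-codimensional subspace of $X_0$ (first case) or all of $X_0$ (second case) and $\mathcal{R}(B)$ has finite codimension in $Y$ by (b); hence $\mathcal{R}(M_C)$ is closed of finite codimension and $M_C \in \Phi(X \oplus Y)$.

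I expect the main obstacle to lie in the handling of condition (c): in the forward direction, in recognizing and proving that the kernel and range of the auxiliary map $\phi$ are controlled precisely by $\mathcal{N}(M_C)$ and $\mathcal{R}(M_C) \cap (X \oplus \{0\})$; and in the backward direction, in organizing the two alternatives in the definition of ``isomorphic up to a finite-dimensional subspace'' so that $C(\mathcal{N}(B))$ fills the cokernel of $A$ exactly up to a finite-dimensional discrepancy. The Atkinson argument and the block computations are otherwise routine.
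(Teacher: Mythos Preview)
Your argument is correct in both directions. Note, however, that the paper does not itself prove this theorem: it is quoted from \cite{DDj} (with the implication (i)$\Rightarrow$(ii) attributed further back to \cite{HLL}), and the paper's own contribution is the Hilbert $C^{*}$-module analogue, proved as Theorem~\ref{t320} (for (i)$\Rightarrow$(ii)) and Theorem~\ref{t360} (for (ii)$\Rightarrow$(i)), with remarks explaining how the Hilbert-space case drops out.

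Comparing your route to the one the paper takes in that generalized setting: for (ii)$\Rightarrow$(i) the two are essentially the same idea---build $C$ from the embedding supplied by condition (c) so that $C(\mathcal{N}(B))$ fills the cokernel of $A$ up to a finitely generated defect---though the paper verifies Fredholmness by exhibiting an explicit $2\times 2$ block decomposition with invertible $(1,1)$-corner rather than by computing kernel and range directly. For (i)$\Rightarrow$(ii) the approaches diverge more substantially. You invoke Atkinson, read off the $(1,1)$ and $(2,2)$ blocks of a parametrix to get $A\in\Phi_l$, $B\in\Phi_r$, and then isolate condition (c) via the auxiliary Fredholm map $\phi\colon \mathcal{N}(B)\to X/\mathcal{R}(A)$, $y\mapsto Cy+\mathcal{R}(A)$. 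The paper instead factors $M_C=D'C'F'$ with $C'$ invertible, pushes an $\mathcal{M}\Phi$-decomposition of $M_C$ through the chain to get $\mathcal{M}\Phi_{\pm}$-decompositions of $F'$ and $D'$, and then compares two such decompositions via stability results (\cite[Corollaries 2.18, 2.19]{I}) to conclude that the relevant complements agree up to finitely generated summands. Your approach is shorter and more transparent in the Banach-space setting and yields (c) in one stroke; the paper's decomposition-chain method is heavier but is what survives the passage to Hilbert $C^{*}$-modules, where quotients like $X/\mathcal{R}(A)$ and maps into them are not available.

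One small point: Atkinson gives an inverse modulo \emph{compact} operators, not finite-rank ones, but this is harmless for concluding $A\in\Phi_l$ and $B\in\Phi_r$.
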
	
	The implication (i) $\Rightarrow $ (ii) was proved in \cite{HLL}, whereas Djordjevic proves the implication (ii) $\Rightarrow $ (i).
	
	Similarly in Theorem 4.4 and Theorem 4.6 of \cite{DDj} Djordjevic investigates the case when $M_{C}$ is right and left semi-Fredholm operator, respectively. 
	Here we are going to recall these results as well, but first we repeat the following definition from \cite{DDj}:
	\begin{definition} \label{d380n}   \cite[Definition 4.2]{DDj}
		Let $X$ and $Y$ be Banach spaces. We say that $X$ can be embeded in $Y$ and write $X \preceq Y $ if and only if there exists a left invertible operator $J:X \rightarrow Y.$ We say that $X$ can essentially be embedded in $Y$ and write  $X \prec Y ,$ if and only if  $X \preceq Y $ and $Y/T(X)$ is an infinite dimensional linear space for all $T \in  \mathcal{L}(X,Y).$
	\end{definition}
	\begin{remark} \cite[Remark 4.3]{DDj}
		Obviously,  $X \preceq Y $ if and only if there exists a right invertible operator $J_{1}:Y \rightarrow X$.
	\end{remark}
	If $H$ and $K$ are Hibert spaces, then $H \preceq K$ if and only if dim $H \leq $ dim $K.$ Also $H \prec K$ if and only if dim $H <  $ dim $K$ and $K$ is ifinite dimensional. Here dim $H$ denotes the orthogonal dimension of $H.$
	\begin{theorem} \label{t280n}  
		\cite[Theorem 4.4]{DDj} Let $A \in \mathcal{L}(X)$ and $B \in \mathcal{L}(Y)$ be given operators and consider the following statements:
		\begin{description}
			\item[(i)]
			\begin{description}
				\item[(a)]{$B \in \Phi_{r}(Y)$};
				\item[(b)]{($A \in \Phi_{r}(X)),$ or ($\mathcal{R}(A)$ is closed and complemented in $X$ and\\
					$X/{\mathcal{R}(A)} \preceq \mathcal{N}(B)).$ }
			\end{description}	
			\item[(ii)] {$M_{C } \in \Phi_{r}(X \oplus Y ) $ for some $C \in \mathcal{L}(X,Y).$}
			\item[(iii)]
			\begin{description}
				\item[(a)]{$B \in \Phi_{r}(Y)$};
				\item[(b)]{$A \in \Phi_{r}(X)$ or ($\mathcal{R}(A)$ is not closed, or $\mathcal{N}(B) \prec X/ \overline{\mathcal{R}(A)}$ does not hold. }
			\end{description}
		\end{description}
	\end{theorem}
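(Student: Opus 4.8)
What the statement asserts (once the relations among (i), (ii), (iii) are made explicit) is the chain of implications (i) $\Rightarrow$ (ii) $\Rightarrow$ (iii); the reverse implications genuinely fail, and the discrepancy is concentrated in the borderline situations where $\mathcal{R}(A)$ is closed but not complemented. I keep the convention that $M_{C}=\left[\begin{smallmatrix}A&C\\0&B\end{smallmatrix}\right]$ with $C\in\mathcal{L}(Y,X)$, so that $\mathcal{R}(M_{C})=\{(Ax+Cy,\,By):x\in X,\ y\in Y\}$ and $\mathcal{N}(M_{C})=\{(x,y):By=0,\ Ax=-Cy\}$, and recall that $T\in\Phi_{r}$ means $\mathcal{R}(T)$ is closed of finite codimension with $\mathcal{N}(T)$ complemented.

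For (i) $\Rightarrow$ (ii) I would split along the alternative in (i)(b). If $A\in\Phi_{r}(X)$, take $C=0$: then $M_{0}=A\oplus B$, and a finite direct sum of operators in $\Phi_{r}$ is again in $\Phi_{r}$, since ranges, cokernels and null spaces (with their complements) split as direct sums. If instead $\mathcal{R}(A)$ is closed and complemented, write $X=\mathcal{R}(A)\oplus X_{1}$ with $X_{1}\cong X/\mathcal{R}(A)$; the hypothesis $X/\mathcal{R}(A)\preceq\mathcal{N}(B)$ together with the remark after Definition \ref{d380n} yields a right invertible surjection $q:\mathcal{N}(B)\twoheadrightarrow X_{1}$. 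Since $B\in\Phi_{r}(Y)$, its null space is complemented, $Y=\mathcal{N}(B)\oplus Y_{1}$, and $B|_{Y_{1}}:Y_{1}\to\mathcal{R}(B)$ is an isomorphism. Now define $C$ to be $\mathcal{N}(B)\xrightarrow{\,q\,}X_{1}\hookrightarrow X$ on $\mathcal{N}(B)$ and $0$ on $Y_{1}$. For any $v\in\mathcal{R}(B)$ there is a unique $y^{*}\in Y_{1}$ with $By^{*}=v$ and $Cy^{*}=0$, whence $(u,v)\in\mathcal{R}(M_{C})$ as soon as $u\in\mathcal{R}(A)+C(\mathcal{N}(B))=\mathcal{R}(A)+X_{1}=X$; thus $\mathcal{R}(M_{C})=X\oplus\mathcal{R}(B)$ is closed of finite codimension. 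A short computation using the right invertibility of $q$ then exhibits $\mathcal{N}(M_{C})$ as a complemented subspace, so $M_{C}\in\Phi_{r}(X\oplus Y)$.

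For (ii) $\Rightarrow$ (iii) fix $C$ with $M_{C}\in\Phi_{r}(X\oplus Y)$. Projecting $\mathcal{R}(M_{C})$ to the second coordinate gives $\mathcal{R}(B)=\pi_{2}(\mathcal{R}(M_{C}))$ of finite codimension in $Y$, hence closed, and a parallel bookkeeping with $\mathcal{N}(M_{C})$ (using stability of $\Phi_{r}$ under finite rank perturbations) yields $B\in\Phi_{r}(Y)$, i.e.\ (iii)(a). For (iii)(b) I argue by contraposition: assume $A\notin\Phi_{r}(X)$, $\mathcal{R}(A)$ is closed (so $X/\overline{\mathcal{R}(A)}=X/\mathcal{R}(A)$), and $\mathcal{N}(B)\prec X/\mathcal{R}(A)$; I claim no $C$ can place $M_{C}$ in $\Phi_{r}$. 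Indeed the linear map $X\to(X\oplus Y)/\mathcal{R}(M_{C})$, $u\mapsto(u,0)+\mathcal{R}(M_{C})$, has kernel exactly $\mathcal{R}(A)+C(\mathcal{N}(B))$, so $X/(\mathcal{R}(A)+C(\mathcal{N}(B)))$ injects into $(X\oplus Y)/\mathcal{R}(M_{C})$; but $X/(\mathcal{R}(A)+C(\mathcal{N}(B)))\cong(X/\mathcal{R}(A))\big/T(\mathcal{N}(B))$, where $T:\mathcal{N}(B)\to X/\mathcal{R}(A)$ is $C$ restricted to $\mathcal{N}(B)$ followed by the quotient map, and $\mathcal{N}(B)\prec X/\mathcal{R}(A)$ forces this quotient to be infinite dimensional for every such $T$. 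Hence $(X\oplus Y)/\mathcal{R}(M_{C})$ is infinite dimensional and $M_{C}\notin\Phi_{r}(X\oplus Y)$, contradicting the choice of $C$. This establishes (iii)(b) (the disjunct ``$A\in\Phi_{r}(X)$'' being retained only for symmetry with (i)).

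The heart of the matter — and the reason one cannot merely quote Theorem \ref{t3.2} — is the explicit construction of a single $C$ in (i) $\Rightarrow$ (ii) that plugs the cokernel of $A$ through $\mathcal{N}(B)$ while keeping both $\mathcal{R}(M_{C})$ of finite codimension and $\mathcal{N}(M_{C})$ complemented; it is precisely this complementation bookkeeping that forces the hypotheses in (i)(b) to be stronger than the obstruction recorded in (iii)(b), which is the origin of the gap between the two conditions. In the Hilbert $C^{*}$-module setting of the present paper the same scheme is the template, but ``complemented'' must be upgraded to ``orthogonally complemented'' (the range of an adjointable projection), all auxiliary maps — $C$, the surjection $q$, the splitting projections — must be chosen adjointable, and ``finite codimension'' must be replaced by the appropriate finiteness condition for $\mathcal{A}$-modules; verifying that the construction above survives these replacements is the substantive part of the analogues proved below.
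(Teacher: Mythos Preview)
The paper does not prove this theorem; it is quoted from \cite{DDj} as background. What the paper \emph{does} prove are the $C^{*}$-module analogues, Theorem~\ref{t311} for (ii)$\Rightarrow$(iii) and Theorem~\ref{t313} for (i)$\Rightarrow$(ii), and then in the Remarks following each of these it recovers the special case $X=Y=H$ of \cite[Theorem~4.4]{DDj} as a corollary. So the relevant comparison is between your argument and that route.

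Your approach is genuinely different from the paper's. For (ii)$\Rightarrow$(iii) you work directly with ranges and quotients: the key inequality $\dim\bigl((X\oplus Y)/\mathcal{R}(M_{C})\bigr)\ge\dim\bigl(X/(\mathcal{R}(A)+C\mathcal{N}(B))\bigr)$, combined with the definition of $\prec$, finishes the job in a few lines. The paper instead factors $M_{C}^{\mathcal{A}}=D'C'F'$ and, starting from an $\mathcal{M}\Phi_{-}$-decomposition of $M_{C}^{\mathcal{A}}$, builds a chain of compatible block decompositions for $F',C',D'$; the Remark after Theorem~\ref{t311} then argues (in the Hilbert case) that if $\dim\ker D<\infty$ the piece $\tilde N_{1}'$ would map isomorphically into a finitely generated $N_{2}'$, a contradiction. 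Your quotient argument is shorter and works uniformly in Banach spaces; the paper's decomposition argument is heavier but is exactly what is needed in the module setting, where quotients by non-complementable submodules are unavailable. Similarly, for (i)$\Rightarrow$(ii) your construction of $C$ via a right-invertible $q:\mathcal{N}(B)\to X_{1}$ matches in spirit the paper's $C=J P_{\mathrm{Im}\,\iota}$ in Theorem~\ref{t313}, but the paper verifies the semi-Fredholm property by exhibiting a concrete $2\times2$ block decomposition with isomorphic $(1,1)$-corner, rather than computing $\mathcal{R}(M_{C})$ directly.

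One point to tighten: in your (i)$\Rightarrow$(ii), second alternative, you assert that ``a short computation'' shows $\mathcal{N}(M_{C})$ is complemented. With your $C$ one finds $\mathcal{N}(M_{C})=\mathcal{N}(A)\times\ker q$, and while $\ker q$ is complemented in $Y$ (right invertibility of $q$ plus $\mathcal{N}(B)$ complemented), nothing in (i)(b) guarantees $\mathcal{N}(A)$ is complemented in $X$. If your working definition of $\Phi_{r}$ requires kernel complementation, this is a gap in the general Banach setting; it disappears when $X$ is Hilbert (which is the only case the present paper actually recovers). Also, your derivation of $B\in\Phi_{r}(Y)$ from $M_{C}\in\Phi_{r}$ is cleanest via right invertibility modulo compacts (read off the $(2,2)$-entry of $M_{C}S=I+K$), rather than the ``parallel bookkeeping'' you allude to.
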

	Then (i) $\Leftrightarrow$ (ii)  $\Leftrightarrow$ (iii).
	\begin{theorem} \label{t380n}  \cite[Theorem 4.6]{DDj}
		Let $A \in \mathcal{L}(X)$ and $B \in \mathcal{L}(Y)$ be given operators and consider the following statements:
		\begin{description}
			\item[(i)]
			\begin{description}
				\item[(a)]{$A \in \Phi_{l}(Y)$};
				\item[(b)]{($B \in \Phi_{l}(X)),$ or ($\mathcal{R}(B)$ and $\mathcal{N}(B)$  are closed and complemented subspaces of $Y$ and $ \mathcal{N}(B)) \preceq  X/\overline{\mathcal{R}(A)}.$ }
			\end{description}	
			\item[(ii)] {$M_{C } \in \Phi_{l}(X \oplus Y ) $ for some $C \in \mathcal{L}(Y,X).$}
			\item[(iii)]
			\begin{description}
				\item[(a)]{$A \in \Phi_{l}(X)$};
				\item[(b)]{$B \in \Phi_{l}(X),$ or ($\mathcal{R}(B)$ is not closed, or $\mathcal{R}(A)^{\circ} \prec \mathcal{N}(B)$ does not hold. }
			\end{description}
		\end{description}
	\end{theorem}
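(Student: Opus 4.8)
The plan is to establish the chain of equivalences mostly by an explicit construction, with the left/right semi-Fredholm duality and Theorem \ref{t280n} kept alongside as a cross-check. First I would prove (i)$\Rightarrow$(ii) by producing $C$ directly. If $B\in\Phi_{l}(Y)$, then $C=0$ works: a direct sum of left semi-Fredholm operators is left semi-Fredholm, because $\mathcal{N}(M_{0})=\mathcal{N}(A)\oplus\mathcal{N}(B)$ is finite dimensional and $\mathcal{R}(M_{0})=\mathcal{R}(A)\oplus\mathcal{R}(B)$ is closed and complemented in $X\oplus Y$. Otherwise I would use $A\in\Phi_{l}(X)$ to split $X=\mathcal{R}(A)\oplus X_{0}$ with $X_{0}\cong X/\overline{\mathcal{R}(A)}$, use the hypotheses on $B$ to split $Y=\mathcal{N}(B)\oplus Y_{1}$, choose a left invertible $J:\mathcal{N}(B)\to X_{0}$ supplied by $\mathcal{N}(B)\preceq X/\overline{\mathcal{R}(A)}$, and set $C:=J$ on $\mathcal{N}(B)$ and $C:=0$ on $Y_{1}$. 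A short computation then gives $\mathcal{N}(M_{C})=\mathcal{N}(A)\oplus 0\oplus 0$ (finite dimensional) and $\mathcal{R}(M_{C})=\bigl(\mathcal{R}(A)\oplus\mathcal{R}(J)\bigr)\times\mathcal{R}(B)$; the hypotheses that $\mathcal{R}(B)$ and $\mathcal{N}(B)$ be closed and complemented are exactly what guarantees that the latter is a closed complemented subspace of $X\oplus Y$, so $M_{C}\in\Phi_{l}(X\oplus Y)$.

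For the converse direction (from $M_{C}\in\Phi_{l}(X\oplus Y)$ for some $C$) I would first recover (iii)(a): writing $LM_{C}=I+K$ with $K$ of finite rank, restricting to $X\oplus 0$ and projecting onto $X$ gives $(\pi_{X}L\iota_{X})A=I_{X}+(\text{finite rank})$, hence $A\in\Phi_{l}(X)$. The dichotomy in (iii)(b) would then be read off from how the corner $C$ can compensate the behaviour of $A$: since $\pi_{Y}\mathcal{R}(M_{C})=\mathcal{R}(B)$ and $\mathcal{R}(M_{C})\cap(X\oplus 0)=\bigl(\mathcal{R}(A)+C\mathcal{N}(B)\bigr)\oplus 0$, one sees that if $M_{C}\in\Phi_{l}$ while $B\notin\Phi_{l}(Y)$, then either $\mathcal{R}(B)$ fails to be closed or the strict embedding $X/\overline{\mathcal{R}(A)}\prec\mathcal{N}(B)$ fails (here $\mathcal{R}(A)^{\circ}$ is to be read as the quotient $X/\overline{\mathcal{R}(A)}$, equivalently its annihilator). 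Finally, (i)$\Leftrightarrow$(iii) is the formal comparison of the positive formulation (with $\preceq$ and closedness as hypotheses) against the negated one (with $\prec$ and closedness failing), case by case according to whether $\mathcal{R}(A)$ and $\mathcal{R}(B)$ are closed, exactly as in the proof of Theorem \ref{t280n}.

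An alternative, which I would run in parallel for consistency, is to pass to adjoints and invoke Theorem \ref{t280n} directly: $M_{C}\in\Phi_{l}(X\oplus Y)\iff M_{C}^{*}\in\Phi_{r}(X^{*}\oplus Y^{*})$, and writing $M_{C}^{*}$ in block form and interchanging the two summands exhibits it as the upper triangular operator matrix $\left[\begin{smallmatrix} B^{*} & C^{*}\\ 0 & A^{*}\end{smallmatrix}\right]$ on $Y^{*}\oplus X^{*}$; applying Theorem \ref{t280n} to $(B^{*},A^{*})$ in place of $(A,B)$ and translating via $A^{*}\in\Phi_{r}(X^{*})\iff A\in\Phi_{l}(X)$, $\mathcal{R}(B^{*})=\mathcal{N}(B)^{\perp}$ (when $\mathcal{R}(B)$ is closed), $\mathcal{N}(A^{*})=\mathcal{R}(A)^{\perp}\cong\bigl(X/\overline{\mathcal{R}(A)}\bigr)^{*}$, and $Y^{*}/\mathcal{R}(B^{*})\cong\mathcal{N}(B)^{*}$, should reproduce conditions (i) and (iii). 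I expect the hard part to be precisely where the two routes diverge in difficulty: in the constructive route, checking that $\mathcal{R}(M_{C})$ is complemented rather than merely closed, and pinning down the correct strict/negated embedding relation when $\mathcal{R}(A)$ or $\mathcal{R}(B)$ is not closed; in the adjoint route, the non-reflexive case, where one must verify that an embedding living between the dual spaces $\mathcal{N}(B)^{*}$ and $\bigl(X/\overline{\mathcal{R}(A)}\bigr)^{*}$ genuinely descends to one between $\mathcal{N}(B)$ and $X/\overline{\mathcal{R}(A)}$—which is presumably why the hypotheses of (i) explicitly require $\mathcal{R}(B)$ and $\mathcal{N}(B)$ to be complemented.
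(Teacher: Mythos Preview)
The paper does not itself prove this theorem; it is quoted from \cite{DDj} as background, and the paper's own contribution is only to recover the Hilbert-space case $X=Y=H$ as a corollary of its $C^{*}$-module results, via the remarks following Theorem~\ref{t316} (for $(ii)\Rightarrow(iii)$) and Theorem~\ref{t318} (for $(i)\Rightarrow(ii)$). Against that, your constructive $(i)\Rightarrow(ii)$ is exactly the idea behind Theorem~\ref{t318} and its remark: build $C$ from the embedding $J:\mathcal{N}(B)\to X_{0}\cong X/\overline{\mathcal{R}(A)}$ extended by zero on a complement of $\mathcal{N}(B)$. Your $(ii)\Rightarrow(iii)$ via a Calkin-algebra left inverse together with the identity $\mathcal{R}(M_{C})\cap(X\oplus 0)=(\mathcal{R}(A)+C\mathcal{N}(B))\oplus 0$ is more classical than the paper's route through the $\mathcal{M}\Phi_{+}$ decomposition chain $M_{1}\tilde\oplus N_{1}\to R_{1}\tilde\oplus R_{2}\to\cdots$ in Theorem~\ref{t316}; yours stays in standard semi-Fredholm language, while the paper's is tailored so that the same argument runs unchanged over $H_{\mathcal{A}}$. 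Your duality alternative (reduce to Theorem~\ref{t280n} by taking adjoints and swapping the summands) does not appear in the paper at all, and you have already located its real obstacle: in the non-reflexive case an embedding between $\mathcal{N}(B)^{*}$ and $(X/\overline{\mathcal{R}(A)})^{*}$ need not descend to one between $\mathcal{N}(B)$ and $X/\overline{\mathcal{R}(A)}$.

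One genuine technical point your sketch leaves open in the constructive $(i)\Rightarrow(ii)$: for $M_{C}\in\Phi_{l}(X\oplus Y)$ you need $\mathcal{R}(M_{C})=(\mathcal{R}(A)\oplus\mathcal{R}(J))\times\mathcal{R}(B)$ to be complemented, hence $\mathcal{R}(J)$ complemented in $X_{0}$. A left-invertible $J$ is bounded below, so $\mathcal{R}(J)$ is closed, but in a general Banach space that range need not be complemented. You either have to arrange complementability separately (e.g.\ by choosing $J$ together with a bounded projection onto its range), or sidestep the issue by producing a left inverse of $M_{C}$ modulo compacts directly rather than computing $\mathcal{R}(M_{C})$. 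You already flag ``checking that $\mathcal{R}(M_{C})$ is complemented rather than merely closed'' as a hard part, so you see the issue; just be aware that it is a real gap in the Banach-space generality, not merely bookkeeping, and it disappears in the Hilbert-space case the paper actually treats.
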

	Then (i) $\Leftrightarrow$ (ii)  $\Leftrightarrow$ (iii).

	Now, Hilbert $C^{*}$-modules are natural generalization of Hilbert spaces when the field of scalars is replaced by a $C^{*}$-algebra. 
	
	Fredholm theory on Hilbert $C^*$-modules as a generalization of Fredholm theory on Hilbert spaces was started by Mishchenko and Fomenko in \cite{MF}. They have elaborated the notion of a Fredholm operator on the standard module $H_{\mathcal{A}} $ and proved the generalization of the Atkinson theorem. Their definition of $\mathcal{A}$-Fredholm operator on $H_{\mathcal{A}}$ is the following:
	
	\cite[Definition ]{MF} A (bounded $\mathcal{A}$ linear) operator $F: H_{\mathcal{A}} \rightarrow H_{\mathcal{A}}$ is called $\mathcal{A}$-Fredholm if\\
	1) it is adjointable;\\
	2) there exists a decomposition of the domain $H_{\mathcal{A}}= M_{1} \tilde{\oplus} N_{1} ,$ and the range, $H_{\mathcal{A}}= M_{2} \tilde{\oplus} N_{2}$, where $M_{1},M_{2},N_{1},N_{2}$ are closed $\mathcal{A}$-modules and $N_{1},N_{2}$ have a finite number of generators, such that $F$ has the matrix from 
	\begin{center}
		$\left\lbrack
		\begin{array}{ll}
		F_{1} & 0 \\
		0 & F_{4} \\
		\end{array}
		\right \rbrack
		$
	\end{center}
	with respect to these decompositions and $F_{1}:M_{1}\rightarrow M_{2}$ is an isomorphism.\\
	The notation $\tilde{ \oplus} $ denotes the direct sum of modules without orthogonality, as given in \cite{MT}.\\
	\\
	In \cite{I} we vent further in this direction and defined semi-$\mathcal{A}$-Fredholm operators on Hilbert $C^{*}$-modules. We investigated then and proved several properties of these generalized semi Fredholm operators on Hilbert $C^{*}$-modules as an analogue or generalization of the well-known properties of classical semi-Fredholm operators on Hilbert and Banach spaces.\\
	
	In particular we have shown that the class of upper semi-$\mathcal{A}$-Fredholm operators and lower semi-$\mathcal{A}$-Fredholm operators on $H_{\mathcal{A}},$ denoted by $\mathcal{M} \Phi_{+}(H_{\mathcal{A}})$ and\\
	$\mathcal{M} \Phi_{-}(H_{\mathcal{A}}),$ respecively, are exactly those that are one-sided invertible modulo compact operators on $H_{\mathcal{A}}$. Hence they are natural generalizations of the classical left and right semi-Fredholm operators on Hilbert spaces.
	
	The idea in this paper was to use these new classes semi-$\mathcal{A}$-Freholm of operators on $H_{\mathcal{A}}$ and prove that an analogue or a generalized version of \cite[Theorem 3.2]{DDj}, \cite[Theorem 4.4]{DDj}, \cite[Theorem 4.6]{DDj} hold when one considers these new classes of operators. We let $B^{a}(H_{\mathcal{A}})$ denote the set of all bounded, adjointable operators on $H_{\mathcal{A}}$ and we consider $M_{C}^{\mathcal{A}} : H_{\mathcal{A}} \oplus H_{\mathcal{A}} \rightarrow  H_{\mathcal{A}} \oplus H_{\mathcal{A}} $ given as $ 2\times 2 $ operator matrix 
	$M_{C}^{\mathcal{A}}= 
	\left\lbrack
	\begin{array}{ll}
	\mathrm{F}& \mathrm{C} \\
	0 & \mathrm{D} \\
	\end{array}
	\right \rbrack
	,$ where $F,C,D \in B^{a}(H_{\mathcal{A}}).$ Using this set up and these generalized classes of $\mathcal{A}$-Fredholm and semi-$\mathcal{A}$-Fredholm operators on $H_{\mathcal{A}}$ defined in \cite{I}, \cite{MF}, we obtain generalizations of Theorem 3.2, Theorem 4.4 and Theorem 4.6 in \cite{DDj}. Actually, our Theorem 3.2 is a generalization of a result in \cite{HLL}, as the implication in one way in Theorem 3.2 in \cite{DDj} was already proved in \cite{HLL}.
	
	In addition, we show that in the case when $X=Y=H$ where $H$ is a Hilbert space Theorem 4.4 and Theorem 4.6 in \cite{DDj} can be simplified. 
	\\
	
	Let us remind now the definition of the essential spectrum of bounded operators on Banach spaces. Namely, for a bounded operator T on a Banach space, the essential spectrum of T denoted $\sigma_{e}(T)$ is defined to be the set of all $\lambda \in \mathbb{C}$ for which $T-\lambda I$ is not Fredholm.
	\\
	In  \cite{DDj} Djordjevic considers the essential spectra of $A,B,M_{C}$ and he describes the situation when $\sigma_{e}(M_{C})=\sigma_{e}(A) \cup \sigma_{e}(B)$ in a chain of propositions. He shows first in Proposition 3.1 that $\sigma_{e}(M_{C}) \subset \sigma_{e}(A) \cup \sigma_{e}(B) $ in general and then, in Proposition 3.5 he gives sufficient conditions on A and B for the equality to hold. 
	
	Next, passing from Hilbert space to Hilbert $C^{*}$-modules we don't only replace the field of scalars by a $C^{*}$-algebra $\mathcal{A},$ but also work with $Z(\mathcal{A})$ valued spectrum instead of the standard one. Namely, given an $\mathcal{A}$-linear, bounded, adjointable operator $F$ on $H_{\mathcal{A}},$ we consider the operators of the form $F-\alpha 1$ as $\alpha$ varies over $Z(\mathcal{A})$ and this gives rise to a different kind of spectra of $F$ in $Z(\mathcal{A})$ as a generalization of ordinary spectra of $F$ in $\mathbb{C} .$  Using the generalized definitions of Fredholm and semi-Fredholm operators on $H_{\mathcal{A}}$ given in \cite{MF} and \cite{I} together with these new, generalized spectra in $Z(\mathcal{A})$. Finally we give a description of the intersection, when $C$ varies over $B^{a}(H_{\mathcal{A}}),$ of generalized essential spectra in $Z(\mathcal{A}),$ of the operator matrix $M_{C}^{\mathcal{A}}.$ We deduce this description as corollary from our generalizations of Theorem 3.2 in \cite{DDj}. Similar corollaries follows from our generalizations of Theorem 4.4 and Theorem 4.6 in  \cite{DDj}, however in these corollaries we consider generalized left and right Fredholm spectra of $M_{C}^{\mathcal{A}}$ instead of generalized essential spectrum of $M_{C}^{\mathcal{A}}.$
	
	\section{Preliminaries}
	In this section we are going to introduce the notation, the definitions in \cite{I} that are needed in this paper as well as some auxiliary results which are going to be used later in the proofs.	Throughout this paper we let $\mathcal{A}$ be a unital $\mathrm{C}^{*}$-algebra, $H_{\mathcal{A}}$ be the standard module over $\mathcal{A}$ and we let $B^{a}(H_{\mathcal{A}})$ denote the set of all bounded , adjointable operators on $H_{\mathcal{A}}.$ Next, for the $\mathrm{C}^{*}$-algebra $\mathcal{A},$ we let $Z(\mathcal{A})= \lbrace \alpha \in \mathcal{A} \mid \alpha \beta = \beta \alpha $ for all $\beta \in \mathcal{A} \rbrace $ and for $\alpha \in Z(\mathcal{A})$ we let $ \alpha \mathrm{I}$ denote the operator from $H_{\mathcal{A}}$  into $H_{\mathcal{A}}$ given by $\alpha \mathrm{I} (x) $ for all $x \in H_{\mathcal{A}}.$ The operator  $ \alpha \mathrm{I}$ is obviously $\mathcal{A}$-linear since $\alpha \in Z(\mathcal{A}) $ and it is adjointable with its adjoint $ \alpha^{*} \mathrm{I}.$ According to \cite[ Definition 1.4.1] {MT}, we say that a Hilbert $\mathrm{C}^*$-module $M$ over $\mathcal{A}$ is finitely generated if there exists a finite set $ \lbrace x_{i} \rbrace \subseteq M $  such that $M $ equals the linear span (over $\mathrm{C}$ and $\mathcal{A} $) of this set.\\
	\begin{definition}\label{d210n} \cite[Definition 2.1]{I} 
		Let $\mathrm{F} \in B^{a}(H_{\mathcal{A}}).$ 
		We say that $\mathrm{F} $ is an upper semi-{$\mathcal{A}$}-Fredholm operator if there exists a decomposition 
		$$H_{\mathcal{A}} = M_{1} \tilde \oplus N_{1} \stackrel{\mathrm{F}}{\longrightarrow}   M_{2} \tilde \oplus N_{2}= H_{\mathcal{A}}$$
		with respect to which $\mathrm{F}$ has the matrix\\
		\begin{center}
			$\left\lbrack
			\begin{array}{ll}
			\mathrm{F}_{1} & 0 \\
			0 & \mathrm{F}_{4} \\
			\end{array}
			\right \rbrack,
			$
		\end{center}
		where $\mathrm{F}_{1}$ is an isomorphism $M_{1},M_{2},N_{1},N_{2}$ are closed submodules of $H_{\mathcal{A}} $ and $N_{1}$ is finitely generated. Similarly, we say that $\mathrm{F}$ is a lower semi-{$\mathcal{A}$}-Fredholm operator if all the above conditions hold except that in this case we assume that $N_{2}$ ( and not $N_{1}$ ) is finitely generated.	
	\end{definition}
	Set
	\begin{center}
		$\mathcal{M}\Phi_{+}(H_{\mathcal{A}})=\lbrace \mathrm{F} \in B^{a}(H_{\mathcal{A}}) \mid \mathrm{F} $ is upper semi-{$\mathcal{A}$}-Fredholm $\rbrace ,$	
	\end{center}
	\begin{center}
		$\mathcal{M}\Phi_{-}(H_{\mathcal{A}})=\lbrace \mathrm{F} \in B^{a}(H_{\mathcal{A}}) \mid \mathrm{F} $ is lower semi-{$\mathcal{A}$}-Fredholm $\rbrace ,$	
	\end{center}
	$\mathcal{M}\Phi(H_{\mathcal{A}})=\lbrace \mathrm{F} \in B^{a}(H_{\mathcal{A}}) \mid \mathrm{F} $ is $\mathcal{A}$-Fredholm operator on $H_{\mathcal{A}}\rbrace .$
	
	\begin{lemma} \label{l210}   
		Let $M,N,W$ be Hilbert $\mathrm{C}^{*}$-modules over a unital $\mathrm{C}^{*}$-algebra $\mathcal{A}.$ If $\mathrm{F} \in B^{a}(M,N),\mathrm{D} \in B^{a}(N,W)$ and $\mathrm{D}\mathrm{F} \in \mathcal{M} \Phi(M,W)$, then there exists a chain of decompositions
		$$M=M_{2}^{\bot} \oplus M_{2}\stackrel{\mathrm{F}}{\longrightarrow} \mathrm{F}(M_{2}^{\bot}) \oplus R\stackrel{\mathrm{D}}{\longrightarrow}   W_{1} \tilde{\oplus} W_{2} =W$$ 
		w.r.t. which $\mathrm{F},\mathrm{D}$ have the matrices
		$\left\lbrack
		\begin{array}{ll}
		\mathrm{F}_{1} & 0 \\
		0 & \mathrm{F}_{4} \\
		\end{array}
		\right \rbrack
		,$ 
		$\left\lbrack
		\begin{array}{ll}
		\mathrm{D}_{1} & \mathrm{D}_{2} \\
		0 & \mathrm{D}_{4} \\
		\end{array}
		\right \rbrack,
		$
		respectively, where $\mathrm{F}_{1}, \mathrm{D}_{1} $ are isomorphisms, $M_{2}, W_{2}$ are finitely generated, $\mathrm{F}(M_{2}^{\bot}) \oplus R=N$ and in addition  $M=M_{2}^{\bot} \oplus M_{2}\stackrel{\mathrm{D}\mathrm{F}}{\longrightarrow} W_{1} \tilde{\oplus} W_{2} =W$  is an $ \mathcal{M} \Phi$-decomposition for $\mathrm{D}\mathrm{F}$.	
	\end{lemma}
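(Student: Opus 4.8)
The plan is to start from a Mishchenko--Fomenko decomposition of $DF$ and refine it into the required chain by following how $F$ and $D$ act on the individual summands. Since $DF \in \mathcal{M}\Phi(M,W)$, the definition recalled above gives closed submodule decompositions of the domain and the range making $DF$ block-diagonal with an isomorphism in the upper-left corner and finitely generated complementary pieces. Using the standard fact that a finitely generated, topologically complemented Hilbert submodule over a unital $C^{*}$-algebra is orthogonally complemented, one may normalize this so that the finitely generated part $M_{2}$ of the domain is an orthogonal direct summand: $M = M_{2}^{\bot} \oplus M_{2}$, $W = W_{1} \tilde{\oplus} W_{2}$ with $M_{2}, W_{2}$ finitely generated, and $DF$ diagonal with $(DF)_{1} : M_{2}^{\bot} \to W_{1}$ an isomorphism. (The only thing to check here is that after replacing the original complement of $M_{2}$ by $M_{2}^{\bot}$ and replacing $W_{1}$ by $DF(M_{2}^{\bot})$, the block form is preserved; this is routine.)

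Next I would analyze $F$ on $M_{2}^{\bot}$. From $DF|_{M_{2}^{\bot}} = (DF)_{1}$ being bounded below and $\|(DF)_{1}x\| = \|DFx\| \le \|D\|\,\|Fx\|$ it follows that $F_{1} := F|_{M_{2}^{\bot}}$ is bounded below, so $F(M_{2}^{\bot})$ is a closed submodule of $N$ and $F_{1} : M_{2}^{\bot} \to F(M_{2}^{\bot})$ is an isomorphism. To split $F(M_{2}^{\bot})$ off inside $N$, let $\pi : W \to W_{1}$ be the bounded $\mathcal{A}$-linear projection along $W_{2}$; then $\pi D F_{1} = (DF)_{1}$, so $\pi D|_{F(M_{2}^{\bot})} = (DF)_{1}F_{1}^{-1}$ is a (bounded, boundedly invertible) isomorphism onto $W_{1}$, and $E := \bigl(\pi D|_{F(M_{2}^{\bot})}\bigr)^{-1}\pi D$ is a bounded $\mathcal{A}$-linear idempotent on $N$ with range $F(M_{2}^{\bot})$. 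Setting $R := \ker E = \ker(\pi D)$ gives the topological direct sum $N = F(M_{2}^{\bot}) \oplus R$.

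It then remains to read off the matrices. With respect to $M = M_{2}^{\bot} \oplus M_{2}$ and $N = F(M_{2}^{\bot}) \oplus R$: the lower-left entry of $F$ is $0$ since $F(M_{2}^{\bot})$ is exactly the first summand, and the upper-right entry is $0$ because $DF(M_{2}) \subseteq W_{2}$ forces $\pi D F(M_{2}) = 0$, i.e. $F(M_{2}) \subseteq R$; hence $F = \mathrm{diag}(F_{1}, F_{4})$ with $F_{4} = F|_{M_{2}}$. With respect to $N = F(M_{2}^{\bot}) \oplus R$ and $W = W_{1}\tilde{\oplus}W_{2}$: the lower-left entry of $D$ is $0$ since $D(F(M_{2}^{\bot})) = DF(M_{2}^{\bot}) = W_{1}$, so $D = \begin{bmatrix} D_{1} & D_{2} \\ 0 & D_{4}\end{bmatrix}$ with $D_{1} = D|_{F(M_{2}^{\bot})} = \pi D|_{F(M_{2}^{\bot})}$ an isomorphism. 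Finally, multiplying, $DF = \begin{bmatrix} D_{1}F_{1} & D_{2}F_{4} \\ 0 & D_{4}F_{4}\end{bmatrix}$, and $D_{2}F_{4} = 0$ because for $x \in M_{2}$ the element $D F_{4} x = DF(x)$ lies in $W_{2}$ and hence has zero $W_{1}$-component; so $DF = \mathrm{diag}(D_{1}F_{1}, D_{4}F_{4}) = \mathrm{diag}((DF)_{1},(DF)_{4})$ with $D_{1}F_{1}$ an isomorphism and $M_{2}, W_{2}$ finitely generated, which is the claimed $\mathcal{M}\Phi$-decomposition of $DF$.

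I expect the only genuinely delicate point to be the normalization in the first paragraph --- arranging that the finitely generated part of the domain is orthogonally complemented while keeping the range side intact --- together with the bookkeeping that every projection, idempotent and inverse produced along the way is bounded and $\mathcal{A}$-linear. Once the $DF$-decomposition is in the normalized diagonal form, every vanishing off-diagonal block above is an immediate consequence of the relations $DF(M_{2}^{\bot}) = W_{1}$ and $DF(M_{2}) \subseteq W_{2}$.
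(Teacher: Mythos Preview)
Your proof is correct and follows essentially the same route as the paper's own argument. The paper's proof is only two sentences: it invokes the proof of \cite[Theorem~2.7.6]{MT} to obtain the normalized $\mathcal{M}\Phi$-decomposition $M=M_{2}^{\bot}\oplus M_{2}\to W_{1}\tilde{\oplus}W_{2}$ for $DF$ (your first paragraph), and then says ``proceed as in the proof of \cite[Theorem~2.2]{I}, part $2)\Rightarrow 1)$'' to get the intermediate decomposition of $N$ and the block forms (your second and third paragraphs). You have simply unpacked those citations explicitly---showing $F|_{M_{2}^{\bot}}$ is bounded below, building the idempotent $E=(\pi D|_{F(M_{2}^{\bot})})^{-1}\pi D$ to split off $F(M_{2}^{\bot})$, and reading off the block matrices from $DF(M_{2}^{\bot})=W_{1}$ and $DF(M_{2})\subseteq W_{2}$.

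One minor remark: the paper's version of the argument (visible in the proof of Theorem~\ref{t311}) seems to complement $F(M_{2}^{\bot})$ \emph{orthogonally} inside the preimage $D^{-1}(W_{1})$ using \cite[Theorem~2.3.3]{MT}, rather than via your idempotent $E$ on all of $N$. Your construction is slightly more direct and avoids worrying about whether $D^{-1}(W_{1})$ is itself complemented; both produce a topological direct sum $N=F(M_{2}^{\bot})\tilde{\oplus}R$ with the required matrix properties, which is all the lemma needs (the ``$\oplus$'' without tilde in the lemma statement appears to be a typographical slip, as the lemma is applied with $\tilde{\oplus}$ throughout the paper).
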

	\begin{proof}
		By the proof of \cite[Theorem 2.7.6 ]{MT} applied to the operator $$\mathrm{D}\mathrm{F} \in  \mathcal{M} \Phi (M,W),$$ there exists an $\mathcal{M} \Phi$-decomposition $$M=M_{2}^{\bot} \oplus M_{2}\stackrel{\mathrm{D}\mathrm{F}}{\longrightarrow} W_{1} \tilde{\oplus} W_{2} =W$$ for $\mathrm{D}\mathrm{F}.$ This is because the proof of \cite[Theorem 2.7.6 ]{MT} also holds when we consider arbitrary Hilbert $\mathrm{C}^{*}$-modules $M$ and $W$ over unital $\mathrm{C}^{*}$-algebra $\mathcal{A}$ and not only the standard module $ H_{\mathcal{A}} $. Then we can proceed as in the proof of Theorem 2.2  \cite{I}, part $2) \Rightarrow 1).$	
	\end{proof}
	\begin{lemma} \label{l220}  
		If  $\mathrm{D} \in  \mathcal{M} \Phi_{-} (H_{\mathcal{A}})$, then there exists an $ \mathcal{M} \Phi_{-}$-decompo- sition $ H_{\mathcal{A}} = {N_{1}^{\prime}}^{\perp} \oplus {N_{1}^{\prime}}\stackrel{\mathrm{D}}{\longrightarrow} M_{2} \oplus {N_{2}^{\prime}} = H_{\mathcal{A}} $ for $\mathrm{D}.$ Similarly, if $\mathrm{F} \in  \mathcal{M} \Phi_{+} (H_{\mathcal{A}}),$ then there exists an $\mathcal{M} \Phi_{+}$-decomposition $H_{\mathcal{A}} = M_{2}^{\bot} \oplus N_{1}\stackrel{\mathrm{F}}{\longrightarrow} N_{2}^{\perp} \oplus N_{2}=H_{\mathcal{A}}$ for $\mathrm{F}.$
	\end{lemma}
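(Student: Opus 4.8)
The statement splits into two dual halves, so I would prove it for $D \in \mathcal{M}\Phi_{-}(H_{\mathcal{A}})$ and then invoke duality for $F \in \mathcal{M}\Phi_{+}(H_{\mathcal{A}})$ (or the fact that $T\mapsto T^{*}$ interchanges $\mathcal{M}\Phi_{+}$ and $\mathcal{M}\Phi_{-}$, the $\mathcal{A}$-compacts being a $*$-ideal). Fix $D\in\mathcal{M}\Phi_{-}(H_{\mathcal{A}})$. The first part of the argument reduces $D$ to a diagonal matrix over \emph{topological} splittings: by \cite{I}, $D$ is right invertible modulo the $\mathcal{A}$-compacts, so $DG=I+K$ with $K$ an $\mathcal{A}$-compact, and $DG\in\mathcal{M}\Phi(H_{\mathcal{A}})$ since $I+K$ is $\mathcal{A}$-Fredholm. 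Applying Lemma \ref{l210} to $DG$ (with $G$ as first and $D$ as second factor) gives splittings $H_{\mathcal{A}}=P\oplus R$ of the domain of $D$ and $H_{\mathcal{A}}=W_{1}\,\tilde{\oplus}\,W_{2}$ of its range, with $W_{2}$ finitely generated, relative to which $D=\left[\begin{smallmatrix}D_{1}&D_{2}\\0&D_{4}\end{smallmatrix}\right]$ and $D_{1}:P\to W_{1}$ is an isomorphism; conjugating by the invertible element $\left[\begin{smallmatrix}I&-D_{1}^{-1}D_{2}\\0&I\end{smallmatrix}\right]$ of $B^{a}(P\oplus R)$ removes $D_{2}$, so after relabelling $R$ we may take $D=\left[\begin{smallmatrix}D_{1}&0\\0&D_{4}\end{smallmatrix}\right]$, still with $D_{1}:P\to W_{1}$ an isomorphism and $W_{2}$ finitely generated.

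The heart of the proof is to replace this topological domain splitting by an orthogonal one. Since $W_{2}$ is finitely generated and a topological direct summand, it is orthogonally complemented (cf.\ \cite{MT}), $H_{\mathcal{A}}=W_{2}^{\bot}\oplus W_{2}$; let $\pi\in B^{a}(H_{\mathcal{A}})$ be the orthogonal projection onto $W_{2}^{\bot}$. Because $\ker D\subseteq R$ (as $D_{1}$ is an isomorphism), $D$ maps $P$ onto $W_{1}$, and $\pi$ is injective on $W_{1}$ with $\pi(W_{1})=W_{2}^{\bot}$, the operator $\pi D$ is adjointable with $\ker(\pi D)=R$ and closed range $W_{2}^{\bot}$; hence, $\pi D$ having closed range, $R=\ker(\pi D)$ is orthogonally complemented, $H_{\mathcal{A}}=R^{\bot}\oplus R$. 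Realizing $R^{\bot}$ as the graph of a bounded module map $P\to R$ and using that $\pi D|_{P}$ is bounded below, one checks that $D|_{R^{\bot}}$ is an isomorphism onto a closed submodule $M_{2}$, that $H_{\mathcal{A}}=M_{2}\,\tilde{\oplus}\,W_{2}$, and that $D$ is diagonal with respect to $H_{\mathcal{A}}=R^{\bot}\oplus R\stackrel{D}{\longrightarrow}M_{2}\oplus W_{2}=H_{\mathcal{A}}$. Setting $N_{1}':=R$ and $N_{2}':=W_{2}$ gives the required $\mathcal{M}\Phi_{-}$-decomposition. (One further adjustment, again using that $W_{2}$ is finitely generated, makes the range splitting orthogonal too, should that be wanted.)

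For $F\in\mathcal{M}\Phi_{+}(H_{\mathcal{A}})$ one argues dually: from a left inverse modulo $\mathcal{A}$-compacts, $GF=I+K\in\mathcal{M}\Phi(H_{\mathcal{A}})$, and Lemma \ref{l210} (now with $F$ as first factor) hands $F$ a diagonal matrix \emph{already} over an orthogonal domain splitting $H_{\mathcal{A}}=M_{2}^{\bot}\oplus M_{2}$ with $M_{2}$ finitely generated; one then replaces the range splitting by $H_{\mathcal{A}}=F(M_{2}^{\bot})\oplus F(M_{2}^{\bot})^{\bot}$ — legitimate because $F(M_{2}^{\bot})$ is the closed range of the adjointable operator $F$ composed with the orthogonal projection onto $M_{2}^{\bot}$ — and adjusts the second domain summand to $F^{-1}\!\left(F(M_{2}^{\bot})^{\bot}\right)$, which is finitely generated because it is isomorphic to $M_{2}$. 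Writing $N_{2}:=F(M_{2}^{\bot})^{\bot}$ produces the $\mathcal{M}\Phi_{+}$-decomposition $H_{\mathcal{A}}=M_{2}^{\bot}\oplus N_{1}\stackrel{F}{\longrightarrow}N_{2}^{\bot}\oplus N_{2}=H_{\mathcal{A}}$.

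I expect the delicate point, in both halves, to be exactly this passage from topological to orthogonal complementation in the middle step: over a Hilbert space it is vacuous, and in the $C^{*}$-module setting it works only because the summand one straightens around is finitely generated, which is what makes the relevant orthogonal projection adjointable and allows the closed-range criterion for adjointable operators to be applied. Everything else — the use of Lemma \ref{l210}, the clearing of the off-diagonal block, and the graph/bounded-below bookkeeping — is routine.
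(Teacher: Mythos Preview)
Your argument is correct and complete; the only point worth flagging is that the paper's own ``proof'' is a bare one-line citation --- ``Follows from the proofs of Theorem~2.2 \cite{I} and Theorem~2.3 \cite{I}, part $1)\Rightarrow 2)$'' --- so the actual construction lives entirely in \cite{I}.  Your route differs from what that citation encodes in one structural respect: the proofs in \cite{I}, part $1)\Rightarrow 2)$, start from an \emph{arbitrary} $\mathcal{M}\Phi_{\pm}$-decomposition (the defining data) and, in the course of building a one-sided Calkin inverse, straighten it into one with the required orthogonal summand.  You instead invoke the \emph{conclusion} of those theorems (one-sided invertibility modulo $\mathcal{A}$-compacts), feed the resulting $\mathcal{A}$-Fredholm product into Lemma~\ref{l210}, and then run your own orthogonalisation via the closed-range criterion for the adjointable operator $\pi D$ (resp.\ $FP_{M_2^{\bot}}$).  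Both arguments hinge on the same mechanism --- orthogonal complementability of kernels/ranges of adjointable operators with closed range, enabled by the finitely-generated summand --- so the difference is one of packaging rather than substance.  What your version buys is self-containment relative to this paper: a reader who has Lemma~\ref{l210} and only the \emph{statements} of \cite[Theorems 2.2, 2.3]{I} can follow it without opening \cite{I}; the paper's version is shorter but opaque without that reference in hand.
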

	\begin{proof}
		Follows from the proofs of Theorem 2.2  \cite{I} and  Theorem 2.3 \cite{I}, part $1) \Rightarrow 2)  .$
	\end{proof}
	
	\begin{definition} \label{d280n}  \cite[Definition 5.1]{I}
		Let $\mathrm{F} \in \mathcal{M}\Phi (H_{\mathcal{A}})$. 
		We say that $\mathrm{F} \in \tilde {\mathcal{M}} \Phi_{+}^{-} (H_{\mathcal{A}})$ if there exists a decomposition 
		$$H_{\mathcal{A}} = M_{1} \tilde \oplus {N_{1}}\stackrel{\mathrm{F}}{\longrightarrow} M_{2} \tilde \oplus N_{2}= H_{\mathcal{A}} $$
		with respect to which $\mathrm{F}$ has the matrix
		\begin{center}
			$\left\lbrack
			\begin{array}{ll}
			\mathrm{F}_{1} & 0 \\
			0 & \mathrm{F}_{4} \\
			\end{array}
			\right \rbrack,
			$
		\end{center}
		where $\mathrm{F}_{1}$ is an isomorphism, $N_{1},N_{2}$ are closed, finitely generated and $N_{1} \preceq N_{2},$ that is $N_{1}$ is isomorphic to a closed submodule of $N_{2}$. We define similarly the class $\tilde {\mathcal{M}}\Phi_{-}^{+} (H_{\mathcal{A}})$, the only difference in this case is that $N_{2} \preceq N_{1}$. Then we set
		$$\mathcal{M}\Phi_{+}^{-} (H_{\mathcal{A}})= (\tilde {\mathcal{M}} \Phi_{+}^{-} (H_{\mathcal{A}})) \cup (\mathcal{M}\Phi_{+} (H_{\mathcal{A}}) \setminus \mathcal{M}\Phi (H_{\mathcal{A}}))$$ 
		and
		$$\mathcal{M}\Phi_{-}^{+} (H_{\mathcal{A}})= (\tilde {\mathcal{M}}\Phi_{-}^{+} (H_{\mathcal{A}})) \cup (\mathcal{M}\Phi_{-} (H_{\mathcal{A}}) \setminus \mathcal{M}\Phi (H_{\mathcal{A}}))$$
	\end{definition}
	
	\section{Perturbations of spectra in $Z(\mathcal{A})$ of operator matrices acting on $H_{\mathcal{A}} \oplus H_{\mathcal{A}} $}
	It this section we will consider the operator ${\mathbf{M}_{\mathrm{C}}^{\mathcal{A}}}(\mathrm{F},\mathrm{D}):H_{\mathcal{A}} \oplus H_{\mathcal{A}} \rightarrow H_{\mathcal{A}}  \oplus H_{\mathcal{A}}$ given as $2 \times 2$ operator matrix \begin{center}
		$\left\lbrack
		\begin{array}{ll}
		\mathrm{F} & \mathrm{C} \\
		0 & \mathrm{D} \\
		\end{array}
		\right \rbrack
		,$
	\end{center} where $\mathrm{C} \in B^{a}  (H_{\mathcal{A}})$.\\
	To simplify notation, throughout this paper, we will only write $ {\mathbf{M}_{\mathrm{C}}^{\mathcal{A}}}$ instead of ${\mathbf{M}_{\mathrm{C}}^{\mathcal{A}}} (\mathrm{F},\mathrm{D})$ when $\mathrm{F},\mathrm{D} \in B^{a}  (H_{\mathcal{A}})$ are given.\\
	Let $\sigma_{e}^{\mathcal{A}}({\mathbf{M}_{\mathrm{C}}^{\mathcal{A}}})=\lbrace \alpha \in Z(\mathcal{A}) | {\mathbf{M}_{\mathrm{C}}^{\mathcal{A}}} - \alpha \mathrm{I}$ is not $\mathcal{A}$-Fredholm $\rbrace$.
	Then we have the following proposition. \\
	\begin{proposition} \label{p310}
		For given $\mathrm{F},\mathrm{C},\mathrm{D} \in B^{a}  (H_{\mathcal{A}})$, one has $$\sigma_{e}^{\mathcal{A}}({\mathbf{M}_{\mathrm{C}}^{\mathcal{A}}}) \subset (\sigma_{e}^{\mathcal{A}}(\mathrm{F}) \cup \sigma_{e}^{\mathcal{A}}(\mathrm{D})) .$$	
	\end{proposition}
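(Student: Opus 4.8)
The plan is to prove the equivalent inclusion between complements: if $\alpha \in Z(\mathcal{A})$ belongs to neither $\sigma_{e}^{\mathcal{A}}(\mathrm{F})$ nor $\sigma_{e}^{\mathcal{A}}(\mathrm{D})$, then $\mathbf{M}_{\mathrm{C}}^{\mathcal{A}} - \alpha\mathrm{I}$ is $\mathcal{A}$-Fredholm on $H_{\mathcal{A}} \oplus H_{\mathcal{A}}$. Since $\alpha\mathrm{I}$ on $H_{\mathcal{A}} \oplus H_{\mathcal{A}}$ is $\mathrm{diag}(\alpha\mathrm{I},\alpha\mathrm{I})$, one has
\[
\mathbf{M}_{\mathrm{C}}^{\mathcal{A}} - \alpha\mathrm{I} = \begin{bmatrix} \mathrm{F}-\alpha\mathrm{I} & \mathrm{C} \\ 0 & \mathrm{D}-\alpha\mathrm{I} \end{bmatrix},
\]
which is again an operator matrix of the same upper triangular form, and by hypothesis its diagonal entries $\mathrm{F}-\alpha\mathrm{I},\mathrm{D}-\alpha\mathrm{I}$ lie in $\mathcal{M}\Phi(H_{\mathcal{A}})$. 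It therefore suffices to establish the following \emph{auxiliary fact}: for all $\mathrm{F}_{0},\mathrm{D}_{0}\in\mathcal{M}\Phi(H_{\mathcal{A}})$ and all $\mathrm{C}\in B^{a}(H_{\mathcal{A}})$, the operator matrix $\left[\begin{smallmatrix} \mathrm{F}_{0} & \mathrm{C} \\ 0 & \mathrm{D}_{0} \end{smallmatrix}\right]$ belongs to $\mathcal{M}\Phi(H_{\mathcal{A}}\oplus H_{\mathcal{A}})$.

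For the auxiliary fact I would use the factorization
\[
\begin{bmatrix} \mathrm{F}_{0} & \mathrm{C} \\ 0 & \mathrm{D}_{0} \end{bmatrix} = \begin{bmatrix} \mathrm{I} & 0 \\ 0 & \mathrm{D}_{0} \end{bmatrix}\begin{bmatrix} \mathrm{I} & \mathrm{C} \\ 0 & \mathrm{I} \end{bmatrix}\begin{bmatrix} \mathrm{F}_{0} & 0 \\ 0 & \mathrm{I} \end{bmatrix}
\]
and show that each of the three factors is $\mathcal{A}$-Fredholm on $H_{\mathcal{A}}\oplus H_{\mathcal{A}}$; the conclusion then follows since the class of $\mathcal{A}$-Fredholm operators is closed under composition (a consequence of the generalized Atkinson theorem of \cite{MF}). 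The middle factor is invertible, with inverse $\left[\begin{smallmatrix} \mathrm{I} & -\mathrm{C} \\ 0 & \mathrm{I} \end{smallmatrix}\right]$, hence $\mathcal{A}$-Fredholm. Each outer factor is a diagonal operator matrix $\mathrm{diag}(\mathrm{G}_{1},\mathrm{G}_{2})$ whose two entries lie in $\mathcal{M}\Phi(H_{\mathcal{A}})$ (namely $\mathrm{F}_{0}$ and $\mathrm{I}$, respectively $\mathrm{I}$ and $\mathrm{D}_{0}$, the identity being $\mathcal{A}$-Fredholm via the trivial decomposition with zero complementary summands). Picking Mishchenko--Fomenko decompositions $H_{\mathcal{A}} = P_{i}\tilde{\oplus}Q_{i} \stackrel{\mathrm{G}_{i}}{\longrightarrow} R_{i}\tilde{\oplus}S_{i} = H_{\mathcal{A}}$ with $\mathrm{G}_{i}|_{P_{i}}$ an isomorphism and $Q_{i},S_{i}$ finitely generated, the decomposition
\[
H_{\mathcal{A}}\oplus H_{\mathcal{A}} = (P_{1}\oplus P_{2})\,\tilde{\oplus}\,(Q_{1}\oplus Q_{2}) \stackrel{\mathrm{G}_{1}\oplus\mathrm{G}_{2}}{\longrightarrow} (R_{1}\oplus R_{2})\,\tilde{\oplus}\,(S_{1}\oplus S_{2}) = H_{\mathcal{A}}\oplus H_{\mathcal{A}}
\]
exhibits $\mathrm{diag}(\mathrm{G}_{1},\mathrm{G}_{2})$ as $\mathcal{A}$-Fredholm, because $Q_{1}\oplus Q_{2}$ and $S_{1}\oplus S_{2}$ are again finitely generated and $(\mathrm{G}_{1}\oplus\mathrm{G}_{2})|_{P_{1}\oplus P_{2}} = \mathrm{G}_{1}|_{P_{1}}\oplus\mathrm{G}_{2}|_{P_{2}}$ is an isomorphism onto $R_{1}\oplus R_{2}$. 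Applying the auxiliary fact to $\mathrm{F}_{0}=\mathrm{F}-\alpha\mathrm{I}$, $\mathrm{D}_{0}=\mathrm{D}-\alpha\mathrm{I}$ then yields $\alpha\notin\sigma_{e}^{\mathcal{A}}(\mathbf{M}_{\mathrm{C}}^{\mathcal{A}})$, proving the inclusion.

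I do not expect a genuine obstacle here; the step that needs the most care is the bookkeeping for the diagonal factors (checking that the two $\tilde{\oplus}$-decompositions written above are honest, non-orthogonal direct sum decompositions of $H_{\mathcal{A}}\oplus H_{\mathcal{A}}$ with finitely generated complementary summands) together with the invocation of the closedness of $\mathcal{M}\Phi$ under composition. As an even shorter route one can argue directly via the generalized Atkinson theorem: if $\mathrm{G},\mathrm{E}\in B^{a}(H_{\mathcal{A}})$ invert $\mathrm{F}-\alpha\mathrm{I},\mathrm{D}-\alpha\mathrm{I}$ modulo compact operators on $H_{\mathcal{A}}$, then $\left[\begin{smallmatrix} \mathrm{G} & -\mathrm{G}\mathrm{C}\mathrm{E} \\ 0 & \mathrm{E} \end{smallmatrix}\right]$ is a two-sided inverse of $\mathbf{M}_{\mathrm{C}}^{\mathcal{A}}-\alpha\mathrm{I}$ modulo compact operators on $H_{\mathcal{A}}\oplus H_{\mathcal{A}}$, the two off-diagonal error terms being compact because the compacts form a two-sided ideal; hence $\mathbf{M}_{\mathrm{C}}^{\mathcal{A}}-\alpha\mathrm{I}$ is $\mathcal{A}$-Fredholm by \cite{MF}.
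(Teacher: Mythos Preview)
Your proof is correct and follows essentially the same approach as the paper: both argue via the same three-factor decomposition of $\mathbf{M}_{\mathrm{C}}^{\mathcal{A}}-\alpha\mathrm{I}$, observe that the middle factor is invertible, and conclude by closedness of $\mathcal{M}\Phi$ under composition. You supply more detail than the paper does on why the diagonal outer factors are $\mathcal{A}$-Fredholm (the paper simply asserts this as ``clearly''), and your alternative Atkinson-style argument with the explicit parametrix $\left[\begin{smallmatrix} \mathrm{G} & -\mathrm{G}\mathrm{C}\mathrm{E} \\ 0 & \mathrm{E} \end{smallmatrix}\right]$ is a nice bonus not present in the paper.
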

	\begin{proof}
		Observe first that 
		\begin{center}
			${\mathbf{M}_{\mathrm{C}}^{\mathcal{A}}} - \alpha \mathrm{I}=
			\left\lbrack
			\begin{array}{ll}
			1 & 0 \\
			0 & \mathrm{D}-\alpha 1 \\
			\end{array}
			\right \rbrack
			$
			$
			\left\lbrack
			\begin{array}{ll}
			1 & \mathrm{C} \\
			0 & 1 \\
			\end{array}
			\right \rbrack
			$
			$
			\left\lbrack
			\begin{array}{ll}
			\mathrm{F}- \alpha 1 & 0 \\
			0 & 1 \\
			\end{array}
			\right \rbrack .
			$	  
		\end{center} 
		Now 
		$
		\left\lbrack
		\begin{array}{ll}
		1 & \mathrm{C} \\
		0 & 1 \\
		\end{array}
		\right \rbrack
		$ 
		is clearly invertible in $ B^{a}(H_{\mathcal{A}} \oplus H_{\mathcal{A}}) $ with inverse
		$
		\left\lbrack
		\begin{array}{ll}
		1 & -\mathrm{C} \\
		0 & 1 \\
		\end{array}
		\right \rbrack
		,$ 
		so it follows that
		$
		\left\lbrack
		\begin{array}{ll}
		1 & \mathrm{C} \\
		0 & 1 \\
		\end{array}
		\right \rbrack
		$  
		is $\mathcal{A}$-Fredholm. If, in addition both
		$\left\lbrack
		\begin{array}{ll}
		\mathrm{F}- \alpha 1 & 0 \\
		0 & 1 \\
		\end{array}
		\right \rbrack 
		$
		and
		$\left\lbrack
		\begin{array}{ll}
		1 & 0 \\
		0 & \mathrm{D}-\alpha 1 \\
		\end{array}
		\right \rbrack
		$
		are $\mathcal{A}$-Fredholm, then ${\mathbf{M}_{\mathrm{C}}^{\mathcal{A}}} - \alpha \mathrm{I}$ is $\mathcal{A}$-Fredholm being a composition of  $\mathcal{A}$-Fredholm operators. But, if $\mathrm{F} - \alpha \mathrm{I}$ is $\mathcal{A}$-Fredholm, then clearly 
		$ \left\lbrack
		\begin{array}{ll}
		\mathrm{F}- \alpha 1 & 0 \\
		0 & 1 \\
		\end{array}
		\right \rbrack  $ is $\mathcal{A}$-Fredholm, and similarly if $ \mathrm{D}- \alpha \mathrm{I}  $ is $\mathcal{A}$-Fredholm, then\\
		$ 
		\left\lbrack
		\begin{array}{ll}
		1 & 0 \\
		0 & \mathrm{D}-\alpha 1 \\
		\end{array}
		\right \rbrack  
		$
		is $\mathcal{A}$-Fredholm. Thus, if both $ \mathrm{F}-\alpha \mathrm{I}   $ and $ \mathrm{D}-\alpha \mathrm{I}  $ are $\mathcal{A}$-Fredholm, then $ {\mathbf{M}_{\mathrm{C}}^{\mathcal{A}}} - \alpha \mathrm{I}  $ is $\mathcal{A}$-Fredholm. The proposition follows.	
	\end{proof}
	%
	%
	This proposition just gives an inclusion. We are going to investigate in which cases the equality holds. To this end we introduce first the following theorem.
	\begin{theorem} \label{t320} 
		Let $ \mathrm{F},\mathrm{D} \in B^{a}  (H_{\mathcal{A}}) .$ If $ {\mathbf{M}_{\mathrm{C}}^{\mathcal{A}}} \in \mathcal{M} \Phi (H_{\mathcal{A}} \oplus H_{\mathcal{A}})   $ for some\\
		$\mathrm{C}\in B^{a}(H_{\mathcal{A}}),$
		then $ \mathrm{F} \in \mathcal{M} \Phi_{+} (H_{\mathcal{A}} ),  \mathrm{D} \in \mathcal{M} \Phi_{-} (H_{\mathcal{A}} )  $ and for all decompositions
		$$H_{\mathcal{A}} = M_{1} \tilde \oplus {N_{1}}\stackrel{\mathrm{F}}{\longrightarrow} M_{2} \tilde \oplus N_{2}= H_{\mathcal{A}} ,$$ 
		$$H_{\mathcal{A}} = M_{1}^{\prime} \tilde \oplus {N_{1}^{\prime}}\stackrel{\mathrm{D}}{\longrightarrow} M_{2}^{\prime} \tilde \oplus N_{2}^{\prime}= H_{\mathcal{A}} $$
		w.r.t. which $\mathrm{F}, \mathrm{D}$ have matrices 
		$\left\lbrack
		\begin{array}{ll}
		\mathrm{F}_{1} & 0 \\
		0 & \mathrm{F}_{4} \\
		\end{array}
		\right \rbrack
		,$  
		$\left\lbrack
		\begin{array}{ll}
		\mathrm{D}_{1} & 0 \\
		0 & \mathrm{D}_{4} \\
		\end{array}
		\right \rbrack  ,$
		respectively, where $ \mathrm{F}_{1},\mathrm{D}_{1} $ are isomorphisms, and $N_{1},N_{2}^{\prime}   $ are finitely generated, there exist closed submodules\\
		$ \tilde {N_{1}^{\prime}}, \tilde{\tilde{N_{1}^{\prime}}}, \tilde {N_{2}},\tilde{\tilde{N_{2}}}  $ such that  $N_{2}  \cong \tilde{N_{2}}, N_{1}^{\prime} \cong \tilde{N_{1}^{\prime}} $, $ \tilde{\tilde{N_{2}}}  $ and $  \tilde{\tilde{N_{1}^{\prime}}}  $ are finitely generated  and 
		$$\tilde {N_{2}} \tilde{\oplus} \tilde{\tilde{N_{2}}}  \cong \tilde {N_{1}^{\prime}} \tilde{\oplus} \tilde{\tilde{N_{1}^{\prime}}}.$$ 
	\end{theorem}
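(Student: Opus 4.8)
The plan is to establish the conclusion in three steps: first that $\mathrm{F}\in\mathcal{M}\Phi_{+}(H_{\mathcal{A}})$ and $\mathrm{D}\in\mathcal{M}\Phi_{-}(H_{\mathcal{A}})$ (so that the decompositions in the statement exist at all), then to conjugate $\mathbf{M}_{\mathrm{C}}^{\mathcal{A}}$ by invertibles so as to strip off the two isomorphisms $\mathrm{F}_{1},\mathrm{D}_{1}$ and be left with a $2\times 2$ corner on the ``small'' summands, and finally to read the required stable isomorphism off the $\mathcal{A}$-Fredholmness of that corner. For the first step, since $\mathbf{M}_{\mathrm{C}}^{\mathcal{A}}$ is $\mathcal{A}$-Fredholm the Mishchenko--Fomenko version of Atkinson's theorem gives $\mathrm{G},\mathrm{G}'\in B^{a}(H_{\mathcal{A}}\oplus H_{\mathcal{A}})$ and compact operators $\mathrm{K},\mathrm{K}'$ with $\mathrm{G}\,\mathbf{M}_{\mathrm{C}}^{\mathcal{A}}=I+\mathrm{K}$ and $\mathbf{M}_{\mathrm{C}}^{\mathcal{A}}\,\mathrm{G}'=I+\mathrm{K}'$. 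Writing $\mathrm{G},\mathrm{G}'$ as $2\times 2$ operator matrices and reading off the $(1,1)$-entry of the first identity and the $(2,2)$-entry of the second yields $\mathrm{G}_{11}\mathrm{F}=1+\mathrm{K}_{11}$ and $\mathrm{D}\mathrm{G}'_{22}=1+\mathrm{K}'_{22}$ with $\mathrm{K}_{11},\mathrm{K}'_{22}$ compact on $H_{\mathcal{A}}$; thus $\mathrm{F}$ is left and $\mathrm{D}$ right invertible modulo compacts, which by the characterisation of $\mathcal{M}\Phi_{+},\mathcal{M}\Phi_{-}$ from \cite{I} recalled in the introduction is precisely $\mathrm{F}\in\mathcal{M}\Phi_{+}(H_{\mathcal{A}})$ and $\mathrm{D}\in\mathcal{M}\Phi_{-}(H_{\mathcal{A}})$. (Alternatively one may factor $\mathbf{M}_{\mathrm{C}}^{\mathcal{A}}$ as in the proof of Proposition \ref{p310} with $\alpha=0$ and apply Lemma \ref{l210} to this product and to its adjoint.)

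For the second step I would fix decompositions of $\mathrm{F},\mathrm{D}$ as in the statement and pass to the induced block form of $\mathbf{M}_{\mathrm{C}}^{\mathcal{A}}$, writing the domain as $M_{1}\tilde{\oplus}N_{1}\tilde{\oplus}M_{1}'\tilde{\oplus}N_{1}'$ and the range as $M_{2}\tilde{\oplus}N_{2}\tilde{\oplus}M_{2}'\tilde{\oplus}N_{2}'$. In these coordinates $\mathbf{M}_{\mathrm{C}}^{\mathcal{A}}$ is a $4\times 4$ operator matrix with diagonal $\mathrm{F}_{1},\mathrm{F}_{4},\mathrm{D}_{1},\mathrm{D}_{4}$, with the block below $\mathrm{F}$ equal to $0$, and with the remaining four blocks coming from $\mathrm{C}:M_{1}'\tilde{\oplus}N_{1}'\to M_{2}\tilde{\oplus}N_{2}$. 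Since $\mathrm{F}_{1}:M_{1}\to M_{2}$ and $\mathrm{D}_{1}:M_{1}'\to M_{2}'$ are isomorphisms, hence adjointably invertible, one can annihilate the two $\mathrm{C}$-blocks lying in the block row of $\mathrm{F}_{1}$ by right multiplication with an invertible operator on $H_{\mathcal{A}}\oplus H_{\mathcal{A}}$, and then the $\mathrm{C}$-block lying in the block column of $\mathrm{D}_{1}$ by left multiplication with an invertible operator; after permuting summands this exhibits $\mathbf{M}_{\mathrm{C}}^{\mathcal{A}}$, up to left and right multiplication by invertible adjointable operators, as $(\mathrm{F}_{1}\oplus\mathrm{D}_{1})\oplus\mathrm{T}$ with $\mathrm{T}=\left\lbrack\begin{array}{ll}\mathrm{F}_{4}&\mathrm{C}_{0}\\0&\mathrm{D}_{4}\end{array}\right\rbrack\in B^{a}(N_{1}\tilde{\oplus}N_{1}',\,N_{2}\tilde{\oplus}N_{2}')$ for a suitable $\mathrm{C}_{0}\in B^{a}(N_{1}',N_{2})$. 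As $\mathrm{F}_{1}\oplus\mathrm{D}_{1}$ is an isomorphism and $\mathbf{M}_{\mathrm{C}}^{\mathcal{A}}$ is $\mathcal{A}$-Fredholm, the same ``corner of a parametrix'' argument as in the first step (now using Atkinson's theorem for adjointable operators between arbitrary Hilbert $\mathrm{C}^{*}$-modules) shows that $\mathrm{T}\in\mathcal{M}\Phi(N_{1}\tilde{\oplus}N_{1}',\,N_{2}\tilde{\oplus}N_{2}')$.

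Finally I would take an $\mathcal{M}\Phi$-decomposition $N_{1}\tilde{\oplus}N_{1}'=P_{1}\tilde{\oplus}Q_{1}\stackrel{\mathrm{T}}{\longrightarrow}P_{2}\tilde{\oplus}Q_{2}=N_{2}\tilde{\oplus}N_{2}'$ of $\mathrm{T}$, in which $P_{1}\cong P_{2}$ and $Q_{1},Q_{2}$ are finitely generated. Using associativity and commutativity of $\tilde{\oplus}$ up to isomorphism, $(N_{1}\tilde{\oplus}N_{1}')\tilde{\oplus}Q_{2}\cong P_{1}\tilde{\oplus}Q_{1}\tilde{\oplus}Q_{2}\cong P_{2}\tilde{\oplus}Q_{1}\tilde{\oplus}Q_{2}\cong(N_{2}\tilde{\oplus}N_{2}')\tilde{\oplus}Q_{1}$, whence $N_{1}'\tilde{\oplus}(N_{1}\tilde{\oplus}Q_{2})\cong N_{2}\tilde{\oplus}(N_{2}'\tilde{\oplus}Q_{1})$. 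As $N_{1},N_{2}'$ are finitely generated by hypothesis and $Q_{1},Q_{2}$ are finitely generated, setting $\tilde{N_{2}}:=N_{2}$, $\tilde{N_{1}'}:=N_{1}'$, $\tilde{\tilde{N_{2}}}:=N_{2}'\tilde{\oplus}Q_{1}$ and $\tilde{\tilde{N_{1}'}}:=N_{1}\tilde{\oplus}Q_{2}$ gives submodules with all the stated properties. (If one insists on realising $\tilde{\tilde{N_{2}}},\tilde{\tilde{N_{1}'}}$ as closed submodules of $H_{\mathcal{A}}$, this costs nothing: a finitely generated Hilbert $\mathcal{A}$-module is countably generated, hence by Kasparov's stabilisation theorem is isomorphic to a complemented submodule of $H_{\mathcal{A}}$.)

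The heart of the argument is the identity used in the last step --- an $\mathcal{A}$-Fredholm operator $\mathrm{T}:X\to Y$ forces $X\tilde{\oplus}(\text{finitely generated})\cong Y\tilde{\oplus}(\text{finitely generated})$ --- which is immediate from the Mishchenko--Fomenko decomposition and is exactly why ``isomorphic up to finitely generated summands'' is the natural $\mathrm{C}^{*}$-module analogue of Djordjevi\'{c}'s ``isomorphic up to a finite-dimensional subspace''. The only point needing genuine care is a regularity one: one must know that the blocks of $\mathbf{M}_{\mathrm{C}}^{\mathcal{A}}$ appearing in the second step are honest adjointable operators between the submodules involved and that the compact parametrix restricts to a compact operator on the corner $N_{1}\tilde{\oplus}N_{1}'\to N_{2}\tilde{\oplus}N_{2}'$; both rest on the fact, established in \cite{MF} and \cite{I}, that in an $\mathcal{M}\Phi$- or $\mathcal{M}\Phi_{\pm}$-decomposition the occurring submodules are complemented with adjointable projections, the finitely generated ones in particular.
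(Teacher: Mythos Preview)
Your proof is correct and follows a genuinely different route from the paper's. The paper factorises $\mathbf{M}_{\mathrm{C}}^{\mathcal{A}}=\mathrm{D}'\mathrm{C}'\mathrm{F}'$ and, via Lemma~\ref{l210}, produces one specific $\mathcal{M}\Phi_{+}$-decomposition for $\mathrm{F}'$ and one specific $\mathcal{M}\Phi_{-}$-decomposition for $\mathrm{D}'$ whose ``small'' summands $R_{2}$ and $\mathrm{W}\mathrm{C}'(R_{2})$ are already isomorphic (because $\mathrm{W}\mathrm{C}'$ is invertible). It then lifts the \emph{given} decompositions of $\mathrm{F},\mathrm{D}$ to decompositions of $\mathrm{F}',\mathrm{D}'$ and invokes \cite[Corollaries 2.18 and 2.19]{I} --- the results comparing two different $\mathcal{M}\Phi_{\pm}$-decompositions of the same operator --- to produce the stabilising finitely generated summands $P,\tilde{P},P',\tilde{P}'$, finally transporting everything into $H_{\mathcal{A}}$ via $H_{\mathcal{A}}\cong L_{5}(H_{\mathcal{A}})$. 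You instead perform a Schur-complement reduction: writing $\mathbf{M}_{\mathrm{C}}^{\mathcal{A}}$ as a $4\times 4$ block matrix in the given decompositions and clearing rows and columns with the invertible pivots $\mathrm{F}_{1},\mathrm{D}_{1}$, you exhibit $\mathbf{M}_{\mathrm{C}}^{\mathcal{A}}$ as conjugate to $(\mathrm{F}_{1}\oplus\mathrm{D}_{1})\oplus\mathrm{T}$ with $\mathrm{T}:N_{1}\tilde{\oplus}N_{1}'\to N_{2}\tilde{\oplus}N_{2}'$, deduce that $\mathrm{T}$ is $\mathcal{A}$-Fredholm by compressing a parametrix, and read the stable isomorphism directly off an $\mathcal{M}\Phi$-decomposition of $\mathrm{T}$. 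Your argument is more self-contained (it does not need the comparison corollaries from \cite{I}) and yields fewer auxiliary pieces; the paper's argument is more modular and makes the role of those corollaries explicit. The one technical point you correctly flag --- that the projections in the decompositions $M_{i}\tilde{\oplus}N_{i}$ are adjointable, so that block entries and compressed compacts behave well --- is exactly the convention carried by $\tilde{\oplus}$ in \cite{MT}, and the paper relies on it implicitly as well. For the passage from ``$\mathrm{T}$ invertible modulo compacts'' to ``$\mathrm{T}\in\mathcal{M}\Phi$'' between modules other than $H_{\mathcal{A}}$, note that the paper itself asserts in the proof of Lemma~\ref{l210} that the argument of \cite[Theorem 2.7.6]{MT} carries over to arbitrary Hilbert $\mathrm{C}^{*}$-modules, so your appeal to Atkinson in that generality is consistent with the paper's framework.
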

	\begin{proof}
		Again write $ {\mathbf{M}_{\mathrm{C}}^{\mathcal{A}}} $ as  ${\mathbf{M}_{\mathrm{C}}^{\mathcal{A}}}=\mathrm{D}^\prime  \mathrm{C}^\prime  \mathrm{F}^\prime $ where 
		$$\mathrm{F}^{\prime}=
		\left\lbrack
		\begin{array}{ll}
		\mathrm{F} & 0 \\
		0 & 1 \\
		\end{array}
		\right \rbrack ,
		\mathrm{C}^{\prime}=\left\lbrack
		\begin{array}{ll}
		1 & \mathrm{C} \\
		0 & 1 \\
		\end{array}
		\right \rbrack ,
		\mathrm{D}^{\prime}=\left\lbrack
		\begin{array}{ll}
		1 & 0 \\
		0 & \mathrm{D} \\
		\end{array}
		\right \rbrack
		.$$
		Since $ {\mathbf{M}_{\mathrm{C}}^{\mathcal{A}}}  $ is $\mathcal{A}$-Fredholm, if 
		$$H_{\mathcal{A}} {\oplus} H_{\mathcal{A}} = M \tilde \oplus N\stackrel{{\mathbf{M}_{\mathrm{C}}^{\mathcal{A}}}}{\longrightarrow} M^{\prime} \tilde \oplus N^{\prime}= H_{\mathcal{A}} {\oplus} H_{\mathcal{A}}    $$ 
		is a decomposition w.r.t. which $ {\mathbf{M}_{\mathrm{C}}^{\mathcal{A}}}  $ has the matrix 
		$\left\lbrack
		\begin{array}{ll}
		({\mathbf{M}_{\mathrm{C}}^{\mathcal{A}}})_{1} & 0 \\
		0 & ({\mathbf{M}_{\mathrm{C}}^{\mathcal{A}}})_{4}  \\
		\end{array}
		\right \rbrack   $
		where $ ({\mathbf{M}_{\mathrm{C}}^{\mathcal{A}}})_{1}  $ is an isomorphism and $ N, N^{\prime}   $ are finitely generated, then  by Lemma 2.2 and also using that $ \mathrm{C}^{\prime}  $ is invertible, one may easily deduce that there exists a chain of decompositions
		$$H_{\mathcal{A}} {\oplus} H_{\mathcal{A}} = M \tilde \oplus N\stackrel{\mathrm{F}^{\prime}}{\longrightarrow}  R_{1} \tilde{\oplus} {R_{2}}\stackrel{\mathrm{C}^{\prime}}{\longrightarrow}   \mathrm{C}^{\prime}(R_{1}) \tilde{\oplus} \mathrm{C}^{\prime}(R_{2})\stackrel{\mathrm{D}^{\prime}}{\longrightarrow}  M^{\prime} \tilde \oplus N^{\prime}= H_{\mathcal{A}} {\oplus} H_{\mathcal{A}}$$ 
		w.r.t. which $\mathrm{F}^{\prime}, \mathrm{C}^{\prime}, \mathrm{D}^{\prime}   $ have matrices
		$$\left\lbrack
		\begin{array}{ll}
		\mathrm{F}_{1}^{\prime} & 0 \\
		0 & \mathrm{F}_{4}^{\prime} \\
		\end{array}
		\right \rbrack ,
		\left\lbrack
		\begin{array}{ll}
		\mathrm{C}_{1}^{\prime} & 0 \\
		0 & \mathrm{C}_{4}^{\prime} \\
		\end{array}
		\right \rbrack ,
		\left\lbrack
		\begin{array}{ll}
		\mathrm{D}_{1}^{\prime} & \mathrm{D}_{2}^{\prime} \\
		0 & \mathrm{D}_{4}^{\prime} \\
		\end{array}
		\right \rbrack ,$$
		respectively, 
		where $ \mathrm{F}_{1}^{\prime} ,\mathrm{C}_{1}^{\prime}, \mathrm{C}_{4}^{\prime}, \mathrm{D}_{1}^{\prime}     $ are isomorphisms. So $\mathrm{D}^{\prime}$ has the matrix\\
		$\left\lbrack
		\begin{array}{ll}
		\mathrm{D}_{1}^{\prime} & 0 \\
		0 & \mathrm{D}_{4}^{\prime} \\
		\end{array}
		\right \rbrack
		$ 
		w.r.t. the decomposition 
		$$H_{\mathcal{A}} {\oplus} H_{\mathcal{A}}=\mathrm{W}\mathrm{C}^{\prime}(R_{1}) \tilde{\oplus} \mathrm{W}\mathrm{C}^{\prime}(R_{2})\stackrel{\mathrm{D}^{\prime}}{\longrightarrow} M^{\prime} \tilde{\oplus}  N^{\prime} =  H_{\mathcal{A}} {\oplus} H_{\mathcal{A}},$$
		where $\mathrm{W}$ has the matrix
		$\left\lbrack
		\begin{array}{ll}
		1 & -{\mathrm{D}_{1}^{\prime}}^{-1}\mathrm{D}_{2}^{\prime} \\
		0 & 1 \\
		\end{array}
		\right \rbrack
		$ 
		w.r.t the decomposition 
		$$\mathrm{C}^{\prime}(R_{1}) \tilde{\oplus} \mathrm{C}^{\prime}(R_{2})\stackrel{\mathrm{W}}{\longrightarrow} \mathrm{C}^{\prime}(R_{1}) \tilde{\oplus} \mathrm{C}^{\prime}(R_{2})$$
		and is therefore an isomorphism.
		\\
		It follows from this that 
		$$\mathrm{F}^{\prime} \in \mathcal{M} \Phi_{+}  (H_{\mathcal{A}} {\oplus} H_{\mathcal{A}}), \mathrm{D}^{\prime} \in \mathcal{M} \Phi_{-}  (H_{\mathcal{A}} {\oplus} H_{\mathcal{A}}) ,$$
		as $N$ and $N^{\prime}   $ are finitely generated submodules of $H_{\mathcal{A}} {\oplus} H_{\mathcal{A}}$ . Moreover \\
		$ R_{2} \cong \mathrm{W}\mathrm{C}^{\prime} (R_{2}),$  as $ \mathrm{W} \mathrm{C}^{\prime} $ is an isomorphism.	
		
		%
		Since there exists an adjointable isomorphism between $H_{\mathcal{A}}$ and $H_{\mathcal{A}} \oplus H_{\mathcal{A}},$ using \cite[Theorem 2.2 ]{I} and \cite[ Theorem 2.3]{I}  it is easy to deduce that $\mathrm{F}^{\prime}$ is left invertible and $\mathrm{D}^{\prime}$ is right invertible in the „Calkin“ algebra on $B^{a}(H_{\mathcal{A}} \oplus H_{\mathcal{A}}) /$ $ K(H_{\mathcal{A}}\oplus H_{\mathcal{A}}).$ It follows from this that $\mathrm{F}$ is left invertible and $\mathrm{D}$ is right invertible in the „Calkin“ algebra $B^{a}(H_{\mathcal{A}}) / K(H_{\mathcal{A}})   ,$ hence $\mathrm{F} \in \mathcal{M} \Phi_{+}(H_{\mathcal{A}})$ and $\mathrm{D} \in \mathcal{M} \Phi_{-}(H_{\mathcal{A}})$ again by \cite[Theorem 2.2 ]{I} and  \cite[Theorem 2.3 ]{I}, respectively. Choose arbitrary $ \mathcal{M} \Phi_{+}$ and $\mathcal{M} \Phi_{-}$ decompositions for $\mathrm{F}$ and $\mathrm{D}$ respectively i.e.
		$$ H_{\mathcal{A}} = M_{1} \tilde \oplus {N_{1}}\stackrel{\mathrm{F}}{\longrightarrow} M_{2} \tilde \oplus N_{2}= H_{\mathcal{A}} ,$$  
		$$H_{\mathcal{A}} = M_{1}^{\prime} \tilde \oplus {N_{1}^{\prime}}\stackrel{\mathrm{D}}{\longrightarrow} M_{2}^{\prime} \tilde \oplus N_{2}^{\prime}= H_{\mathcal{A}}. $$
		Then 
		$$ H_{\mathcal{A}} \oplus H_{\mathcal{A}} =(M_{1} \oplus H_{\mathcal{A}}) \tilde{\oplus} ( N_{1} \oplus \lbrace 0 \rbrace )$$ 
		$$\downarrow \mathrm{F}^{\prime}$$
		$$ H_{\mathcal{A}} \oplus H_{\mathcal{A}} =(M_{2} \oplus H_{\mathcal{A}}) \tilde{\oplus} ( N_{2} \oplus \lbrace 0 \rbrace )  $$ 
		and
		$$ H_{\mathcal{A}} \oplus H_{\mathcal{A}} =(H_{\mathcal{A}}  \oplus M_{1}^{\prime}) \tilde{\oplus} (  \lbrace 0 \rbrace \oplus  N_{1}^{\prime})  $$ 
		$$\downarrow \mathrm{D}^{\prime}$$
		$$ H_{\mathcal{A}} \oplus H_{\mathcal{A}} =(H_{\mathcal{A}} \oplus M_{2}^{\prime} ) \tilde{\oplus} (  \lbrace 0 \rbrace \oplus  N_{2}^{\prime})   $$ 
		are $\mathcal{M} \Phi_{+}$ and $\mathcal{M} \Phi_{-}$ decompositions for $\mathrm{F}^{\prime}$ and $\mathrm{D}^{\prime}$ respectively.
		Hence the decomposition
		$$H_{\mathcal{A}} \oplus H_{\mathcal{A}} = M  \tilde{\oplus} N\stackrel{\mathrm{F}^{\prime}}{\longrightarrow} R_{1} \tilde{\oplus} R_{2} = H_{\mathcal{A}} \oplus H_{\mathcal{A}}  $$
		and the $\mathcal{M} \Phi_{+}$ decomposition given above for $\mathrm{F}^{\prime}$ are two $\mathcal{M} \Phi_{+}$ decompositions for $\mathrm{F}^{\prime}.$ Again, since there exists an adjointable  isomorphism between $H_{\mathcal{A}} \oplus H_{\mathcal{A}}$ and $ H_{\mathcal{A}} ,$ we may apply  \cite[Corollary 2.18 ]{I} to operator $\mathrm{F}^{\prime}$ to deduce that\\
		$(( N_{2} \oplus \lbrace 0 \rbrace ) \tilde{\oplus} P )\cong (R_{2} \tilde{\oplus} \tilde{P})$ for some finitely generated submodules $P, \tilde{P}$ of $ H_{\mathcal{A}} \oplus H_{\mathcal{A}}.$ Similarly, since
		$$ H_{\mathcal{A}} \oplus H_{\mathcal{A}} = \mathrm{W}\mathrm{C}^{\prime}(R_{1}) \tilde \oplus \mathrm{W}\mathrm{C}^{\prime} (R_{2})\stackrel{\mathrm{D}^{\prime}}{\longrightarrow}  M^{\prime} \tilde \oplus N^{\prime}= H_{\mathcal{A}} \oplus H_{\mathcal{A}}$$
		and
		$$ H_{\mathcal{A}} \oplus H_{\mathcal{A}} =(H_{\mathcal{A}}  \oplus M_{1}^{\prime}) \tilde{\oplus} (  \lbrace 0 \rbrace \oplus  N_{1}^{\prime})  $$ 
		$$\downarrow \mathrm{D}^{\prime}$$
		$$ H_{\mathcal{A}} \oplus H_{\mathcal{A}} =(H_{\mathcal{A}}  \oplus M_{2}^{\prime}) \tilde{\oplus} (  \lbrace 0 \rbrace \oplus  N_{2}^{\prime})  $$
		are two $\mathcal{M} \Phi_{-}$ decompositions for $\mathrm{D}^{\prime},$ we may by the same arguments apply\\
		\cite[Corollary 2.19 ]{I} to the operator $\mathrm{D}^{\prime}$ to deduce that 
		$$((  \lbrace 0 \rbrace \oplus  N_{1}^{\prime}) \tilde{\oplus} P^{\prime}) \cong  (\mathrm{W}\mathrm{C}^{\prime}(R_{2}) \tilde{\oplus} \tilde{P}^{\prime}) $$ 
		for some finitely generated submodules $P^{\prime} , \tilde{P}^{\prime} $ of $ H_{\mathcal{A}} \oplus H_{\mathcal{A}}.$ Since $\mathrm{W}\mathrm{C}^{\prime}$ is an isomorphism, we get 
		$$  ((\mathrm{W}\mathrm{C}^{\prime}(R_{2})  \tilde{\oplus} \tilde{P}^{\prime} ) \oplus   \tilde{P}) \cong         (\mathrm{W}\mathrm{C}^{\prime}(R_{2}) \oplus   \tilde{P}^{\prime} \oplus  \tilde{P}) \cong (R_{2}    \oplus  \tilde{P}  \oplus \tilde{P}^{\prime} )\cong   ((R_{2}    \tilde{\oplus} \tilde{P} )  \oplus  \tilde{P}^{\prime})  .$$ 
		Hence 
		$$ ((( N_{2} \oplus \lbrace 0 \rbrace ) \tilde{\oplus} P) \oplus \tilde{P}^{\prime}) \cong (( (\lbrace 0 \rbrace  \oplus N_{1}^{\prime})  \tilde{\oplus} {P}^{\prime} ) \oplus \tilde{P}) .$$
		This gives $ (N_{2} \oplus P \tilde{\oplus}  \tilde{P}^{\prime}) \cong (N_{1}^{\prime} \oplus P^{\prime} \oplus  \tilde{P}) $ (Here $\oplus$ always denotes the direct sum of modules in the sense of  \cite[Example 1.3.4 ]{MT}). Now
		$$N_{2} \oplus P \tilde{\oplus}  \tilde{P}^{\prime}=(N_{2}  \oplus  \lbrace 0 \rbrace \oplus  \lbrace 0 \rbrace   ) \tilde{\oplus} (\lbrace 0 \rbrace  \oplus  P \oplus {P}^{\prime}),$$
		$$N_{1} \oplus P^{\prime} \tilde{\oplus}  {P}^{\prime}=(N_{1}^{\prime}  \oplus  \lbrace 0 \rbrace \oplus  \lbrace 0 \rbrace   ) \tilde{\oplus} (\lbrace 0 \rbrace  \oplus  P^{\prime} \oplus \tilde{P})$$
		and they are submodules of $L_{5}(H_{\mathcal{A}})$ which is isomorphic to $H_{\mathcal{A}}$ ( the notation $L_{5}(H_{\mathcal{A}})$ is as in \cite[Example 1.3.4 ]{MT}). Call the isomorphism betwen $H_{\mathcal{A}} $ for and  $L_{5}(H_{\mathcal{A}})$ for $\mathrm{U}$ and set
		$$\tilde{N_{2}}=\mathrm{U}(N_{2}  \oplus  \lbrace 0 \rbrace \oplus  \lbrace 0 \rbrace   ), \tilde {\tilde{ N_{2} }} =\mathrm{U}(\lbrace 0 \rbrace  \oplus  P \oplus P^{\prime}) ,$$
		$$\tilde{N_{1}}=\mathrm{U}(N_{1}^{\prime}  \oplus  \lbrace 0 \rbrace \oplus  \lbrace 0 \rbrace   ), \tilde {\tilde{ N_{1} }}^{\prime} =\mathrm{U}(\lbrace 0 \rbrace  \oplus  P^{\prime} \oplus \tilde{P}) .$$
		Since $ P,P^{\prime},\tilde{ P },\tilde{ P }^{\prime}  $ are finitely generated, the result follows. 
	\end{proof}
	%
	\begin{remark}
		\cite[Theorem 3.2 ]{DDj}, part $(1)\Rightarrow (2)$ follows actually as a corollary from our Theorem \ref{t320} in the case when $X=Y=H,$ where $H$ is a Hilbert space. Indeed, by Theorem \ref{t320} if ${\mathbf{M}_{\mathrm{C}}} \in \Phi(H \oplus H),$ then $\mathrm{F} \in \Phi_{+}(H)$ and $\mathrm{D} \in \Phi_{-}(H)$. Hence $\mathrm{Im}\mathrm{F}$ and $\mathrm{Im}\mathrm{D}$ are closed, $\dim \ker \mathrm{F}, \dim \mathrm{Im}\mathrm{D}^{\bot}< \infty.$ W.r.t. the decompositions $H= \ker \mathrm{F}^{\bot} \oplus \ker \mathrm{F}\stackrel{\mathrm{F}}{\longrightarrow} \mathrm{Im}\mathrm{F} \oplus \mathrm{Im}\mathrm{F}^{\bot}=H$ and $$H= \ker \mathrm{D}^{\bot} \oplus \ker \mathrm{D}\stackrel{\mathrm{D}}{\longrightarrow} \mathrm{Im}\mathrm{D} \oplus \mathrm{Im}\mathrm{D}^{\bot}=H,$$ 
		$\mathrm{F},\mathrm{D}$ have matrices
		$\left\lbrack
		\begin{array}{ll}
		\mathrm{F}_{1} & 0 \\
		0 & \mathrm{F}_{4}\\
		\end{array}
		\right \rbrack,
		$
		$\left\lbrack
		\begin{array}{ll}
		\mathrm{D}_{1} & 0 \\
		0 & \mathrm{D}_{4} \\
		\end{array}
		\right \rbrack,
		$
		respectively, where $\mathrm{F}_{1}, \mathrm{D}_{1}$ are isomorphisms.\\
		From Theorem \ref{t320} it follows that there exist closed subspaces $\tilde{N_{2}}, \tilde{\tilde{N_{2}}},\tilde{N_{1}^{\prime}},\tilde{\tilde{N_{1}^{\prime}}} $ such that $\tilde{N_{2}}\cong \mathrm{Im} \mathrm{F}^{\bot} , \tilde{N_{1}^{\prime}} \cong \ker \mathrm{D}, \dim \tilde{\tilde{N_{2}}},\dim  \tilde{\tilde{N_{1}^{\prime}}} < \infty$ and \\
		$(\tilde{N_{2}} \tilde{\oplus}  \tilde{\tilde{N_{2}}}) \cong (\tilde{\tilde{N_{1}^{\prime}}} \tilde{\oplus} \tilde{\tilde{N_{1}^{\prime}}}) $. But this just means that $\mathrm{Im}\mathrm{F}^{\bot}$ and $\ker \mathrm{D}$ are isomorphic up to a finite dimensional subspace in the sense of \cite[Definition 2.2 ]{DDj} because we consider Hilbert subspaces now.	
	\end{remark}
	%
	\begin{proposition} \label{p340}
		Suppose that there exists some $\mathrm{C} \in  B^{a}  (H_{\mathcal{A}})   $ such that the inclusion $\sigma_{e}^{\mathcal{A}}({\mathbf{M}_{\mathrm{C}}^{\mathcal{A}}}) \subset \sigma_{e}^{\mathcal{A}}(\mathrm{F}) \cup \sigma_{e}^{\mathcal{A}}(\mathrm{D})   $ is proper. Then for any 
		$$\alpha \in [\sigma_{e}^{\mathcal{A}}(\mathrm{F}) \cup \sigma_{e}^{\mathcal{A}}(\mathrm{D}) ] \setminus  \sigma_{e}^{\mathcal{A}}({\mathbf{M}_{\mathrm{C}}^{\mathcal{A}}})  $$ 
		we have 
		$$\alpha \in \sigma_{e}^{\mathcal{A}}(\mathrm{F}) \cap \sigma_{e}^{\mathcal{A}}(\mathrm{D}).$$
	\end{proposition}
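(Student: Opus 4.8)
The plan is to fix an arbitrary
$$\alpha \in [\sigma_{e}^{\mathcal{A}}(\mathrm{F}) \cup \sigma_{e}^{\mathcal{A}}(\mathrm{D})] \setminus \sigma_{e}^{\mathcal{A}}({\mathbf{M}_{\mathrm{C}}^{\mathcal{A}}})$$
(by Proposition \ref{p310} the hypothesis that the inclusion is proper is exactly the assertion that this set is non-empty, so there is genuinely something to prove) and to show that \emph{neither} $\mathrm{F} - \alpha 1$ \emph{nor} $\mathrm{D} - \alpha 1$ is $\mathcal{A}$-Fredholm, which is precisely $\alpha \in \sigma_{e}^{\mathcal{A}}(\mathrm{F}) \cap \sigma_{e}^{\mathcal{A}}(\mathrm{D})$. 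The structural input is Theorem \ref{t320}: since ${\mathbf{M}_{\mathrm{C}}^{\mathcal{A}}} - \alpha \mathrm{I}$ is the operator matrix ${\mathbf{M}_{\mathrm{C}}^{\mathcal{A}}}(\mathrm{F} - \alpha 1, \mathrm{D} - \alpha 1)$ and it is $\mathcal{A}$-Fredholm by the choice of $\alpha$, Theorem \ref{t320} applies with $\mathrm{F}, \mathrm{D}$ replaced by $\mathrm{F} - \alpha 1, \mathrm{D} - \alpha 1$; in particular $\mathrm{F} - \alpha 1 \in \mathcal{M}\Phi_{+}(H_{\mathcal{A}})$, $\mathrm{D} - \alpha 1 \in \mathcal{M}\Phi_{-}(H_{\mathcal{A}})$, and for every $\mathcal{M}\Phi_{+}$-decomposition of $\mathrm{F} - \alpha 1$ (with submodules $N_{1}, N_{2}$) and every $\mathcal{M}\Phi_{-}$-decomposition of $\mathrm{D} - \alpha 1$ (with submodules $N_{1}', N_{2}'$) one obtains closed submodules with $N_{2} \cong \tilde{N_{2}}$, $N_{1}' \cong \tilde{N_{1}'}$, with $\tilde{\tilde{N_{2}}}$ and $\tilde{\tilde{N_{1}'}}$ finitely generated, and $\tilde{N_{2}} \tilde{\oplus} \tilde{\tilde{N_{2}}} \cong \tilde{N_{1}'} \tilde{\oplus} \tilde{\tilde{N_{1}'}}$.

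Next I would argue by contradiction. Suppose $\mathrm{F} - \alpha 1$ is $\mathcal{A}$-Fredholm. Then by the Mishchenko--Fomenko definition it admits a diagonal decomposition in which \emph{both} $N_{1}$ and $N_{2}$ are finitely generated and the corner $\mathrm{F}_{1}$ is an isomorphism; this is in particular an $\mathcal{M}\Phi_{+}$-decomposition. Fix in addition any $\mathcal{M}\Phi_{-}$-decomposition of $\mathrm{D} - \alpha 1$, so that $N_{2}'$ is finitely generated. Feeding both decompositions into Theorem \ref{t320}, the left-hand side $\tilde{N_{2}} \tilde{\oplus} \tilde{\tilde{N_{2}}}$ of the resulting isomorphism is finitely generated (both summands are), hence so is $\tilde{N_{1}'} \tilde{\oplus} \tilde{\tilde{N_{1}'}}$, and therefore so is its topological direct summand $\tilde{N_{1}'}$, using that a direct summand of a finitely generated Hilbert $C^{*}$-module is a bounded module-map image of it and hence finitely generated. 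Thus $N_{1}' \cong \tilde{N_{1}'}$ is finitely generated, so the chosen decomposition of $\mathrm{D} - \alpha 1$ has both $N_{1}'$ and $N_{2}'$ finitely generated with $\mathrm{D}_{1}'$ an isomorphism; that is, $\mathrm{D} - \alpha 1$ is $\mathcal{A}$-Fredholm as well. But then $\alpha \notin \sigma_{e}^{\mathcal{A}}(\mathrm{F}) \cup \sigma_{e}^{\mathcal{A}}(\mathrm{D})$, contradicting the choice of $\alpha$.

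The case where $\mathrm{D} - \alpha 1$ is assumed $\mathcal{A}$-Fredholm is handled symmetrically: now fix the ``finitely generated on both sides'' decomposition for $\mathrm{D} - \alpha 1$ and any $\mathcal{M}\Phi_{+}$-decomposition of $\mathrm{F} - \alpha 1$, read off from the same isomorphism in Theorem \ref{t320} that $\tilde{N_{2}}$, hence $N_{2}$, is finitely generated, and conclude that $\mathrm{F} - \alpha 1$ is $\mathcal{A}$-Fredholm — again a contradiction. Hence both $\mathrm{F} - \alpha 1$ and $\mathrm{D} - \alpha 1$ fail to be $\mathcal{A}$-Fredholm, which is the claim. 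There is no genuine obstacle in this argument; it is essentially bookkeeping of which of the four submodules supplied by Theorem \ref{t320} becomes finitely generated once one of $\mathrm{F} - \alpha 1$, $\mathrm{D} - \alpha 1$ is known to be $\mathcal{A}$-Fredholm, and the only non-formal ingredient is the permanence statement that a topological direct summand of a finitely generated Hilbert $C^{*}$-module is again finitely generated — that is the one step for which I would spend an extra sentence of justification.
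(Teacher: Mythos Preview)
Your argument is correct and follows essentially the same route as the paper: both apply Theorem \ref{t320} to ${\mathbf{M}_{\mathrm{C}}^{\mathcal{A}}}-\alpha\mathrm{I}$ and then push finite generation across the isomorphism $\tilde{N_{2}}\tilde{\oplus}\tilde{\tilde{N_{2}}}\cong\tilde{N_{1}'}\tilde{\oplus}\tilde{\tilde{N_{1}'}}$ to force the remaining operator into $\mathcal{M}\Phi(H_{\mathcal{A}})$. The only cosmetic difference is the framing of the contradiction (the paper starts from $\alpha\in[\sigma_{e}^{\mathcal{A}}(\mathrm{F})\setminus\sigma_{e}^{\mathcal{A}}(\mathrm{D})]\setminus\sigma_{e}^{\mathcal{A}}({\mathbf{M}_{\mathrm{C}}^{\mathcal{A}}})$ and shows $\mathrm{F}-\alpha 1$ is $\mathcal{A}$-Fredholm, whereas you assume $\mathrm{F}-\alpha 1$ is $\mathcal{A}$-Fredholm and deduce $\mathrm{D}-\alpha 1$ is), and your explicit remark that a direct summand of a finitely generated Hilbert $C^{*}$-module is finitely generated is a welcome clarification the paper leaves implicit.
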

	\begin{proof}
		Assume that $$ \alpha \in [\sigma_{e}^{\mathcal{A}}(\mathrm{F})  \setminus \sigma_{e}^{\mathcal{A}}(\mathrm{D}) ]  \setminus  \sigma_{e}^{\mathcal{A}}({\mathbf{M}_{\mathrm{C}}^{\mathcal{A}}})  .$$
		Then $(\mathrm{F}- \alpha 1) \notin \mathcal{M} \Phi (H_{\mathcal{A}})   $ and $(\mathrm{D}- \alpha 1) \in \mathcal{M} \Phi (H_{\mathcal{A}})      .$ Moreover, since\\
		$\alpha \notin \sigma_{e}^{\mathcal{A}}({\mathbf{M}_{\mathrm{C}}^{\mathcal{A}}})    $, then $({\mathbf{M}_{\mathrm{C}}^{\mathcal{A}}}- \alpha 1)$ is $\mathcal{A}$-Fredholm. From Theorem \ref{t320}, it follows that $(\mathrm{F}- \alpha 1) \in \mathcal{M} \Phi_{+} (H_{\mathcal{A}})    .$ Since $(\mathrm{F}- \alpha 1) \in \mathcal{M} \Phi_{+} (H_{\mathcal{A}}),(\mathrm{D}- \alpha 1) \in \mathcal{M} \Phi (H_{\mathcal{A}})     $, we can find decompositions 
		$$H_{\mathcal{A}} = M_{1} \tilde \oplus {N_{1}}\stackrel{\mathrm{F}-\alpha 1}{\longrightarrow} M_{2} \tilde \oplus N_{2}= H_{\mathcal{A}} ,$$
		$$H_{\mathcal{A}} = M_{1}^{\prime} \tilde \oplus {N_{1}^{\prime}}\stackrel{\mathrm{D}-\alpha 1}{\longrightarrow} M_{2}^{\prime} \tilde \oplus N_{2}^{\prime}= H_{\mathcal{A}} $$
		w.r.t. which $\mathrm{F}- \alpha 1,\mathrm{D}- \alpha 1   $ have matrices
		\begin{center}
			$
			\left\lbrack
			\begin{array}{ll}
			(\mathrm{F}- \alpha 1)_{1} & 0 \\
			0 & (\mathrm{F}- \alpha 1)_{4} \\
			\end{array}
			\right \rbrack ,
			$
			$
			\left\lbrack
			\begin{array}{ll}
			(\mathrm{D}- \alpha 1)_{1} & 0 \\
			0 & (\mathrm{D}- \alpha 1)_{4} \\
			\end{array}
			\right \rbrack ,
			$
		\end{center}
		respectively, where $(\mathrm{F}- \alpha 1)_{1} , (\mathrm{D}- \alpha 1)_{1}    $ are isomorphisms, $ N_{1},N_{1}^{\prime}  $ and $N_{2}^{\prime} $ are finitely generated. By Theorem \ref{t320} there exist then closed submodules\\
		$\tilde{N_{2}} ,\tilde{\tilde{N_{2}}},\tilde{N_{1}^{\prime}} , \tilde{\tilde{N_{1}^{\prime}}}  $ such that $N_{2} \cong \tilde{N_{2}}, N_{1}^{\prime} \cong \tilde{N_{1}^{\prime}},$ $ (\tilde{N_{2}} \tilde{\oplus} \tilde{\tilde{N_{2}}})  \cong (\tilde{N_{1}^{\prime}} \tilde{\oplus} \tilde{\tilde{N_{1}^{\prime}}})$ and $\tilde{\tilde{N_{2}}} , \tilde{\tilde{N_{1}^{\prime}}}   $ are finitely generated. But then, since $N_{1}^{\prime}  $ is finitely generated (as $(\mathrm{D}- \alpha 1) \in \mathcal{M} \Phi (H_{\mathcal{A}})  $), we get that $\tilde{N_{1}^{\prime}}   $ is finitely generated being isomorphic to ${N_{1}^{\prime}}.$ Hence $ ({\tilde{N_{1}^{\prime}}}  \tilde{\oplus} \tilde{\tilde{N_{1}^{\prime}}})  $ is finitely generated also (as both $\tilde{N_{1}^{\prime}}   $ and $\tilde{\tilde{N_{1}^{\prime}}}   $ are finitely generated). Thus $ ({\tilde{N_{2}}}  \tilde{\oplus} \tilde{\tilde{N_{2}}})   $ is finitely generated as well, so ${\tilde{N_{2}}}   $ is finitely generated.
		Therefore $N_{2} $ is finitely generated, being isomorphic to ${\tilde{N_{2}}} .$ Hence $\mathrm{F}-\alpha 1   $ is in $\mathcal{M} \Phi (H_{\mathcal{A}}) .$ This contradicts the choice of 
		$$ \alpha \in [\sigma_{e}^{\mathcal{A}}(\mathrm{F})  \setminus \sigma_{e}^{\mathcal{A}}(\mathrm{D}) ]  \setminus  \sigma_{e}^{\mathcal{A}}({\mathbf{M}_{\mathrm{C}}^{\mathcal{A}}})  .$$
		Thus 
		$$[\sigma_{e}^{\mathcal{A}}(\mathrm{F})  \setminus \sigma_{e}^{\mathcal{A}}(\mathrm{D}) ]  \setminus  \sigma_{e}^{\mathcal{A}}({\mathbf{M}_{\mathrm{C}}^{\mathcal{A}}}) = \varnothing .   $$ Analogously we can prove
		$$[\sigma_{e}^{\mathcal{A}}(\mathrm{D})  \setminus \sigma_{e}^{\mathcal{A}}(\mathrm{F}) ]  \setminus  \sigma_{e}^{\mathcal{A}}({\mathbf{M}_{\mathrm{C}}^{\mathcal{A}}}) = \varnothing .   $$ 
		The proposition follows.	
	\end{proof}
	Next, we define the following classes of operators on $H_{\mathcal{A}}:   $ 
	$$\mathcal{M} S_{+} (H_{\mathcal{A}}) = \lbrace \mathrm{F} \in B^{a}(H_{\mathcal{A}} \mid (\mathrm{F}-\alpha 1) \in \mathcal{M} \Phi_{-}^{+} (H_{\mathcal{A}}) $$ 
	$$ \mbox{ whenever } \alpha \in  Z(\mathcal{A})   \mbox{ and } (\mathrm{F}-\alpha 1) \in \mathcal{M} \Phi_{\pm } (H_{\mathcal{A}}) \rbrace    ,$$
	$$\mathcal{M} S_{-} (H_{\mathcal{A}}) = \lbrace \mathrm{F} \in B^{a}(H_{\mathcal{A}} \mid (\mathrm{F}-\alpha 1) \in \mathcal{M} \Phi_{+}^{-} (H_{\mathcal{A}}) $$ 
	$$ \mbox{ whenever } \alpha \in  Z(\mathcal{A})   \mbox{ and } (\mathrm{F}-\alpha 1) \in \mathcal{M} \Phi_{\pm } (H_{\mathcal{A}}) \rbrace    .$$
	\begin{proposition} \label{p350} 
		If $ \mathrm{F} \in \mathcal{M} S_{+} (H_{\mathcal{A}})$ or $\mathrm{D} \in  \mathcal{M} S_{-} (H_{\mathcal{A}}) $, then for all\\
		$\mathrm{C} \in B^{a}  (H_{\mathcal{A}})   $, we have $$\sigma_{e}^{\mathcal{A}}({\mathbf{M}_{\mathrm{C}}^{\mathcal{A}}}) =    \sigma_{e}^{\mathcal{A}}(\mathrm{F})  \cup \sigma_{e}^{\mathcal{A}}(\mathrm{D})$$	
	\end{proposition}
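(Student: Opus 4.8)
The plan is to combine Proposition~\ref{p310}, Proposition~\ref{p340} and Theorem~\ref{t320} with a direct reading of the definitions of $\mathcal{M}S_{\pm}(H_{\mathcal{A}})$. By Proposition~\ref{p310} we already have $\sigma_{e}^{\mathcal{A}}({\mathbf{M}_{\mathrm{C}}^{\mathcal{A}}}) \subseteq \sigma_{e}^{\mathcal{A}}(\mathrm{F}) \cup \sigma_{e}^{\mathcal{A}}(\mathrm{D})$ for every $\mathrm{C} \in B^{a}(H_{\mathcal{A}})$, so it suffices to show that, under the hypothesis $\mathrm{F} \in \mathcal{M}S_{+}(H_{\mathcal{A}})$ or $\mathrm{D} \in \mathcal{M}S_{-}(H_{\mathcal{A}})$, this inclusion can never be proper.

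I would argue by contradiction. Fix $\mathrm{C} \in B^{a}(H_{\mathcal{A}})$ and suppose the inclusion is proper; then there is $\alpha \in Z(\mathcal{A})$ with $\alpha \in [\sigma_{e}^{\mathcal{A}}(\mathrm{F}) \cup \sigma_{e}^{\mathcal{A}}(\mathrm{D})] \setminus \sigma_{e}^{\mathcal{A}}({\mathbf{M}_{\mathrm{C}}^{\mathcal{A}}})$. By Proposition~\ref{p340}, $\alpha \in \sigma_{e}^{\mathcal{A}}(\mathrm{F}) \cap \sigma_{e}^{\mathcal{A}}(\mathrm{D})$, so that \emph{neither} $\mathrm{F} - \alpha 1$ \emph{nor} $\mathrm{D} - \alpha 1$ is $\mathcal{A}$-Fredholm. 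On the other hand $\alpha \notin \sigma_{e}^{\mathcal{A}}({\mathbf{M}_{\mathrm{C}}^{\mathcal{A}}})$ means ${\mathbf{M}_{\mathrm{C}}^{\mathcal{A}}} - \alpha \mathrm{I} = {\mathbf{M}_{\mathrm{C}}^{\mathcal{A}}}(\mathrm{F} - \alpha 1, \mathrm{D} - \alpha 1)$ is $\mathcal{A}$-Fredholm, so Theorem~\ref{t320}, applied with $\mathrm{F} - \alpha 1$ and $\mathrm{D} - \alpha 1$ in the roles of $\mathrm{F}$ and $\mathrm{D}$, gives $\mathrm{F} - \alpha 1 \in \mathcal{M}\Phi_{+}(H_{\mathcal{A}})$ and $\mathrm{D} - \alpha 1 \in \mathcal{M}\Phi_{-}(H_{\mathcal{A}})$.

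Now I would exploit the $\mathcal{M}S$-hypothesis. If $\mathrm{F} \in \mathcal{M}S_{+}(H_{\mathcal{A}})$, then since $\alpha \in Z(\mathcal{A})$ and $\mathrm{F} - \alpha 1 \in \mathcal{M}\Phi_{+}(H_{\mathcal{A}}) \subseteq \mathcal{M}\Phi_{\pm}(H_{\mathcal{A}})$, the defining clause of $\mathcal{M}S_{+}(H_{\mathcal{A}})$ forces $\mathrm{F} - \alpha 1 \in \mathcal{M}\Phi_{-}^{+}(H_{\mathcal{A}})$. By Definition~\ref{d280n} this puts $\mathrm{F} - \alpha 1$ either in $\tilde{\mathcal{M}}\Phi_{-}^{+}(H_{\mathcal{A}}) \subseteq \mathcal{M}\Phi(H_{\mathcal{A}})$ or in $\mathcal{M}\Phi_{-}(H_{\mathcal{A}}) \setminus \mathcal{M}\Phi(H_{\mathcal{A}})$; the second alternative is impossible, since combined with $\mathrm{F} - \alpha 1 \in \mathcal{M}\Phi_{+}(H_{\mathcal{A}})$ it would give $\mathrm{F} - \alpha 1 \in \mathcal{M}\Phi_{+}(H_{\mathcal{A}}) \cap \mathcal{M}\Phi_{-}(H_{\mathcal{A}}) = \mathcal{M}\Phi(H_{\mathcal{A}})$. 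Hence $\mathrm{F} - \alpha 1 \in \mathcal{M}\Phi(H_{\mathcal{A}})$, contradicting $\alpha \in \sigma_{e}^{\mathcal{A}}(\mathrm{F})$. The case $\mathrm{D} \in \mathcal{M}S_{-}(H_{\mathcal{A}})$ is symmetric: from $\mathrm{D} - \alpha 1 \in \mathcal{M}\Phi_{-}(H_{\mathcal{A}}) \subseteq \mathcal{M}\Phi_{\pm}(H_{\mathcal{A}})$ one gets $\mathrm{D} - \alpha 1 \in \mathcal{M}\Phi_{+}^{-}(H_{\mathcal{A}})$, and since $\tilde{\mathcal{M}}\Phi_{+}^{-}(H_{\mathcal{A}}) \subseteq \mathcal{M}\Phi(H_{\mathcal{A}})$ while $\mathrm{D} - \alpha 1 \in \mathcal{M}\Phi_{-}(H_{\mathcal{A}})$ excludes the other alternative, one arrives at $\mathrm{D} - \alpha 1 \in \mathcal{M}\Phi(H_{\mathcal{A}})$, contradicting $\alpha \in \sigma_{e}^{\mathcal{A}}(\mathrm{D})$. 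In either case no such $\alpha$ can exist, so the inclusion of Proposition~\ref{p310} is an equality.

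The step I expect to be the real content, beyond unwinding definitions, is the identity $\mathcal{M}\Phi_{+}(H_{\mathcal{A}}) \cap \mathcal{M}\Phi_{-}(H_{\mathcal{A}}) = \mathcal{M}\Phi(H_{\mathcal{A}})$ used above, i.e. that an operator which is simultaneously upper and lower semi-$\mathcal{A}$-Fredholm is $\mathcal{A}$-Fredholm. I would obtain this from the ``Calkin algebra'' descriptions already invoked in the proof of Theorem~\ref{t320} --- $\mathcal{M}\Phi_{+}(H_{\mathcal{A}})$ (resp. $\mathcal{M}\Phi_{-}(H_{\mathcal{A}})$) being precisely the left- (resp. right-) invertible elements of $B^{a}(H_{\mathcal{A}})/K(H_{\mathcal{A}})$, by \cite{I} --- together with the Mishchenko--Fomenko Atkinson theorem \cite{MF}, since in any algebra an element that is both left and right invertible is invertible. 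If one prefers not to invoke Proposition~\ref{p340}, the same conclusion can be reached directly from $\alpha \notin \sigma_{e}^{\mathcal{A}}({\mathbf{M}_{\mathrm{C}}^{\mathcal{A}}})$: the $\mathcal{M}S$-argument first yields that the one diagonal block singled out by the hypothesis is $\mathcal{A}$-Fredholm, and then the factorisation of ${\mathbf{M}_{\mathrm{C}}^{\mathcal{A}}} - \alpha \mathrm{I}$ from the proof of Proposition~\ref{p310}, together with the $\mathcal{A}$-Fredholmness of ${\mathbf{M}_{\mathrm{C}}^{\mathcal{A}}} - \alpha \mathrm{I}$, forces the other diagonal block to be $\mathcal{A}$-Fredholm too.
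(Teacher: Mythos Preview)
Your argument is correct and its core coincides with the paper's: assume $\alpha$ lies in $[\sigma_e^{\mathcal{A}}(\mathrm F)\cup\sigma_e^{\mathcal{A}}(\mathrm D)]\setminus\sigma_e^{\mathcal{A}}(\mathbf M_{\mathrm C}^{\mathcal A})$, apply Theorem~\ref{t320} to get $\mathrm F-\alpha 1\in\mathcal M\Phi_+(H_{\mathcal A})$ and $\mathrm D-\alpha 1\in\mathcal M\Phi_-(H_{\mathcal A})$, then use the $\mathcal M S_\pm$ hypothesis together with $\mathcal M\Phi_+(H_{\mathcal A})\cap\mathcal M\Phi_-(H_{\mathcal A})=\mathcal M\Phi(H_{\mathcal A})$ (which the paper quotes as \cite[Corollary~2.4]{I}, exactly the Calkin-algebra fact you identify) to force the distinguished diagonal block to be $\mathcal A$-Fredholm.

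The one genuine difference is in the endgame. You invoke Proposition~\ref{p340} to place $\alpha$ in $\sigma_e^{\mathcal A}(\mathrm F)\cap\sigma_e^{\mathcal A}(\mathrm D)$, so that showing \emph{one} block is Fredholm already yields the contradiction. The paper instead uses the full module-isomorphism conclusion of Theorem~\ref{t320} (the existence of $\tilde N_2,\tilde{\tilde N_2},\tilde N_1',\tilde{\tilde N_1'}$ with $\tilde N_2\tilde\oplus\tilde{\tilde N_2}\cong\tilde N_1'\tilde\oplus\tilde{\tilde N_1'}$) together with \cite[Lemma~2.16]{I} to transfer ``finitely generated'' from $N_2$ to $N_1'$ and conclude that \emph{both} blocks are Fredholm, contradicting $\alpha\in\sigma_e^{\mathcal A}(\mathrm F)\cup\sigma_e^{\mathcal A}(\mathrm D)$. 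Your route is a little more economical (Proposition~\ref{p340} already packages that structural transfer), while the paper's keeps the proof self-contained relative to Theorem~\ref{t320}. Your final remark about obtaining Fredholmness of the second block directly from the factorisation of $\mathbf M_{\mathrm C}^{\mathcal A}-\alpha\mathrm I$ is also valid and gives a third, equally short, way to close the argument. A cosmetic point: your case split on Definition~\ref{d280n} can be shortened to the single observation $\mathcal M\Phi_-^+(H_{\mathcal A})\subseteq\mathcal M\Phi_-(H_{\mathcal A})$, which is how the paper phrases it.
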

	\begin{proof}
		By Proposition \ref{p340}, it suffices to show the inclusion. Assume that
		$$\alpha \in   [\sigma_{e}^{\mathcal{A}}(\mathrm{F})  \cup \sigma_{e}^{\mathcal{A}}(\mathrm{D}) ]  \setminus  \sigma_{e}^{\mathcal{A}}({\mathbf{M}_{\mathrm{C}}^{\mathcal{A}}}). $$ 
		Then, $ ({\mathbf{M}_{\mathrm{C}}^{\mathcal{A}}} - \alpha 1) \in \mathcal{M} \Phi (H_{\mathcal{A}} \oplus H_{\mathcal{A}})   .$ By Theorem \ref{t320}, we have
		$$(\mathrm{F} - \alpha 1) \in \mathcal{M} \Phi_{+} (H_{\mathcal{A}}),(\mathrm{D} - \alpha 1) \in \mathcal{M} \Phi_{-} (H_{\mathcal{A}})    .$$
		Let again
		$$H_{\mathcal{A}} = M_{1} \tilde \oplus {N_{1}}\stackrel{\mathrm{F}-\alpha 1}{\longrightarrow} M_{2} \tilde \oplus N_{2}= H_{\mathcal{A}} ,$$
		$$H_{\mathcal{A}} = M_{1}^{\prime} \tilde \oplus {N_{1}^{\prime}}\stackrel{\mathrm{D}-\alpha 1}{\longrightarrow} M_{2}^{\prime} \tilde \oplus N_{2}^{\prime}= H_{\mathcal{A}} $$ 
		be decompositions w.r.t. which $\mathrm{F} - \alpha 1,\mathrm{D} - \alpha 1    $ have matrices
		\begin{center}
			$
			\left\lbrack
			\begin{array}{ll}
			(\mathrm{F}- \alpha 1)_{1} & 0 \\
			0 & (\mathrm{F}- \alpha 1)_{4} \\
			\end{array}
			\right \rbrack 
			$,
			$
			\left\lbrack
			\begin{array}{ll}
			(\mathrm{D}- \alpha 1)_{1} & 0 \\
			0 & (\mathrm{D}- \alpha 1)_{4} \\
			\end{array}
			\right \rbrack ,
			$	
		\end{center}
		respectively, where $(\mathrm{F}- \alpha 1)_{1}, (\mathrm{D}- \alpha 1)_{1}   $, are isomorphisms and $N_{1},N_{2}^{\prime}   $ are finitely generated submodules of $H_{\mathcal{A}}   .$ Again, by Theorem \ref{t320}, there exist closed submodules $\tilde{N_{2}} ,\tilde{\tilde{N_{2}}},\tilde{N_{1}^{\prime}} , \tilde{\tilde{N_{1}^{\prime}}}  $ such that $N_{2} \cong \tilde{N_{2}}, N_{1}^{\prime} \cong \tilde{N_{1}^{\prime}},$ $ (\tilde{N_{2}} \tilde{\oplus} \tilde{\tilde{N_{2}}})  \cong (\tilde{N_{1}^{\prime}} \tilde{\oplus} \tilde{\tilde{N_{1}^{\prime}}})$ and $\tilde{\tilde{N_{2}}} , \tilde{\tilde{N_{1}^{\prime}}}   $ are finitely generated submodules. If $\mathrm{F} \in \mathcal{M} S_{+} (H_{\mathcal{A}})   $, then since \\
		$(\mathrm{F}- \alpha 1) \in \mathcal{M} \Phi_{\pm} (H_{\mathcal{A}})  $, we get that $(\mathrm{F}- \alpha 1) \in \mathcal{M} \Phi_{-}^{+} (H_{\mathcal{A}})    $. Thus\\
		$(\mathrm{F}- \alpha 1) \in \mathcal{M} \Phi_{-} (H_{\mathcal{A}})    $ in particular. So $(\mathrm{F}- \alpha 1) \in \mathcal{M} \Phi_{+} (H_{\mathcal{A}}) \cap \mathcal{M} \Phi_{-} (H_{\mathcal{A}})   $ and by \cite[Corollary 2.4]{I}, we know that $ \mathcal{M} \Phi_{+} (H_{\mathcal{A}}) \cap  \mathcal{M} \Phi_{-} (H_{\mathcal{A}})= \mathcal{M} \Phi (H_{\mathcal{A}}).$ Then, by \cite[Lemma 2.16]{I}, we have that $ N_{2}  $ must be finitely generated, hence $ \tilde{N_{2}}  $ must be finitely generated. Thus $ {\tilde{N_{2}}}  \tilde{\oplus} \tilde{\tilde{N_{2}}}   $ is finitely generated. \\
		Since $ ({\tilde{N_{2}}}  \tilde{\oplus} \tilde{\tilde{N_{2}}}) \cong   ({\tilde{N_{1}^{\prime}}}  \tilde{\oplus} \tilde{\tilde{N_{1}^{\prime}}}) ,$ it follows that $ \tilde{N_{1}^{\prime}}  $ is finitely generated, hence $N_{1}^{\prime}$ is finitely generated also. So $(\mathrm{D}- \alpha 1) \in \mathcal{M} \Phi (H_{\mathcal{A}})   $. Similarly, we can show that if $\mathrm{D} \in S_{-}(H_{\mathcal{A}})   $, then $(\mathrm{F}- \alpha 1) \in \mathcal{M} \Phi (H_{\mathcal{A}})     $. In both cases $(\mathrm{F}- \alpha 1) \in \mathcal{M} \Phi (H_{\mathcal{A}})    $ and $(\mathrm{D}- \alpha 1) \in \mathcal{M} \Phi (H_{\mathcal{A}})    ,$ which contradicts that $\alpha \in   \sigma_{e}^{\mathcal{A}}(\mathrm{F})  \cup \sigma_{e}^{\mathcal{A}}(\mathrm{D})   .$	
	\end{proof}
	\begin{theorem} \label{t360}
		Let $ \mathrm{F} \in \mathcal{M} \Phi_{+} (H_{\mathcal{A}}), \mathrm{D} \in \mathcal{M} \Phi_{-} (H_{\mathcal{A}})  $ and suppose that there exist decompositions
		$$H_{\mathcal{A}} = M_{1} \tilde \oplus {N_{1}}\stackrel{\mathrm{F}}{\longrightarrow} N_{2}^{\perp} \oplus N_{2}= H_{\mathcal{A}} $$ 
		$$H_{\mathcal{A}} = {N_{1}^{\prime}}^{\perp}  \oplus {N_{1}^{\prime}}\stackrel{\mathrm{D}}{\longrightarrow} M_{2}^{\prime} \tilde \oplus N_{2}^{\prime}= H_{\mathcal{A}} $$ 
		w.r.t. which $\mathrm{F}, \mathrm{D}$ have matrices 
		\begin{center}
			$\left\lbrack
			\begin{array}{ll}
			\mathrm{F}_{1} & 0 \\
			0 & \mathrm{F}_{4} \\
			\end{array}
			\right \rbrack
			$,
			$\left\lbrack
			\begin{array}{ll}
			\mathrm{D}_{1} & 0 \\
			0 & \mathrm{D}_{4} \\
			\end{array}
			\right \rbrack
			,$	
		\end{center}
		respectively, where $\mathrm{F}_{1},\mathrm{D}_{1}$ are isomorphims, $ N_{1},N_{2}^{\prime}  $ are finitely generated and assume also that one of the following statements  hold:\\
		a) There exists some $\mathrm{J} \in B^{a}  (N_{2}, N_{1}^{\prime})  $ such that $N_{2} \cong \mathrm{Im}\mathrm{J}   $ and $\mathrm{Im}\mathrm{J}^{\perp} $ is finitely generated.\\
		b) There exists some $\mathrm{J}^{\prime} \in B^{a}  (N_{1}^{\prime}, N_{2} )   $ such that $N_{1}^{\prime} \cong \mathrm{Im}\mathrm{J}^{\prime}, (\mathrm{Im}\mathrm{J}^{\prime})^{\perp} $ is finitely generated.\\
		Then $ {\mathbf{M}_{\mathrm{C}}^{\mathcal{A}}} \in \mathcal{M} \Phi(H_{\mathcal{A}} \oplus H_{\mathcal{A}})  $ for some $\mathrm{C} \in B^{a}  (H_{\mathcal{A}})    .$	
	\end{theorem}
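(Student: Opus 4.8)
The plan is to construct an explicit $\mathrm{C} \in B^{a}(H_{\mathcal{A}})$ making $\mathbf{M}_{\mathrm{C}}^{\mathcal{A}}$ $\mathcal{A}$-Fredholm by using the operator $\mathrm{J}$ (in case a)) or $\mathrm{J}^{\prime}$ (in case b)) to ``transfer'' the cokernel of $\mathrm{F}$ onto the kernel of $\mathrm{D}$, so that the off-diagonal entry $\mathrm{C}$ cancels out the problematic finitely generated pieces. Concretely, I would first record the block structures: with respect to $H_{\mathcal{A}} = M_1 \tilde\oplus N_1 \to N_2^{\perp} \oplus N_2 = H_{\mathcal{A}}$ the operator $\mathrm{F}$ is $\mathrm{diag}(\mathrm{F}_1, \mathrm{F}_4)$ with $\mathrm{F}_1$ an isomorphism and $N_1$ finitely generated, and with respect to $H_{\mathcal{A}} = {N_1^{\prime}}^{\perp} \oplus N_1^{\prime} \to M_2^{\prime} \tilde\oplus N_2^{\prime} = H_{\mathcal{A}}$ the operator $\mathrm{D}$ is $\mathrm{diag}(\mathrm{D}_1, \mathrm{D}_4)$ with $\mathrm{D}_1$ an isomorphism and $N_2^{\prime}$ finitely generated. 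The second copy of $H_{\mathcal{A}}$ (the domain of the matrix acting on $H_{\mathcal{A}} \oplus H_{\mathcal{A}}$) is decomposed as ${N_1^{\prime}}^{\perp} \oplus N_1^{\prime}$, and the first copy in the codomain as $N_2^{\perp} \oplus N_2$.

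In case a), let $\mathrm{J} \in B^{a}(N_2, N_1^{\prime})$ with $N_2 \cong \mathrm{Im}\,\mathrm{J}$ and $\mathrm{Im}\,\mathrm{J}^{\perp}$ finitely generated; note $\mathrm{J}$ is then bounded below, i.e.\ an isomorphism onto its (closed, complemented) image. Define $\mathrm{C} : H_{\mathcal{A}} \to H_{\mathcal{A}}$ to be, relative to the decompositions $H_{\mathcal{A}} = {N_1^{\prime}}^{\perp} \oplus N_1^{\prime}$ (domain) and $H_{\mathcal{A}} = N_2^{\perp} \oplus N_2$ (codomain), the map that sends $N_1^{\prime}$ isomorphically back onto $N_2$ via (a left inverse of) $\mathrm{J}$ composed with the identification, and kills ${N_1^{\prime}}^{\perp}$ — i.e.\ $\mathrm{C}$ has matrix $\left\lbrack \begin{array}{ll} 0 & \mathrm{C}_2 \\ 0 & \mathrm{C}_4 \end{array}\right\rbrack$ where $\mathrm{C}_4 : N_1^{\prime} \to N_2$ is an isomorphism (obtained by inverting $\mathrm{J}$ on its image and using $N_2 \cong \mathrm{Im}\,\mathrm{J}$) and $\mathrm{C}_2 = 0$. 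Then I would write out $\mathbf{M}_{\mathrm{C}}^{\mathcal{A}}$ as a block operator from $(M_1 \tilde\oplus N_1) \oplus ({N_1^{\prime}}^{\perp} \oplus N_1^{\prime})$ to $(N_2^{\perp} \oplus N_2) \oplus (M_2^{\prime} \tilde\oplus N_2^{\prime})$ and rearrange the summands: the ``good'' part $\mathrm{F}_1 \oplus \mathrm{D}_1$ acting $M_1 \oplus {N_1^{\prime}}^{\perp} \to N_2^{\perp} \oplus M_2^{\prime}$ is an isomorphism, and on the remaining finitely generated part $N_1 \oplus N_1^{\prime} \to N_2 \oplus N_2^{\prime}$ the matrix is $\left\lbrack\begin{array}{ll} \mathrm{F}_4 & \mathrm{C}_4 \\ 0 & \mathrm{D}_4 \end{array}\right\rbrack$; since $\mathrm{C}_4 : N_1^{\prime} \to N_2$ is an isomorphism, after a further invertible column/row operation this block is seen to be a ``shift'' between finitely generated modules, hence $\mathcal{A}$-Fredholm (both kernel-complement and cokernel directions are finitely generated because $N_1, N_2, N_1^{\prime}, N_2^{\prime}, \mathrm{Im}\,\mathrm{J}^{\perp}$ all are). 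Putting these together with Lemma \ref{l210}-style gluing shows $\mathbf{M}_{\mathrm{C}}^{\mathcal{A}}$ admits an $\mathcal{M}\Phi$-decomposition. Case b) is symmetric, using $\mathrm{J}^{\prime}$ to define an isomorphism $\mathrm{C}_4 : N_1^{\prime} \to N_2$ in the other direction (invert on the image, absorb the finitely generated complement into the ``error'' blocks).

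I expect the main obstacle to be the bookkeeping of the direct sums \emph{without orthogonality} ($\tilde\oplus$ versus $\oplus$): one must check that the block $\left\lbrack\begin{array}{ll} \mathrm{F}_4 & \mathrm{C}_4 \\ 0 & \mathrm{D}_4 \end{array}\right\rbrack$ on $N_1 \oplus N_1^{\prime} \to N_2 \oplus N_2^{\prime}$ genuinely has closed, complemented, finitely generated kernel-complement and cokernel, and that the ambient decomposition of $H_{\mathcal{A}} \oplus H_{\mathcal{A}}$ one assembles is a legitimate $\mathcal{M}\Phi$-decomposition in the sense of Definition \ref{d210n} / the Mishchenko--Fomenko definition — i.e.\ that the ``isomorphism part'' really is an adjointable isomorphism and the complementary part finitely generated. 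The fact that $\mathrm{Im}\,\mathrm{J}$ (resp.\ $\mathrm{Im}\,\mathrm{J}^{\prime}$) is orthogonally complemented with finitely generated complement is exactly what makes $\mathrm{C}_4$ well-defined and adjointable, and what keeps the error block finitely generated; verifying adjointability of the assembled $\mathrm{C}$ and of the change-of-basis operators is the routine-but-delicate step. Once the block is identified as $\mathcal{A}$-Fredholm, invoking that a $2\times 2$ block-diagonal operator with one diagonal entry an isomorphism and the other $\mathcal{A}$-Fredholm is $\mathcal{A}$-Fredholm (which follows from the Mishchenko--Fomenko framework used throughout) finishes the proof.
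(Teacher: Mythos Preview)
Your overall strategy---constructing $\mathrm{C}$ explicitly from (the inverse of) $\mathrm{J}$ or $\mathrm{J}'$---is exactly the paper's approach, and the choice of $\mathrm{C}$ you describe coincides with the paper's. However, the block analysis you give contains two related errors that leave a genuine gap.

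First, you assert that $\mathrm{C}_4 : N_1' \to N_2$ is an isomorphism. It is not: in case a) the map $\mathrm{J}$ embeds $N_2$ isomorphically onto $\mathrm{Im}\,\mathrm{J} \subseteq N_1'$, so ``inverting $\mathrm{J}$ on its image'' produces a map $\mathrm{Im}\,\mathrm{J} \to N_2$; extending by zero on $(\mathrm{Im}\,\mathrm{J})^{\perp}$ gives a surjection $N_1' \to N_2$ with kernel $(\mathrm{Im}\,\mathrm{J})^{\perp}$, not an isomorphism. (Symmetrically, in case b) the map $\mathrm{J}'$ itself is bounded below but not onto $N_2$.) Second, you write that ``$N_1, N_2, N_1', N_2', \mathrm{Im}\,\mathrm{J}^{\perp}$ all are'' finitely generated. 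Only $N_1$, $N_2'$ and $(\mathrm{Im}\,\mathrm{J})^{\perp}$ are given to be finitely generated; $N_2$ and $N_1'$ typically are not---this is precisely the content of $\mathrm{F}$ and $\mathrm{D}$ being only semi-$\mathcal{A}$-Fredholm. Consequently your error block
$\left\lbrack\begin{smallmatrix}\mathrm{F}_4 & \mathrm{C}_4 \\ 0 & \mathrm{D}_4\end{smallmatrix}\right\rbrack$
acts between modules $N_1 \oplus N_1' \to N_2 \oplus N_2'$ that are not finitely generated, and the reason you give for its $\mathcal{A}$-Fredholmness fails.

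The fix---and this is what the paper does---is to move $\mathrm{Im}\,\mathrm{J}$ (resp.\ $\mathrm{Im}\,\mathrm{J}'$) out of the error block and into the isomorphism block. In case a) the paper sets $\mathrm{C}=\iota\,\mathrm{P}_{\mathrm{Im}\,\mathrm{J}}$ (where $\iota\mathrm{J}=\mathrm{id}_{N_2}$) and uses the decomposition
\[
\bigl(M_1 \oplus ({N_1'}^{\perp} \tilde\oplus \mathrm{Im}\,\mathrm{J})\bigr)\ \tilde\oplus\ \bigl(N_1 \oplus (\mathrm{Im}\,\mathrm{J})^{\perp}\bigr)
\ \stackrel{\mathbf{M}_{\mathrm{C}}^{\mathcal{A}}}{\longrightarrow}\
\bigl(H_{\mathcal{A}} \oplus M_2'\bigr)\ \tilde\oplus\ \bigl(\{0\} \oplus N_2'\bigr),
\]
so that the residual pieces $N_1 \oplus (\mathrm{Im}\,\mathrm{J})^{\perp}$ and $\{0\}\oplus N_2'$ are genuinely finitely generated; one then checks directly that the $(1,1)$-block is an isomorphism onto $H_{\mathcal{A}}\oplus M_2'$ and finishes by the diagonalization lemma \cite[Lemma 2.7.10]{MT}. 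Case b) is handled the same way, with residual pieces $N_1\oplus\{0\}$ and $(\mathrm{Im}\,\mathrm{J}')^{\perp}\oplus N_2'$. Your argument becomes correct once you make this refinement; as written, the claim that $\mathrm{C}_4$ is an isomorphism is the missing step.
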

	\begin{remark}
		$\mathrm{Im}\mathrm{J}^{\perp} $ in part a) denotes the orthogonal complement of $\mathrm{Im}\mathrm{J}$ in $N_{1}^{\prime}$ and $\mathrm{Im}{\mathrm{J}^{\prime}}^{\perp}$ denotes the orthogonal complement of $\mathrm{Im}{\mathrm{J}^{\prime}}$ in $N_{2}.$ \\
		By \cite[Theorem 2.3.3 ]{MT}, if  $\mathrm{Im}\mathrm{J}$ is closed, then $ \mathrm{Im}\mathrm{J}  $ is indeed orthogonally complementable, so since in assumption a) above $\mathrm{Im}\mathrm{J} \cong N_{2}   $, it follows that $\mathrm{Im} \mathrm{J}   $ is closed, so $ N_{1}^{\prime} = \mathrm{Im}\mathrm{J} \oplus \mathrm{Im} \mathrm{J}^{\perp}  $. Similarly, in b) $N_{2}=\mathrm{Im}\mathrm{J}^{\prime} \oplus \mathrm{Im}{\mathrm{J}^{\prime}}^{\perp}.$
	\end{remark}
	\begin{proof}
		Suppose that b) holds, and consider the operator $\tilde{\mathrm{J}^{\prime}} = \mathrm{J}^{\prime}\mathrm{P}_{N_{1}^{\prime}}  $ where $ \mathrm{P}_{N_{1}^{\prime}}   $ denotes the orthogonal projection onto $   N_{1}^{\prime}.$ Then $\tilde{\mathrm{J}^{\prime}}   $ can be considered as a bounded adjointable operator on $H_{\mathcal{A}}   $ (as $N_{2}   $ is orthogonally complementable in
		$( H_{\mathcal{A}} ).$
		To simplify notation, we let $M_{2}=N_{2}^{\perp},M_{1}^{\prime} = {N_{1}^{\prime}}^{\perp}  $ and we let\\
		$\mathbf{M}_{\tilde{\mathrm{J}^{\prime}}}^{\mathcal{A}}=\mathbf{M}_{\tilde{\mathrm{J}^{\prime}}} .$ We claim then that w.r.t. the decomposition
		$$H_{\mathcal{A}} \oplus H_{\mathcal{A}}= (M_{1} \oplus H_{\mathcal{A}}) \tilde{\oplus} (N_{1} \oplus \lbrace 0 \rbrace )   $$ 
		$$\downarrow  \mathbf{M}_{\tilde{\mathrm{J}^{\prime}}}$$
		$$H_{\mathcal{A}} \oplus H_{\mathcal{A}}=((M_{2} \tilde{\oplus} \mathrm{Im}\mathrm{J}^{\prime} )\oplus M_{2}^{\prime}) \tilde{\oplus} ( {\mathrm{Im}\mathrm{J}^{\prime}}^{\perp} \oplus N_{2}^{\prime} ) ,$$
		$\mathbf{M}_{\tilde{\mathrm{J}^{\prime}}} $ has the matrix
		\begin{center}
			$\left\lbrack
			\begin{array}{ll}
			(\mathbf{M}_{\tilde{\mathrm{J}^{\prime}}})_{1} & (\mathbf{M}_{\tilde{\mathrm{J}^{\prime}}})_{2} \\
			(\mathbf{M}_{\tilde{\mathrm{J}^{\prime}}})_{3} & (\mathbf{M}_{\tilde{\mathrm{J}^{\prime}}})_{4} \\
			\end{array}
			\right \rbrack
			,$	
		\end{center}
		where $(\mathbf{M}_{\tilde{\mathrm{J}^{\prime}}})_{1}   $ is an isomorphism. To see this observe first that 
		$$(\mathbf{M}_{\tilde{\mathrm{J}^{\prime}}})_{1}=\sqcap_{(M_{2} \tilde{\oplus} \mathrm{Im}\mathrm{J}^{\prime} )\oplus M_{2}^{\prime}} {\mathbf{M}_{\tilde{\mathrm{J}^{\prime}}}}_{{\mid}_{M_{1} \oplus H_{\mathcal{A}}}} =$$
		\begin{center}
			$
			\left\lbrack
			\begin{array}{ll}
			\mathrm{F}_{\mid_{M_{1}}} & \tilde{\mathrm{J}^{\prime}} \\
			0 & \mathrm{D}\sqcap_{M_{1}^{\prime}} \\
			\end{array}
			\right \rbrack
			$
		\end{center}
		( as $\sqcap_{M_{2}^{\prime}} \mathrm{D}=\mathrm{D} \sqcap_{M_{1}^{\prime}} $ ), where $\sqcap_{(M_{2} \tilde{\oplus} \mathrm{Im}\mathrm{J}^{\prime} )\oplus M_{2}^{\prime}} $  denotes the projection onto\\
		$(M_{2} \tilde{\oplus} \mathrm{Im}\mathrm{J}^{\prime} ) \oplus M_{2}^{\prime}   $ along
		${\mathrm{Im}\mathrm{J}^{\prime}}^{\perp} ) \oplus N_{2}^{\prime}    $ and $\sqcap_{M_{1}^{\prime}}   $ denotes the projection onto $M_{1}^{\prime}   $ along $N_{1}^{\prime}   $. Clearly, $ (\mathbf{M}_{\tilde{\mathrm{J}^{\prime}}})_{1}  $ is onto $(M_{2} \tilde{\oplus} \mathrm{Im}\mathrm{J}^{\prime}\oplus M_{2}^{\prime}$. Now, if $ (\mathbf{M}_{\tilde{\mathrm{J}^{\prime}}})_{1}$ 
		$\left\lbrack
		\begin{array}{l}
		x  \\
		y  \\
		\end{array}
		\right \rbrack
		$ 
		$=$
		$\left\lbrack
		\begin{array}{l}
		0  \\
		0  \\
		\end{array}
		\right \rbrack
		$
		for some $x \in M_{1}, y \in H_{\mathcal{A}}   $, then $ \mathrm{D} \sqcap_{M_{1}^{\prime}} y=0 ,$ so $ y\in N_{1}^{\prime}  $ as $ \mathrm{D}_{\mid_{{M_{1}^{\prime}}}}  $ is bounded below. Also $\mathrm{F}x+ \tilde{\mathrm{J}^{\prime}}y=0  $. But, since $y \in N_{1}^{\prime},$ then $\tilde{\mathrm{J}^{\prime}}y=\mathrm{J}^{\prime}y   $, so we get $\mathrm{F}x+ {\mathrm{J}^{\prime}}y=0   $. Since $\mathrm{F}x \in  M_{2},\mathrm{J}^{\prime}y=N_{2}   $ and $M_{2} \cap  N_{2}=\lbrace 0 \rbrace  $, we get $ \mathrm{F}x= {\mathrm{J}^{\prime}}y=0   $. Since $\mathrm{F}_{\mid_{M_{1}}}   $ and $\mathrm{J}^{\prime}   $ are bounded below, we get $x=y=0   $. So $(\mathbf{M}_{\tilde{\mathrm{J}^{\prime}}})_{1}   $ is injective as well, thus an isomorphism.
		Recall next that $N_{1}\oplus \lbrace 0 \rbrace   $ and $\mathrm{Im}{\mathrm{J}^{\prime}}^{\perp} \oplus N_{2}^{\prime}  $ are finitely generated. By using the procedure of diagonalisation of $\mathbf{M}_{\tilde{\mathrm{J}^{\prime}}} $ as done in the proof of \cite[Lemma 2.7.10]{MT}, we obtain that $\mathbf{M}_{\tilde{\mathrm{J}^{\prime}}} \in \mathcal{M} \Phi (H_{\mathcal{A}} \oplus H_{\mathcal{A}})   .$\\
		Assume now that a) holds. Then there exists
		${\iota} \in  B^{a}(\mathrm{Im}\mathrm{J},N_{2})   $ s.t ${\iota}\mathrm{J}=id_{N_{2}}   .$\\
		Let $\widehat{\iota}={\iota}\mathrm{P}_{\mathrm{Im}\mathrm{J}}   $ where $\mathrm{P}_{\mathrm{Im}\mathrm{J}}   $ denote the orthogonal projection onto $ \mathrm{Im}\mathrm{J}  $. (notice that $\mathrm{Im}\mathrm{J}    $ is orthogonally complementable in $H_{\mathcal{A}}  $ since it is orthogonally complementable in $ N_{1}^{\prime} $ and $H_{\mathcal{A}}=N_{1}^{\prime}  \oplus {N_{1}^{\prime} }^{\perp}  ).$ Thus $\widehat{\iota} \in B^{a}  (H_{\mathcal{A}})   $. Consider
		$\mathbf{M}_{\widehat{\iota} }=\left\lbrack
		\begin{array}{ll}
		\mathrm{F} & {\widehat{\iota}}  \\
		0 & \mathrm{D} \\
		\end{array}
		\right \rbrack
		$.
		We claim that w.r.t. the decomposition
		$$H_{\mathcal{A}} \oplus H_{\mathcal{A}}= (M_{1} \oplus (M_{1}^{\prime} \tilde{\oplus} \mathrm{Im}\mathrm{J})) \tilde{\oplus} (N_{1} \oplus \mathrm{Im}\mathrm{J}^{\perp} ))   $$ 
		$$\downarrow  \mathbf{M}_{\widehat{\iota}}$$
		$$H_{\mathcal{A}} \oplus H_{\mathcal{A}}= (H_{\mathcal{A}} \oplus M_{2}^{\prime}) \tilde{\oplus} ( \lbrace 0 \rbrace \oplus N_{2}^{\prime})   ,$$ 
		$M_{{\iota}} $ has the matrix
		$\left\lbrack
		\begin{array}{ll}
		(\mathbf{M}_{\widehat{\iota}})_{1} & (\mathbf{M}_{\widehat{\iota}})_{2} \\
		(\mathbf{M}_{\widehat{\iota}})_{3} & (\mathbf{M}_{\widehat{\iota}})_{4} \\
		\end{array}
		\right \rbrack
		$,
		where $(\mathbf{M}_{\widehat{\iota}})_{1}   $ is an isomorphism. To see this, observe again that\\
		$(\mathbf{M}_{\widehat{\iota}})_{1} =\sqcap_{(H_{\mathcal{A}} {\oplus} M_{2}^{\prime} )} {\mathbf{M}_{\widehat{\iota}}}_{{\mid}_{M_{1} \oplus (M_{1}^{\prime} {\oplus} \mathrm{Im}\mathrm{J})}} = $
		$
		\left\lbrack
		\begin{array}{ll}
		\mathrm{F}_{{\mid}_{M_{1} }}  & {\widehat{\iota}} \\
		0 & \mathrm{D}\sqcap_{M_{1}^{\prime}} \\
		\end{array}
		\right \rbrack
		,$
		so $(\mathbf{M}_{\widehat{\iota}})_{1}   $ is obviously onto $H_{\mathcal{A}} \oplus M_{2}^{\prime}   .$\\
		Moreover, if $ (\mathbf{M}_{\widehat{\iota}})_{1}$ 
		$\left\lbrack
		\begin{array}{l}
		x  \\
		y  \\
		\end{array}
		\right \rbrack
		$ 
		$=$
		$\left\lbrack
		\begin{array}{l}
		0  \\
		0  \\
		\end{array}
		\right \rbrack
		$ for some $ x \in M_{1}   $ and $ y \in M_{1}^{\prime} \tilde{\oplus} \mathrm{Im}\mathrm{J}$, we get that $ \mathrm{D} \sqcap_{M_{1}^{\prime}}y=0    $, so $y \in \mathrm{Im}\mathrm{J}.$\\
		Hence $ \widehat{\iota}y={\iota}y  $, so, $\mathrm{F}x+\widehat{\iota}y=\mathrm{F}x+{\iota}y=0   $. Since $\mathrm{F}x \in M_{2}, {\iota}y \in N_{2}   $ and
		$M_{2} \cap N_{2} = \lbrace 0 \rbrace   $, we get $\mathrm{F}x={\iota}y=0   $. As $\mathrm{F}_{\mid_{M_{1}}}   $ and $ {\iota}  $ are bounded below, we deduce that $x=y=0$. So $  (\mathbf{M}_{\widehat{\iota}})_{1}  $, is also injective, hence
		an isomorphism. In addition, we recall that $N_{1} \oplus \mathrm{Im}\mathrm{J}^{\perp}   $ and $\lbrace 0 \rbrace \oplus N_{2}^{\prime}  $ are finitely generated, so by the same arguments as before, we deduce that $\mathbf{M}_{\widehat{\iota}}  \in \mathcal{M} \Phi(H_{\mathcal{A}} \oplus H_{\mathcal{A}})  $.
	\end{proof}
	\begin{remark}
		We know from the proofs of \cite[Theorem 2.2]{I} and \cite[Theorem 2.3]{I}, part $1) \Rightarrow 2) $ that since $$\mathrm{F} \in \mathcal{M} \Phi_{+} (H_{\mathcal{A}}),\mathrm{D} \in \mathcal{M} \Phi_{-} (H_{\mathcal{A}})   ,$$ we can find the decompositions 
		$$H_{\mathcal{A}} = M_{1} \tilde \oplus {N_{1}}\stackrel{\mathrm{F}}{\longrightarrow} N_{2}^{\perp} \oplus N_{2}= H_{\mathcal{A}} ,$$ 
		$$H_{\mathcal{A}} = {N_{1}^{\prime}}^{\perp} \oplus {N_{1}^{\prime}}\stackrel{\mathrm{D}}{\longrightarrow} M_{2}^{\prime} \tilde \oplus N_{2}^{\prime}= H_{\mathcal{A}} ,$$ 
		w.r.t. which $\mathrm{F}, \mathrm{D}$ have matrices 
		\begin{center}
			$\left\lbrack
			\begin{array}{ll}
			\mathrm{F}_{1} & 0 \\
			0 & \mathrm{F}_{4} \\
			\end{array}
			\right \rbrack
			$,
			$\left\lbrack
			\begin{array}{ll}
			\mathrm{D}_{1} & 0 \\
			0 & \mathrm{D}_{4} \\
			\end{array}
			\right \rbrack
			,$	
		\end{center}
		respectively, where $\mathrm{F}_{1},\mathrm{D}_{1} $  are isomorphisms, $N_{1}, N_{2}^{\prime}$ are finitely generated. However, in this theorem we have also the additional assumptions a) and b).	
	\end{remark}
	%
	\begin{remark}
		\cite[Theorem 3.2 ]{DDj}, part $(ii)\Rightarrow (i)$ follows as a direct consequence of Theorem \ref{t360} in the case when $X=Y=H,$ where $H$ is a Hilbert space. Indeed, if $\mathrm{F} \in \Phi_{+}(H), \mathrm{D} \in \Phi_{-}(H),$ $\ker \mathrm{D} $ and $\mathrm{Im} \mathrm{F}^{\bot}$ are isomorphic up to a finite dimensional subspace, then we may let $$M_{1}= \ker \mathrm{F}^{\bot}, N_{1}= \ker \mathrm{F}^{\bot} , {N_{2}}^{\bot}= \mathrm{Im} \mathrm{F} , N_{2} = \mathrm{Im}  \mathrm{F}^{\bot} ,  N_{1}^{\prime}= \ker \mathrm{D},$$ $$ M_{2}^{\prime}=\mathrm{Im}\mathrm{D}, N_{2}^{\prime}= \mathrm{Im} \mathrm{D}^{\bot},  N_{1}^{\prime}= \ker \mathrm{D}  .$$ Since $\ker \mathrm{D} $ and $\mathrm{Im} \mathrm{F}^{\bot} $ are isomorphic up to a finite dimensional subspace, by \cite[Definition 2.2 ]{DDj} this means that either the condition a) or the condition b) in Theorem \ref{t360} holds. By Theorem \ref{t360} it follows then that ${\mathbf{M}_{\mathrm{C}}} \in \Phi(H \oplus H).$\\	
	\end{remark}
	%
	Let $\tilde{W}(\mathrm{F},\mathrm{D})$ be the set of all $ \alpha \in Z(\mathcal{A})$ such that
	there exist decompositions
	$$H,_{\mathcal{A}} = M_{1} \tilde \oplus {N_{1}}\stackrel{\mathrm{F}-\alpha 1}{\longrightarrow} M_{2} \tilde \oplus N_{2}= H_{\mathcal{A}} ,$$
	$$H_{\mathcal{A}} ,= M_{1}^{\prime} \tilde \oplus {N_{1}^{\prime}}\stackrel{\mathrm{D}-\alpha 1}{\longrightarrow} M_{2}^{\prime} \tilde \oplus N_{2}^{\prime}= H_{\mathcal{A}} ,$$
	w.r.t. which $\mathrm{F}-\alpha 1,\mathrm{D}-\alpha 1$ have matrices
	\begin{center}
		$
		\left\lbrack
		\begin{array}{ll}
		(\mathrm{F}- \alpha 1)_{1} & 0 \\
		0 & (\mathrm{F}- \alpha 1)_{4} \\
		\end{array}
		\right \rbrack ,
		$
		$
		\left\lbrack
		\begin{array}{ll}
		(\mathrm{D}- \alpha 1)_{1} & 0 \\
		0 & (\mathrm{D}- \alpha 1)_{4} \\
		\end{array}
		\right \rbrack ,
		$
	\end{center}
	where $(\mathrm{F}- \alpha 1)_{1},(\mathrm{D}- \alpha 1)_{1}$ are isomorphisms, $N_{1}, N_{2}^{\prime}  $ are finitely generated submodules  and such that there are no closed submodules $\tilde{N_{2}} ,\tilde{\tilde{N_{2}}},\tilde{N_{1}^{\prime}} , \tilde{\tilde{N_{1}^{\prime}}}    $ with the property that $N_{2} \cong \tilde{N_{2}} ,N_{1}^{\prime} \cong \tilde{N_{1}^{\prime}}, \tilde{\tilde{N_{2}}}, \tilde{\tilde{N_{1}}}  $ $   $ are finitely generated and
	$$ (\tilde{N_{2}} \oplus \tilde{\tilde{N_{2}}} ) \cong  (\tilde{N_{2}}^{\prime} \oplus \tilde{\tilde{N_{2}}}^{\prime} ).$$
	Set $W(\mathrm{F},\mathrm{D})$ to be the set of all $\alpha \in Z(\mathcal{A}) $ such that there are no decompositions
	$$H_{\mathcal{A}} = M_{1} \tilde \oplus {N_{1}}\stackrel{\mathrm{F}-\alpha 1}{\longrightarrow} N_{2}^{\perp} \tilde \oplus N_{2}= H_{\mathcal{A}} ,$$
	$$H_{\mathcal{A}} = {N_{1}^{\prime}}^{\perp} \tilde \oplus {N_{1}^{\prime}}\stackrel{\mathrm{D}-\alpha 1}{\longrightarrow} M_{2}^{\prime} \tilde \oplus N_{2}^{\prime}= H_{\mathcal{A}} ,$$
	w.r.t. which $\mathrm{F}- \alpha 1,\mathrm{D}- \alpha 1    $ have matrices
	\begin{center}
		$
		\left\lbrack
		\begin{array}{ll}
		(\mathrm{F}- \alpha 1)_{1} & 0 \\
		0 & (\mathrm{F}- \alpha 1)_{4} \\
		\end{array}
		\right \rbrack ,
		$
		$
		\left\lbrack
		\begin{array}{ll}
		(\mathrm{D}- \alpha 1)_{1} & 0 \\
		0 & (\mathrm{D}- \alpha 1)_{4} \\
		\end{array}
		\right \rbrack ,
		$ 
	\end{center}
	where $(\mathrm{F}- \alpha 1)_{1},(\mathrm{D}- \alpha 1)_{1}   $, are isomorphisms $N_{1}, N_{2}^{\prime}  $
	are finitely generated and with the property that a) or b) in the Theorem \ref{t360} hold. Then we have the following corollary:
	\begin{corollary} \label{c310} 
		For given $ \mathrm{F} \in B^{a}  (H_{\mathcal{A}})  $ and $  \mathrm{D} \in B^{a}  (H_{\mathcal{A}}),$ 
		$$\tilde{W}(\mathrm{F},\mathrm{D})  \subseteq \bigcap_{\mathrm{C} \in B^{a}  (H_{\mathcal{A}})}   \sigma_{e}^{\mathcal{A}}({\mathbf{M}_{\mathrm{C}}^{\mathcal{A}}}) \subseteq W(\mathrm{F},\mathrm{D}) .$$	
	\end{corollary}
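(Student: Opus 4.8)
The plan is to obtain both inclusions as essentially immediate consequences of Theorem \ref{t320} and Theorem \ref{t360}, applied not to $\mathrm{F},\mathrm{D}$ themselves but to the shifted operators $\mathrm{F}-\alpha 1$ and $\mathrm{D}-\alpha 1$. The starting observation is the factorization already used in Proposition \ref{p310}, namely
$$\mathbf{M}_{\mathrm{C}}^{\mathcal{A}}-\alpha\mathrm{I}=\left[\begin{array}{ll}\mathrm{F}-\alpha 1 & \mathrm{C}\\ 0 & \mathrm{D}-\alpha 1\end{array}\right]=\mathbf{M}_{\mathrm{C}}^{\mathcal{A}}(\mathrm{F}-\alpha 1,\mathrm{D}-\alpha 1),$$
so that $\alpha\notin\sigma_{e}^{\mathcal{A}}(\mathbf{M}_{\mathrm{C}}^{\mathcal{A}})$ is literally the assertion $\mathbf{M}_{\mathrm{C}}^{\mathcal{A}}(\mathrm{F}-\alpha 1,\mathrm{D}-\alpha 1)\in\mathcal{M}\Phi(H_{\mathcal{A}}\oplus H_{\mathcal{A}})$, and the defining conditions of $\tilde{W}(\mathrm{F},\mathrm{D})$ and $W(\mathrm{F},\mathrm{D})$ are phrased with exactly the decomposition data appearing in those two theorems.

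For the inclusion $\tilde{W}(\mathrm{F},\mathrm{D})\subseteq\bigcap_{\mathrm{C}}\sigma_{e}^{\mathcal{A}}(\mathbf{M}_{\mathrm{C}}^{\mathcal{A}})$, I would fix $\alpha\in\tilde{W}(\mathrm{F},\mathrm{D})$ and pick a pair of decompositions of $\mathrm{F}-\alpha 1$ and $\mathrm{D}-\alpha 1$ witnessing this membership; in particular these are $\mathcal{M}\Phi_{+}$- and $\mathcal{M}\Phi_{-}$-decompositions, and for them no closed submodules $\tilde{N_{2}},\tilde{\tilde{N_{2}}},\tilde{N_{1}^{\prime}},\tilde{\tilde{N_{1}^{\prime}}}$ with the compatibility property of Theorem \ref{t320} exist. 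Assume toward a contradiction that $\mathbf{M}_{\mathrm{C}}^{\mathcal{A}}-\alpha\mathrm{I}\in\mathcal{M}\Phi(H_{\mathcal{A}}\oplus H_{\mathcal{A}})$ for some $\mathrm{C}\in B^{a}(H_{\mathcal{A}})$. Then Theorem \ref{t320}, applied to $\mathrm{F}-\alpha 1$ and $\mathrm{D}-\alpha 1$, forces such compatible submodules to exist for \emph{every} pair of decompositions of the stated form, in particular for the chosen witnesses; this contradiction shows $\mathbf{M}_{\mathrm{C}}^{\mathcal{A}}-\alpha\mathrm{I}\notin\mathcal{M}\Phi$ for all $\mathrm{C}$, i.e. $\alpha\in\bigcap_{\mathrm{C}}\sigma_{e}^{\mathcal{A}}(\mathbf{M}_{\mathrm{C}}^{\mathcal{A}})$.

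For the inclusion $\bigcap_{\mathrm{C}}\sigma_{e}^{\mathcal{A}}(\mathbf{M}_{\mathrm{C}}^{\mathcal{A}})\subseteq W(\mathrm{F},\mathrm{D})$, I would argue contrapositively: if $\alpha\notin W(\mathrm{F},\mathrm{D})$, then by definition of $W(\mathrm{F},\mathrm{D})$ there are decompositions
$$H_{\mathcal{A}}=M_{1}\tilde\oplus N_{1}\stackrel{\mathrm{F}-\alpha 1}{\longrightarrow}N_{2}^{\perp}\oplus N_{2}=H_{\mathcal{A}},\qquad H_{\mathcal{A}}={N_{1}^{\prime}}^{\perp}\oplus N_{1}^{\prime}\stackrel{\mathrm{D}-\alpha 1}{\longrightarrow}M_{2}^{\prime}\tilde\oplus N_{2}^{\prime}=H_{\mathcal{A}}$$
of exactly the form required in Theorem \ref{t360}: diagonal blocks isomorphisms, $N_{1}$ and $N_{2}^{\prime}$ finitely generated, and condition a) or b) of Theorem \ref{t360} holding. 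In particular $\mathrm{F}-\alpha 1\in\mathcal{M}\Phi_{+}(H_{\mathcal{A}})$ and $\mathrm{D}-\alpha 1\in\mathcal{M}\Phi_{-}(H_{\mathcal{A}})$, so Theorem \ref{t360} applies to $\mathrm{F}-\alpha 1,\mathrm{D}-\alpha 1$ and produces some $\mathrm{C}\in B^{a}(H_{\mathcal{A}})$ with $\mathbf{M}_{\mathrm{C}}^{\mathcal{A}}-\alpha\mathrm{I}\in\mathcal{M}\Phi(H_{\mathcal{A}}\oplus H_{\mathcal{A}})$. Hence $\alpha\notin\sigma_{e}^{\mathcal{A}}(\mathbf{M}_{\mathrm{C}}^{\mathcal{A}})$ for that $\mathrm{C}$, so $\alpha\notin\bigcap_{\mathrm{C}}\sigma_{e}^{\mathcal{A}}(\mathbf{M}_{\mathrm{C}}^{\mathcal{A}})$, as required.

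The computations involved are negligible; the only thing to be careful about is the matching of quantifiers between the two auxiliary sets and the two theorems. By construction $\tilde{W}(\mathrm{F},\mathrm{D})$ is exactly the set of $\alpha$ for which \emph{some} admissible pair of decompositions of $\mathrm{F}-\alpha 1,\mathrm{D}-\alpha 1$ violates the conclusion of Theorem \ref{t320}, while Theorem \ref{t320} says that $\mathcal{A}$-Fredholmness of even one $\mathbf{M}_{\mathrm{C}}^{\mathcal{A}}-\alpha\mathrm{I}$ makes that conclusion hold for \emph{all} admissible pairs — so the first inclusion is forced; dually, the complement of $W(\mathrm{F},\mathrm{D})$ is precisely the set of $\alpha$ at which the hypotheses of Theorem \ref{t360} are met by $\mathrm{F}-\alpha 1,\mathrm{D}-\alpha 1$, and the theorem then hands us the required $\mathrm{C}$. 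Thus the main (and essentially only) point to check is that "admissible decomposition" in the definitions of $\tilde{W}$ and $W$ coincides verbatim with the decomposition data in the statements of Theorems \ref{t320} and \ref{t360}, and that the witnessing pairs are genuinely $\mathcal{M}\Phi_{+}$- resp. $\mathcal{M}\Phi_{-}$-decompositions so that those theorems are applicable.
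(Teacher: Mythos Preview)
Your proposal is correct and matches the paper's approach: the paper states Corollary \ref{c310} without proof, as an immediate consequence of Theorems \ref{t320} and \ref{t360} applied to $\mathrm{F}-\alpha 1$ and $\mathrm{D}-\alpha 1$, and your write-up fills in precisely those details with the right quantifier matching.
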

	\begin{theorem} \label{t311} 
		Suppose  $ {\mathbf{M}_{\mathrm{C}}^{\mathcal{A}}} \in \mathcal{M} \Phi_{-}(H_{\mathcal{A}} {\oplus} H_{\mathcal{A}}  )   $  for some  $ \mathrm{C} \in B^{a}(H_{\mathcal{A}})  .$ Then  $\mathrm{D} \in \mathcal{M} \Phi_{-}(H_{\mathcal{A}})  $  and in addition the following statement holds:\\
		Either  $ \mathrm{F} \in \mathcal{M} \Phi_{-}(H_{\mathcal{A}}) $  or there exists decompositions  
		$$H_{\mathcal{A}} \oplus H_{\mathcal{A}} = M_{1} \tilde \oplus {N_{1}}\stackrel{\mathrm{F}^{\prime}}{\longrightarrow}   M_{2} \tilde \oplus N_{2}= H_{\mathcal{A}} \oplus H_{\mathcal{A}},  $$
		$$H_{\mathcal{A}} \oplus H_{\mathcal{A}} = M_{1}^{\prime} \tilde \oplus {N_{1}^{\prime}}\stackrel{\mathrm{D}^{\prime}}{\longrightarrow}  M_{2}^{\prime} \tilde \oplus N_{2}^{\prime}= H_{\mathcal{A}} \oplus H_{\mathcal{A}},   $$  
		w.r.t. which  
		$\mathrm{F}^{\prime},\mathrm{D}^{\prime}  $   have the matrices
		$\left\lbrack
		\begin{array}{ll}
		\mathrm{F}_{1}^{\prime} & 0 \\
		0 & \mathrm{F}_{4}^{\prime} \\
		\end{array}
		\right \rbrack ,
		$
		$\left\lbrack
		\begin{array}{ll}
		\mathrm{D}_{1}^{\prime} & 0 \\
		0 & \mathrm{D}_{4}^{\prime} \\
		\end{array}
		\right \rbrack ,
		$  
		where  $ \mathrm{F}_{1}^{\prime},\mathrm{D}_{1}^{\prime}    $  are isomorphisms,  $ N_{2}^{\prime}   $  is finitely generated,  $N_{1},N_{2}, N_{1}^{\prime}  $  are closed, but \underline{not} finitely generated, and  $ M_{2} \cong  M_{1}^{\prime}, N_{2} \cong  N_{1}^{\prime}   .$\\	
	\end{theorem}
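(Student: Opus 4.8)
The plan is to imitate the proof of Theorem~\ref{t320}, systematically replacing $\mathcal{M}\Phi$ by $\mathcal{M}\Phi_{-}$ and \cite[Theorem 2.2]{I} by \cite[Theorem 2.3]{I}. As there, factor $\mathbf{M}_{\mathrm{C}}^{\mathcal{A}}=\mathrm{D}'\mathrm{C}'\mathrm{F}'$, where $\mathrm{F}'=\mathrm{F}\oplus 1$, $\mathrm{D}'=1\oplus\mathrm{D}$, and $\mathrm{C}'$ is the invertible operator with diagonal entries $1$ and upper off-diagonal entry $\mathrm{C}$. Since $\mathbf{M}_{\mathrm{C}}^{\mathcal{A}}\in\mathcal{M}\Phi_{-}(H_{\mathcal{A}}\oplus H_{\mathcal{A}})$, it is right invertible modulo $K(H_{\mathcal{A}}\oplus H_{\mathcal{A}})$ by \cite[Theorem 2.3]{I}; as $\mathrm{D}'$ is the left factor, $\mathrm{D}'$ is then right invertible modulo compacts, so $\mathrm{D}'\in\mathcal{M}\Phi_{-}(H_{\mathcal{A}}\oplus H_{\mathcal{A}})$, and restricting the corresponding relation $\mathrm{D}'T\equiv I$ (mod compacts) to the second summand, via an adjointable isomorphism $H_{\mathcal{A}}\cong H_{\mathcal{A}}\oplus H_{\mathcal{A}}$, yields $\mathrm{D}\in\mathcal{M}\Phi_{-}(H_{\mathcal{A}})$ by \cite[Theorem 2.3]{I}. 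This settles the first assertion.

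If $\mathrm{F}\in\mathcal{M}\Phi_{-}(H_{\mathcal{A}})$ we are done, so assume $\mathrm{F}\notin\mathcal{M}\Phi_{-}(H_{\mathcal{A}})$, hence $\mathrm{F}'\notin\mathcal{M}\Phi_{-}(H_{\mathcal{A}}\oplus H_{\mathcal{A}})$; note that then $\mathrm{D}\notin\mathcal{M}\Phi(H_{\mathcal{A}})$, for otherwise $\mathrm{D}'$ would be invertible modulo compacts and $\mathrm{F}'$, hence $\mathrm{F}$, right invertible modulo compacts, a contradiction. Next establish and apply an $\mathcal{M}\Phi_{-}$-version of Lemma~\ref{l210} --- proved by the same argument, using the proof of \cite[Theorem 2.7.6]{MT} in the lower semi-$\mathcal{A}$-Fredholm situation together with part $1)\Rightarrow 2)$ of \cite[Theorem 2.3]{I} --- to $\mathbf{M}_{\mathrm{C}}^{\mathcal{A}}=\mathrm{D}'(\mathrm{C}'\mathrm{F}')$; combined with the invertibility of $\mathrm{C}'$ this produces a chain
$$H_{\mathcal{A}}\oplus H_{\mathcal{A}}=M_{1}\tilde{\oplus} N_{1}\stackrel{\mathrm{F}'}{\longrightarrow}R_{1}\tilde{\oplus} R_{2}\stackrel{\mathrm{C}'}{\longrightarrow}\mathrm{C}'(R_{1})\tilde{\oplus}\mathrm{C}'(R_{2})\stackrel{\mathrm{D}'}{\longrightarrow}M_{2}'\tilde{\oplus} N_{2}'=H_{\mathcal{A}}\oplus H_{\mathcal{A}}$$
with respect to which $\mathrm{F}'$ is block diagonal with isomorphism corner $\mathrm{F}'|_{M_{1}}\colon M_{1}\to R_{1}$, $\mathrm{C}'$ is block diagonal with isomorphism corners, $\mathrm{D}'$ is upper triangular with isomorphism corner, and $N_{2}'$ is finitely generated; diagonalising $\mathrm{D}'$ by the isomorphism $\mathrm{W}$ of the proof of Theorem~\ref{t320} makes $\mathrm{D}'$ block diagonal with isomorphism corner with respect to $\mathrm{W}\mathrm{C}'(R_{1})\tilde{\oplus}\mathrm{W}\mathrm{C}'(R_{2})\to M_{2}'\tilde{\oplus} N_{2}'$.

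Now read off the decompositions: for $\mathrm{F}'$ take $M_{1}\tilde{\oplus} N_{1}\to R_{1}\tilde{\oplus} R_{2}$, so $M_{2}=R_{1}$, $N_{2}=R_{2}$; for $\mathrm{D}'$ take $\mathrm{W}\mathrm{C}'(R_{1})\tilde{\oplus}\mathrm{W}\mathrm{C}'(R_{2})\to M_{2}'\tilde{\oplus} N_{2}'$, so $M_{1}'=\mathrm{W}\mathrm{C}'(R_{1})$, $N_{1}'=\mathrm{W}\mathrm{C}'(R_{2})$. Since $\mathrm{W}\mathrm{C}'$ is an isomorphism of $H_{\mathcal{A}}\oplus H_{\mathcal{A}}$, at once $M_{2}=R_{1}\cong M_{1}'$ and $N_{2}=R_{2}\cong N_{1}'$, while $N_{2}'$ is finitely generated. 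For the non-finiteness clauses, $N_{2}=R_{2}$ cannot be finitely generated --- otherwise the $\mathrm{F}'$-decomposition would exhibit $\mathrm{F}'\in\mathcal{M}\Phi_{-}(H_{\mathcal{A}}\oplus H_{\mathcal{A}})$ and hence $\mathrm{F}\in\mathcal{M}\Phi_{-}(H_{\mathcal{A}})$, contrary to assumption --- so $N_{1}'\cong R_{2}$ is not finitely generated either, and $N_{1}$ is not finitely generated provided $\mathbf{M}_{\mathrm{C}}^{\mathcal{A}}\notin\mathcal{M}\Phi(H_{\mathcal{A}}\oplus H_{\mathcal{A}})$ (the domain complement in any $\mathcal{M}\Phi_{-}$-decomposition of $\mathbf{M}_{\mathrm{C}}^{\mathcal{A}}$ being then non-finitely generated). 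The main obstacle is precisely this last step: one must choose the $\mathcal{M}\Phi_{-}$-decomposition of $\mathbf{M}_{\mathrm{C}}^{\mathcal{A}}$ with non-finitely generated domain complement, which is impossible exactly when $\mathbf{M}_{\mathrm{C}}^{\mathcal{A}}\in\mathcal{M}\Phi(H_{\mathcal{A}}\oplus H_{\mathcal{A}})$; in that borderline case (which is still compatible with $\mathrm{F}\notin\mathcal{M}\Phi_{-}$, and in which Theorem~\ref{t320} gives $\mathrm{F}\in\mathcal{M}\Phi_{+}(H_{\mathcal{A}})\setminus\mathcal{M}\Phi(H_{\mathcal{A}})$ and $\mathrm{D}\in\mathcal{M}\Phi_{-}(H_{\mathcal{A}})\setminus\mathcal{M}\Phi(H_{\mathcal{A}})$) one instead builds the $\mathrm{F}'$- and $\mathrm{D}'$-decompositions directly from $\mathcal{M}\Phi_{+}$- and $\mathcal{M}\Phi_{-}$-decompositions of $\mathrm{F}$ and $\mathrm{D}$, using the rearrangement-inside-$L_{5}(H_{\mathcal{A}})\cong H_{\mathcal{A}}$ device and the submodule relations of Theorem~\ref{t320} to secure $M_{2}\cong M_{1}'$ and $N_{2}\cong N_{1}'$.
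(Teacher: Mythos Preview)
Your core argument---factor $\mathbf{M}_{\mathrm{C}}^{\mathcal{A}}=\mathrm{D}'\mathrm{C}'\mathrm{F}'$, build a chain of decompositions from an $\mathcal{M}\Phi_{-}$-decomposition of $\mathbf{M}_{\mathrm{C}}^{\mathcal{A}}$ using the technique of Lemma~\ref{l210} / \cite[Theorem 2.2]{I}, diagonalise $\mathrm{D}'$ by the isomorphism $\mathrm{W}$, and read off $M_{2}=R_{1}\cong \mathrm{W}\mathrm{C}'(R_{1})=M_{1}'$, $N_{2}=R_{2}\cong \mathrm{W}\mathrm{C}'(R_{2})=N_{1}'$---is exactly the route the paper takes. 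Your Calkin-algebra deduction of $\mathrm{D}\in\mathcal{M}\Phi_{-}(H_{\mathcal{A}})$ is a minor variant (the paper instead obtains $\mathrm{D}'\in\mathcal{M}\Phi_{-}$ from the constructed decomposition and then passes to $\mathrm{D}$ as in Theorem~\ref{t320}); both are fine.

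Where you go further than the paper is the ``$N_{1}$ not finitely generated'' clause. You correctly observe that the chain starts from an $\mathcal{M}\Phi_{-}$-decomposition of $\mathbf{M}_{\mathrm{C}}^{\mathcal{A}}$ with $N_{2}'$ finitely generated, and that if $\mathbf{M}_{\mathrm{C}}^{\mathcal{A}}$ happens to be in $\mathcal{M}\Phi$ then $N_{1}$ is forced to be finitely generated too. The paper's own proof simply stops after showing $R_{2}$ is not finitely generated and recording the isomorphisms $R_{i}\cong \mathrm{W}\mathrm{C}'(R_{i})$; it does not treat the $N_{1}$ clause at all. So the issue you flag is real, and the paper does not resolve it either.

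Your proposed patch for the borderline case $\mathbf{M}_{\mathrm{C}}^{\mathcal{A}}\in\mathcal{M}\Phi$, however, is too sketchy to stand. The Theorem~\ref{t320} machinery yields $N_{2}\cong N_{1}'$ only \emph{up to finitely generated summands} (that is the content of the $\tilde{\tilde{N}}$'s there), not the exact isomorphisms $M_{2}\cong M_{1}'$, $N_{2}\cong N_{1}'$ demanded by the present statement; and the $\mathcal{M}\Phi_{+}$-decomposition of $\mathrm{F}$ you invoke has $N_{1}$ finitely generated by definition, so invoking it does not on its face produce the required non-finitely-generated $N_{1}$. If you want to complete this clause, a cleaner route is to stay inside the chain construction but first enlarge the initial $N_{1}$ (shrink $M_{1}$ inside $N_{1}^{\perp}$ by splitting off a copy of $H_{\mathcal{A}}$) so that the starting decomposition of $\mathbf{M}_{\mathrm{C}}^{\mathcal{A}}$ already has $N_{1}$ not finitely generated; the chain then propagates this automatically while preserving $N_{2}'$ finitely generated.
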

	\begin{proof}
		If  $ {\mathbf{M}_{\mathrm{C}}^{\mathcal{A}}} \in \mathcal{M} \Phi_{-}(H_{\mathcal{A}} {\oplus} H_{\mathcal{A}}  )     ,$  then there exists a decomposition
		$$H_{\mathcal{A}} \oplus H_{\mathcal{A}} = M_{1} \oplus {N_{1}}\stackrel{{\mathbf{M}_{\mathrm{C}}^{\mathcal{A}}}}{\longrightarrow} M_{2} \tilde \oplus N_{2}= H_{\mathcal{A}} \tilde \oplus H_{\mathcal{A}}  $$ 
		w.r.t. which  $ \mathbf{M}_{C}  $  has the matrix
		$\left\lbrack
		\begin{array}{ll}
		({\mathbf{M}_{\mathrm{C}}^{\mathcal{A}}} )_{1} & 0 \\
		0 &  ({\mathbf{M}_{\mathrm{C}}^{\mathcal{A}}} )_{4} \\
		\end{array}
		\right \rbrack ,
		$ 
		where  $ ({\mathbf{M}_{\mathrm{C}}^{\mathcal{A}}} )_{1}   $  is an isomorphism and  $ N_{2}  $  is finitely generated. By the part of \cite[Theorem 2.3]{I}, part $1) \Rightarrow 2)$ we may assume that  $ M_{1}=N_{1}^{\bot}  $ . Hence  $ \mathrm{F}_{\mid_{M_{1}}}^{\prime}  $  is adjointable. Since  $\mathrm{F}_{\mid_{M_{1}}}^{\prime}    $  can be viewed as an operator in  $B^{a}(M_{1},(\mathrm{D}^{\prime} \mathrm{C}^{\prime} )^{-1}(M_{2}))   $, as $M_{1}$ is orthogonally complementable, \\
		by \cite[Theorem 2.3.3.]{MT}, $ \mathrm{F}^{\prime}(M_{1})  $ is orthogonally complementable in $(\mathrm{D}^{\prime} \mathrm{C}^{\prime} )^{-1}(M_{2}).$ By the same arguments as in the proof of \cite[Theorem 2.2]{I} part $2)\Rightarrow 1)$ we deduce that there exists a chain of decompositions  
		$$M_{1} \tilde \oplus {N_{1}}\stackrel{\mathrm{F}^{\prime}}{\longrightarrow}  R_{1} \tilde \oplus {R_{2}}\stackrel{\mathrm{C}^{\prime}}{\longrightarrow}  \mathrm{C}^{\prime}(R_{1}) \tilde \oplus \mathrm{C}^{\prime}({R_{2}})\stackrel{\mathrm{D}^{\prime}}{\longrightarrow}  M_{2} \tilde \oplus {N_{2}}$$  
		w.r.t. which  $\mathrm{F}^{\prime},\mathrm{C}^{\prime},\mathrm{D}^{\prime}   $  have matrices
		$\left\lbrack
		\begin{array}{ll}
		\mathrm{F}_{1}^{\prime} & 0 \\
		0 & \mathrm{F}_{4}^{\prime} \\
		\end{array}
		\right \rbrack ,
		$
		$\left\lbrack
		\begin{array}{ll}
		\mathrm{C}_{1}^{\prime} & 0 \\
		0 & \mathrm{C}_{4}^{\prime} \\
		\end{array}
		\right \rbrack ,
		$ 
		$\left\lbrack
		\begin{array}{ll}
		\mathrm{D}_{1}^{\prime} & \mathrm{D}_{2}^{\prime} \\
		0 & \mathrm{D}_{4}^{\prime} \\
		\end{array}
		\right \rbrack ,
		$\\
		where  $\mathrm{F}_{1}^{\prime},\mathrm{C}_{1}^{\prime}, \mathrm{C}_{4}^{\prime}, \mathrm{D}_{1}^{\prime}  $  are isomorphisms. Hence  $ \mathrm{D}^{\prime}    $  has the matrix
		$\left\lbrack
		\begin{array}{ll}
		\mathrm{D}_{1}^{\prime} & 0 \\
		0 & \tilde{\mathrm{D}}_{4}^{\prime} \\
		\end{array}
		\right \rbrack ,
		$ 
		w.r.t. the decomposition
		$$H_{\mathcal{A}} \oplus H_{\mathcal{A}} =\mathrm{W}\mathrm{C}^{\prime}(R_{1})  \tilde \oplus \mathrm{W}\mathrm{C}^{\prime}(R_{2})\stackrel{\mathrm{D}^{\prime}}{\longrightarrow}  M_{2} \tilde \oplus {N_{2}} =H_{\mathcal{A}} \oplus H_{\mathcal{A}} ,$$  
		where  $ \mathrm{W}  $  is an isomorphism. It follows that  $ \mathrm{D}^{\prime} \in \mathcal{M} \Phi_{-}(H_{\mathcal{A}} \tilde \oplus H_{\mathcal{A}})  $,  as  $ N_{2}  $  is finitely generated. Hence  $\mathrm{D} \in \mathcal{M} \Phi_{-}(H_{\mathcal{A}} )   $  (by the same arguments as in the proof of Theorem \ref{t320}). Next, assume that  $\mathrm{F} \notin \mathcal{M} \Phi_{-}(H_{\mathcal{A}})   ,$ then\\
		
		$\mathrm{F}^{\prime} \notin \mathcal{M} \Phi_{-}(H_{\mathcal{A}} \oplus H_{\mathcal{A}})   .$ Therefore  $ R_{2}  $  can \underline{not} be finitely generated (otherwise  $\mathrm{F}^{\prime}   $  would be in  $ \mathcal{M} \Phi_{-}(H_{\mathcal{A}} \oplus H_{\mathcal{A}})    $ ). Now,  $R_{1} \cong \mathrm{W}\mathrm{C}^{\prime}(R_{1}),R_{2}= \mathrm{W}\mathrm{C}^{\prime}(R_{2}).$
	\end{proof}
	\begin{remark}
		In case of ordinary Hilbert spaces, \cite[Theorem 4.4 ]{DDj}  part  $2)\Rightarrow 3)   $ follows as a corollary from Theorem \ref{t311}. Indeed, suppose that  $\mathrm{D} \in  B(H)    $  and that  $\mathrm{F} \in  B(H)   $  (where $H$ is a Hilbert space). If  $\ker \mathrm{D} \prec {\mathrm{Im}\mathrm{F}^{\perp}}$,  this means by \cite[Remark 4.4 ]{DDj} that  $\dim $ $\ker  \mathrm{D} < \infty.   $ So, if (2) in \cite[Theorem 4.4 ]{DDj} holds, that is  $ {\mathbf{M}_{\mathrm{C}}} \in \Phi_{-}(H \oplus H )  $  for some  $ \mathrm{C} \in B(H)  $,  then by Theorem \ref{t311}  $ \mathrm{D} \in \Phi_{-}(H)  $  and either  $\mathrm{F} \in  \Phi_{-}(H)   $  or there exist decompositions
		$$ H \oplus H = M_{1} \tilde \oplus {N_{1}}\stackrel{\mathrm{F}^{\prime}}{\longrightarrow}  M_{2} \tilde \oplus N_{2}= H  \oplus H,$$
		$$ H \oplus H = M_{1}^{\prime} \tilde \oplus {N_{1}^{\prime}}\stackrel{\mathrm{D}^{\prime}}{\longrightarrow}  M_{2}^{\prime} \tilde \oplus N_{2}^{\prime}= H  \oplus H,$$
		which satisfy the conditions described in Theorem \ref{t311}. In particular  $ N_{2},N_{1}^{\prime}  $  are infinite dimensional whereas  $N_{2}^{\prime}   $  is finite dimensional. Suppose that  $\mathrm{F} \notin \Phi_{-}(H)    $  and that the decompositions above exist. Observe that   $\ker \mathrm{D}^{\prime}=\lbrace 0 \rbrace \oplus  \ker  \mathrm{D}.$  Hence, if dim   $\ker \mathrm{D} < \infty   $,  then dim $\ker \mathrm{D}^{\prime} < \infty   $ . Since  $ \mathrm{D}_{\mid_{M_{1}^{\prime}}}^{\prime}  $  is an isomorphism, by the same arguments as in the proof of \cite[Proposition 3.6.8 ]{MT} one can deduce that   $\ker \mathrm{D}^{\prime} \subseteq N_{1}^{\prime}   $ . Assume that   $\dim \ker  \mathrm{D} =\dim \ker  \mathrm{D}^{\prime}    < \infty $   and let  $\tilde{N_{1}}^{\prime}   $  be the orthogonal complement of   $\ker  \mathrm{D}^{\prime}   $ in  ${N_{1}}^{\prime}    ,$ that is  $N_{1}^{\prime}  = \ker  \mathrm{D}^{\prime} \oplus \tilde N_{1}^{\prime}   .$ Now, since   $\mathrm{Im} \mathrm{D}^{\prime}   $  is closed as  $\mathrm{D}^{\prime} \in \mathcal{M} \Phi_{-}(H \oplus H)   ,$  then  $\mathrm{D}_{\mid_{\tilde N_{1}^{\prime}}}^{\prime}   $  is an isomorphism. Since   $\dim N_{1}^{\prime}= \infty $  and   $\dim \ker  \mathrm{D}^{\prime} < \infty   $ , we have   $\dim  N_{1}^{\prime}= \infty   $ . Hence  $\mathrm{D}^{\prime} (\tilde N_{1}^{\prime} )   $  is infinite dimensional subspace of  $ N_{2}^{\prime}  .$ This is a contradiction since   $\dim N_{2}^{\prime}$ is finite. Thus, if  $ \mathrm{F} \notin  \Phi_{-}(H),$  we must have that $\ker  \mathrm{D}$ is infinite dimensional. Hence, we deduce, as a corollary, \cite[Theorem 4.4 ]{DDj} in case when  $X=Y=H ,$  where $H$ is a Hilbert space. In this case, part $(3b)$ in \cite[Theorem 4.4 ]{DDj}  could be reduced to the following statement: Either  $ \mathrm{F} \in  \Phi_{-}(H)   $  or   $\dim$ $\ker \mathrm{D}= \infty .$	
	\end{remark}
	\begin{theorem} \label{t313} 
		Let  $ \mathrm{F},\mathrm{D} \in B^{a}(H_{\mathcal{A}})  $  and suppose that  $\mathrm{D} \in \mathcal{M} \Phi_{-}(H_{\mathcal{A}})     $  and either  $\mathrm{F} \in \mathcal{M} \Phi_{-}(H_{\mathcal{A}})     $  or that there exist decompositions
		$$H_{\mathcal{A}} = M_{1} \tilde \oplus {N_{1}}\stackrel{\mathrm{F}}{\longrightarrow} N_{2}^{\bot} \tilde \oplus N_{2}= H_{\mathcal{A}}   ,$$ 
		$$H_{\mathcal{A}} = {N_{1}^{\prime}}^{\bot} \tilde \oplus {N_{1}^{\prime}}\stackrel{\mathrm{D}}{\longrightarrow} M_{2}^{\prime} \tilde \oplus N_{2}^{\prime}= H_{\mathcal{A}} ,$$ 
		w.r.t. which $\mathrm{F}, \mathrm{D}$ have the matrices
		$\left\lbrack
		\begin{array}{ll}
		\mathrm{F}_{1} & 0 \\
		0 & \mathrm{F}_{4} \\
		\end{array}
		\right \rbrack ,
		$
		$\left\lbrack
		\begin{array}{ll}
		\mathrm{D}_{1} & 0 \\
		0 & \mathrm{D}_{4} \\
		\end{array}
		\right \rbrack 
		,$  
		respectively, where $\mathrm{F}_{1},\mathrm{D}_{1}$ are isomorphisms $  N_{2}^{\prime} ,$  is finitely generated and that there exists some\\
		$ \iota \in B^{a} ( N_{2},N_{1}^{\prime} ) $  such that $\iota$ is an isomorphism onto its image in  $N_{1}^{\prime}   $ . Then  ${\mathbf{M}_{\mathrm{C}}^{\mathcal{A}}} \in \mathcal{M} \Phi_{-}(H_{\mathcal{A}}  \oplus H_{\mathcal{A}} )   $  for some  $ \mathrm{C} \in B^{a}(H_{\mathcal{A}})  .$	
	\end{theorem}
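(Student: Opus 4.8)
The plan is to follow the argument behind condition a) of Theorem~\ref{t360}, exploiting that here we only need the resulting $2\times2$ operator matrix to be \emph{lower} semi-$\mathcal{A}$-Fredholm, so that only the codomain complement of its $\mathcal{M}\Phi_{-}$-decomposition has to be finitely generated. Accordingly I would split into the two cases offered by the hypothesis.

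If $\mathrm{F}\in\mathcal{M}\Phi_{-}(H_{\mathcal{A}})$, take $\mathrm{C}=0$, so that ${\mathbf{M}_{0}^{\mathcal{A}}}$ is the block-diagonal operator $\mathrm{diag}(\mathrm{F},\mathrm{D})$. Choosing $\mathcal{M}\Phi_{-}$-decompositions of $\mathrm{F}$ and of $\mathrm{D}$ (which exist, both lying in $\mathcal{M}\Phi_{-}(H_{\mathcal{A}})$) and forming their direct sum gives a decomposition of $H_{\mathcal{A}}\oplus H_{\mathcal{A}}$ with respect to which $\mathrm{diag}(\mathrm{F},\mathrm{D})$ is block diagonal with isomorphic first block and finitely generated codomain complement, i.e.\ an $\mathcal{M}\Phi_{-}$-decomposition; hence ${\mathbf{M}_{0}^{\mathcal{A}}}\in\mathcal{M}\Phi_{-}(H_{\mathcal{A}}\oplus H_{\mathcal{A}})$. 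Equivalently one uses the description of $\mathcal{M}\Phi_{-}(H_{\mathcal{A}})$ from \cite{I} as the operators right invertible modulo $K(H_{\mathcal{A}})$ and assembles one-sided inverses of $\mathrm{F}$ and $\mathrm{D}$ into one for $\mathrm{diag}(\mathrm{F},\mathrm{D})$.

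In the remaining case, since $\iota\in B^{a}(N_{2},N_{1}^{\prime})$ is an isomorphism onto its image, $\mathrm{Im}\,\iota$ is closed in $N_{1}^{\prime}$, hence orthogonally complementable there by \cite[Theorem 2.3.3]{MT}; as $N_{1}^{\prime}$ is orthogonally complementable in $H_{\mathcal{A}}$ (it carries the $\perp$-decomposition of $\mathrm{D}$), $\mathrm{Im}\,\iota$ is orthogonally complementable in $H_{\mathcal{A}}$, and we write $N_{1}^{\prime}=\mathrm{Im}\,\iota\oplus L$. Put $\mathrm{C}=\iota^{-1}\mathrm{P}_{\mathrm{Im}\,\iota}\in B^{a}(H_{\mathcal{A}})$, where $\iota^{-1}\colon\mathrm{Im}\,\iota\to N_{2}$ is the inverse isomorphism and $\mathrm{P}_{\mathrm{Im}\,\iota}$ the orthogonal projection of $H_{\mathcal{A}}$ onto $\mathrm{Im}\,\iota$; this plays the role of $\widehat{\iota}$ in the proof of Theorem~\ref{t360}, the present $\iota$ playing the role of $\mathrm{J}$ there. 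With $M_{2}=N_{2}^{\perp}$ and $M_{1}^{\prime}={N_{1}^{\prime}}^{\perp}$, I would check that with respect to the decompositions
$$H_{\mathcal{A}}\oplus H_{\mathcal{A}}=\bigl(M_{1}\oplus(M_{1}^{\prime}\,\tilde\oplus\,\mathrm{Im}\,\iota)\bigr)\,\tilde\oplus\,(N_{1}\oplus L)$$
$$\downarrow\ {\mathbf{M}_{\mathrm{C}}^{\mathcal{A}}}$$
$$H_{\mathcal{A}}\oplus H_{\mathcal{A}}=\bigl(H_{\mathcal{A}}\oplus M_{2}^{\prime}\bigr)\,\tilde\oplus\,(\{0\}\oplus N_{2}^{\prime})$$
the operator ${\mathbf{M}_{\mathrm{C}}^{\mathcal{A}}}$ has a block matrix whose $(1,1)$-entry is the compression $\left\lbrack\begin{array}{ll}\mathrm{F}_{\mid_{M_{1}}}&\mathrm{C}\\0&\mathrm{D}\sqcap_{M_{1}^{\prime}}\end{array}\right\rbrack$, and that this entry is an isomorphism. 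Surjectivity onto $H_{\mathcal{A}}\oplus M_{2}^{\prime}$ is clear since $\mathrm{F}_{1}$ carries $M_{1}$ onto $N_{2}^{\perp}$, $\iota^{-1}$ carries $\mathrm{Im}\,\iota$ onto $N_{2}$ and $\mathrm{D}_{1}$ carries $M_{1}^{\prime}$ onto $M_{2}^{\prime}$ (the compression onto $H_{\mathcal{A}}\oplus M_{2}^{\prime}$ discarding the $N_{2}^{\prime}$-component of $\mathrm{D}$ on $\mathrm{Im}\,\iota$); injectivity follows because if the compression annihilates $(x,\,y_{1}+y_{2})$ with $x\in M_{1}$, $y_{1}\in M_{1}^{\prime}$, $y_{2}\in\mathrm{Im}\,\iota$, then $\mathrm{D}_{1}y_{1}=0$ forces $y_{1}=0$, while $\mathrm{F}x+\iota^{-1}y_{2}=0$ with $\mathrm{F}x\in N_{2}^{\perp}$, $\iota^{-1}y_{2}\in N_{2}$ and $N_{2}^{\perp}\cap N_{2}=\{0\}$ forces $\mathrm{F}x=\iota^{-1}y_{2}=0$, whence $x=y_{2}=0$ since $\mathrm{F}_{1}$ and $\iota^{-1}$ are bounded below. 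As the codomain complement $\{0\}\oplus N_{2}^{\prime}$ is finitely generated, applying the diagonalization procedure of \cite[Lemma 2.7.10]{MT} to ${\mathbf{M}_{\mathrm{C}}^{\mathcal{A}}}$ — whose invertible shears fix the two direct summands and hence preserve finite generation of the codomain complement — yields ${\mathbf{M}_{\mathrm{C}}^{\mathcal{A}}}\in\mathcal{M}\Phi_{-}(H_{\mathcal{A}}\oplus H_{\mathcal{A}})$.

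The only genuine difference from the proof of Theorem~\ref{t360}, and the reason the hypotheses here are weaker, is that the domain complement $N_{1}\oplus L$ need not be finitely generated (we do not assume $N_{1}$ or $(\mathrm{Im}\,\iota)^{\perp}$ finitely generated), so one obtains lower semi-$\mathcal{A}$-Fredholmness but not $\mathcal{A}$-Fredholmness. The routine points to confirm are the adjointability of $\mathrm{C}$ and of the compressions — which follow from the orthogonal complementability of $\mathrm{Im}\,\iota$, $N_{2}$ and $N_{1}^{\prime}$, exactly as in Theorem~\ref{t360} — and that the listed submodules genuinely form direct-sum decompositions of $H_{\mathcal{A}}\oplus H_{\mathcal{A}}$, which is immediate from the given decompositions of $\mathrm{F}$ and $\mathrm{D}$; the main conceptual step is recognizing the $\iota^{-1}$-construction as the correct choice of $\mathrm{C}$.
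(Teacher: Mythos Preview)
Your proposal is correct and follows essentially the same route as the paper's proof: the paper also uses the orthogonal complementability of $\mathrm{Im}\,\iota$ in $N_{1}^{\prime}$ via \cite[Theorem 2.3.3]{MT}, sets $\mathrm{C}=\mathrm{J}\mathrm{P}_{\mathrm{Im}\,\iota}$ with $\mathrm{J}$ the inverse of $\iota$ (your $\iota^{-1}$), and checks that the $(1,1)$-block of $\mathbf{M}_{\mathrm{C}}^{\mathcal{A}}$ with respect to the very same pair of decompositions you wrote down is an isomorphism, then invokes finite generation of $N_{2}^{\prime}$ and the arguments of Theorem~\ref{t360}. Your treatment is in fact slightly more complete, since you handle the case $\mathrm{F}\in\mathcal{M}\Phi_{-}(H_{\mathcal{A}})$ explicitly (with $\mathrm{C}=0$), whereas the paper's proof passes directly to the second case.
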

	\begin{proof}
		Since $\mathrm{Im} \iota$ is closed and  $\iota \in B^{a} ( N_{2},N_{1}^{\prime} ),\mathrm{Im} \iota$ is orthogonally complementable in $N_{1}^{\prime}$ by \cite[Theorem 2.3.3 ]{MT},  that is  $N_{1}^{\prime} = \mathrm{Im}$ $ \iota $ $ \oplus \tilde{N_{1}^{\prime}}   $  for some closed submodule $\tilde{N_{1}^{\prime}}.$\\
		Hence  $H_{\mathcal{A}}=\mathrm{Im} \iota \oplus \tilde N_{1}^{\prime} \oplus {N_{1}^{\prime}}^{\bot}   ,$  that is $\mathrm{Im}\iota $ is orthogonally complementable in  $ H_{\mathcal{A}}  .$ Also, there exists  $\mathrm{J} \in  B^{a}(\mathrm{Im} \iota, N_{2})  $  such that   $\mathrm{J}\iota=id_{N_{2}},\iota \mathrm{J} =id_{\mathrm{Im}{\iota}}.$ Let  $\mathrm{P}_{\mathrm{Im}{\iota}}   $  be the orthogonal projection onto $\mathrm{Im} \iota$ and set  $\mathrm{C}=\mathrm{J}\mathrm{P}_{\mathrm{Im} \iota}   .$ Then  $\mathrm{C} \in B^{a}(H_{\mathcal{A}})  .$ Moreover, w.r.t. the decomposition 
		$$H_{\mathcal{A}} \oplus H_{\mathcal{A}}= ( M_{1} \oplus (M_{1}^{\prime} \tilde \oplus \mathrm{Im}\iota)) \tilde \oplus (N_{1}  \oplus \tilde N_{1}^{\prime})\stackrel{{\mathbf{M}_{\mathrm{C}}^{\mathcal{A}}}}{\longrightarrow}$$ 
		$$  (H_{\mathcal{A}} \oplus M_{2}^{\prime} ) \tilde \oplus (\lbrace 0 \rbrace \oplus N_{2}^{\prime} )=H_{\mathcal{A}} \oplus H_{\mathcal{A}}, $$
		$ {\mathbf{M}_{\mathrm{C}}^{\mathcal{A}}} $ has the matrix
		$\left\lbrack
		\begin{array}{ll}
		({{\mathbf{M}_{\mathrm{C}}^{\mathcal{A}}}})_{1} &({{\mathbf{M}_{\mathrm{C}}^{\mathcal{A}}}})_{2} \\
		({{\mathbf{M}_{\mathrm{C}}^{\mathcal{A}}}}_{3}) & ({{\mathbf{M}_{\mathrm{C}}^{\mathcal{A}}}})_{4} \\
		\end{array}
		\right \rbrack ,
		$   
		where  $ ({{\mathbf{M}_{\mathrm{C}}^{\mathcal{A}}}})_{1}  $  is an isomorphism. This follows by the same arguments as in the proof of Theorem \ref{t360}. Using that  $ N_{2}^{\prime}  $  is finitely generated and proceeding further as in the proof of the above mentiond theorem, we reach the desired conclusion.
	\end{proof}
	\begin{remark}
		In the case of ordinary Hilbert spaces, \cite[Theorem 4.4]{DDj}  part  $(1)\Rightarrow (2)   $  can be deduced as a corollary from Theorem \ref{t313}. 
		Indeed, if $\mathrm{F}$ is closed and $\mathrm{D}\in \Phi_{-}(H)$, which gives that $\mathrm{Im} \mathrm{D}$ is closed also, then the pair of decompositions
		$$ H = (\ker \mathrm{F})^{\perp} \oplus \ker \mathrm{F}\stackrel{\mathrm{F}}{\longrightarrow} \mathrm{Im} \mathrm{F} \oplus \mathrm{Im} \mathrm{F}^{\perp} =H ,$$ 
		$$ H = (\ker \mathrm{F})^{\perp} \oplus \ker \mathrm{D}\stackrel{\mathrm{D}}{\longrightarrow} \mathrm{Im} \mathrm{D} \oplus \mathrm{Im} \mathrm{D}^{\perp} =H $$
		for $\mathrm{F}$ and $\mathrm{D}$, respectively, is one particular pair of decompositions that satisfies the hypotheses of Theorem \ref{t313} as long $ (\mathrm{Im} \mathrm{F})^{\perp} \preceq \ker \mathrm{D}$.
	\end{remark}
	Let $R(\mathrm{F},\mathrm{D})$ be the set of all $\alpha \in Z(\mathcal{A})$ such that 	there exists no decompositions
	$$H_{\mathcal{A}} = M_{1} \tilde \oplus {N_{1}}\stackrel{\mathrm{F}-\alpha \mathrm{I}}{\longrightarrow}   {N_{2}}^{\perp} \tilde \oplus N_{2}= H_{\mathcal{A}},$$  
	$$H_{\mathcal{A}} = {N_{2}^{\prime}}^{\perp} \tilde \oplus {N_{1}^{\prime}}\stackrel{\mathrm{D}-\alpha \mathrm{I}}{\longrightarrow}   M_{2}^{\prime} \tilde \oplus N_{2}^{\prime}= H_{\mathcal{A}} $$
	that satisfy the hypotheses of the Theorem \ref{t313}. 
	Set  $R^{\prime}(\mathrm{F},\mathrm{D})$ to be the set of all $\alpha \in Z(\mathcal{A}) $ such that there exist no decompositions
	$$ H_{\mathcal{A}} \oplus  H_{\mathcal{A}} = M_{1} \tilde \oplus {N_{1} }\stackrel{\mathrm{F}^{\prime}-\alpha \mathrm{I}}{\longrightarrow}  M_{2} \tilde \oplus N_{2}= H_{\mathcal{A}}  \oplus  H_{\mathcal{A}}  , $$
	$$ H_{\mathcal{A}}  \oplus  H_{\mathcal{A}} = M_{1}^{\prime} \tilde \oplus {N_{1}^{\prime} }\stackrel{\mathrm{D}^{\prime}-\alpha \mathrm{I}}{\longrightarrow}  M_{2}^{\prime} \tilde \oplus N_{2}^{\prime}= H_{\mathcal{A}} \oplus  H_{\mathcal{A}}   $$
	that satisfy the hypotheses of the Theorem \ref{t311}.\\
	Then we have the following corollary: 
	\begin{corollary} \label{c314}
		Let $\mathrm{F},\mathrm{D} \in B^{a}(H_{\mathcal{A}})$. Then 
		$$\sigma_{{re}}^{\mathcal{A}}(\mathrm{D})\cup (\sigma_{{re}}^{\mathcal{A}}(\mathrm{F}) \cap R^{\prime}(\mathrm{F},\mathrm{D})) \subseteq \bigcap_{{\mathrm{C} \in  B^{a}(H_{\mathcal{A}})}}
		\sigma_{{re}}^{\mathcal{A}}({\mathbf{M}_{\mathrm{C}}^{\mathcal{A}}}) \subseteq \sigma_{{re}}^{\mathcal{A}}(\mathrm{D})\cup (\sigma_{{re}}^{\mathcal{A}}(\mathrm{F}) \cap R(\mathrm{F},\mathrm{D}))$$	\\
	\end{corollary}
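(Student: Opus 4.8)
The plan is to read off both inclusions from Theorem \ref{t311} and Theorem \ref{t313} by contraposition, using that $R(\mathrm{F},\mathrm{D})$ is exactly the set of $\alpha$ for which the \emph{hypotheses} of Theorem \ref{t313} fail (for $\mathrm{F}-\alpha 1,\mathrm{D}-\alpha 1$) and $R'(\mathrm{F},\mathrm{D})$ the set of $\alpha$ for which the \emph{conclusion} of Theorem \ref{t311} fails. Here $\sigma_{re}^{\mathcal{A}}(\mathrm{T})$ is the set of $\alpha\in Z(\mathcal{A})$ with $\mathrm{T}-\alpha 1\notin\mathcal{M}\Phi_{-}(H_{\mathcal{A}})$. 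The observation to record first is that $\mathbf{M}_{\mathrm{C}}^{\mathcal{A}}-\alpha\mathrm{I}=\mathbf{M}_{\mathrm{C}}^{\mathcal{A}}(\mathrm{F}-\alpha 1,\mathrm{D}-\alpha 1)$ for every $\alpha\in Z(\mathcal{A})$, so that $\alpha\in\sigma_{re}^{\mathcal{A}}(\mathbf{M}_{\mathrm{C}}^{\mathcal{A}})$ iff $\mathbf{M}_{\mathrm{C}}^{\mathcal{A}}(\mathrm{F}-\alpha 1,\mathrm{D}-\alpha 1)\notin\mathcal{M}\Phi_{-}(H_{\mathcal{A}}\oplus H_{\mathcal{A}})$, and similarly for $\mathrm{F}$ and $\mathrm{D}$; everything then becomes an application of the two theorems with $\mathrm{F}-\alpha 1,\mathrm{D}-\alpha 1$ in the roles of $\mathrm{F},\mathrm{D}$, the auxiliary block operators $\mathrm{F}',\mathrm{D}'$ being $\mathrm{diag}(\mathrm{F}-\alpha 1,1)$ and $\mathrm{diag}(1,\mathrm{D}-\alpha 1)$ from the factorisation of Proposition \ref{p310}.

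For the right-hand inclusion I would take $\alpha\notin\sigma_{re}^{\mathcal{A}}(\mathrm{D})\cup(\sigma_{re}^{\mathcal{A}}(\mathrm{F})\cap R(\mathrm{F},\mathrm{D}))$. Then $\mathrm{D}-\alpha 1\in\mathcal{M}\Phi_{-}(H_{\mathcal{A}})$, and since $\alpha\notin\sigma_{re}^{\mathcal{A}}(\mathrm{F})\cap R(\mathrm{F},\mathrm{D})$ either $\mathrm{F}-\alpha 1\in\mathcal{M}\Phi_{-}(H_{\mathcal{A}})$ or $\alpha\notin R(\mathrm{F},\mathrm{D})$, i.e.\ there exist decompositions of $\mathrm{F}-\alpha 1$ and $\mathrm{D}-\alpha 1$ together with an operator $\iota$ satisfying the hypotheses of Theorem \ref{t313}. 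In both cases those hypotheses hold, so Theorem \ref{t313} gives some $\mathrm{C}\in B^{a}(H_{\mathcal{A}})$ with $\mathbf{M}_{\mathrm{C}}^{\mathcal{A}}-\alpha\mathrm{I}\in\mathcal{M}\Phi_{-}(H_{\mathcal{A}}\oplus H_{\mathcal{A}})$, hence $\alpha\notin\bigcap_{\mathrm{C}}\sigma_{re}^{\mathcal{A}}(\mathbf{M}_{\mathrm{C}}^{\mathcal{A}})$; contraposing yields $\bigcap_{\mathrm{C}}\sigma_{re}^{\mathcal{A}}(\mathbf{M}_{\mathrm{C}}^{\mathcal{A}})\subseteq\sigma_{re}^{\mathcal{A}}(\mathrm{D})\cup(\sigma_{re}^{\mathcal{A}}(\mathrm{F})\cap R(\mathrm{F},\mathrm{D}))$.

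For the left-hand inclusion I would again contrapose: if $\alpha\notin\bigcap_{\mathrm{C}}\sigma_{re}^{\mathcal{A}}(\mathbf{M}_{\mathrm{C}}^{\mathcal{A}})$, pick $\mathrm{C}$ with $\mathbf{M}_{\mathrm{C}}^{\mathcal{A}}-\alpha\mathrm{I}\in\mathcal{M}\Phi_{-}(H_{\mathcal{A}}\oplus H_{\mathcal{A}})$ and apply Theorem \ref{t311} to $\mathrm{F}-\alpha 1,\mathrm{D}-\alpha 1$. It forces $\mathrm{D}-\alpha 1\in\mathcal{M}\Phi_{-}(H_{\mathcal{A}})$, i.e.\ $\alpha\notin\sigma_{re}^{\mathcal{A}}(\mathrm{D})$, and in addition either $\mathrm{F}-\alpha 1\in\mathcal{M}\Phi_{-}(H_{\mathcal{A}})$ (so $\alpha\notin\sigma_{re}^{\mathcal{A}}(\mathrm{F})$) or there exist decompositions of $\mathrm{F}'-\alpha\mathrm{I},\mathrm{D}'-\alpha\mathrm{I}$ on $H_{\mathcal{A}}\oplus H_{\mathcal{A}}$ of exactly the type produced by Theorem \ref{t311}, which by definition means $\alpha\notin R'(\mathrm{F},\mathrm{D})$. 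Either way $\alpha\notin\sigma_{re}^{\mathcal{A}}(\mathrm{F})\cap R'(\mathrm{F},\mathrm{D})$, so $\alpha\notin\sigma_{re}^{\mathcal{A}}(\mathrm{D})\cup(\sigma_{re}^{\mathcal{A}}(\mathrm{F})\cap R'(\mathrm{F},\mathrm{D}))$, which is the desired inclusion.

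The argument is pure bookkeeping, so the only point requiring care is the formulation: one must check that the decomposition data quantified over in the definitions of $R(\mathrm{F},\mathrm{D})$ and $R'(\mathrm{F},\mathrm{D})$ is literally the same data appearing as the input of Theorem \ref{t313} and as the output of Theorem \ref{t311} — in particular that "$\mathrm{F}'-\alpha\mathrm{I}$" in the definition of $R'(\mathrm{F},\mathrm{D})$ is to be read as the block operator $\mathrm{diag}(\mathrm{F}-\alpha 1,1)$ from the factorisation rather than as $\mathrm{F}'$ minus $\alpha\mathrm{I}$ — and that the shuttling between $H_{\mathcal{A}}$ and $H_{\mathcal{A}}\oplus H_{\mathcal{A}}$ and the invertibility of $\mathrm{C}'$ used inside those two theorems preserve all the finite-generation and isomorphism conditions being quantified over. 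With that matching made explicit, both inclusions are immediate.
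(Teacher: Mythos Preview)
Your proposal is correct and matches the paper's intended approach: the paper gives no explicit proof of this corollary, presenting it as an immediate consequence of Theorem \ref{t311} and Theorem \ref{t313}, and your contraposition argument is precisely the routine unpacking of that implication. Your remark about the notation in the definition of $R'(\mathrm{F},\mathrm{D})$---that ``$\mathrm{F}'-\alpha\mathrm{I}$'' must be read as $\mathrm{diag}(\mathrm{F}-\alpha 1,1)$ rather than $\mathrm{diag}(\mathrm{F}-\alpha 1,1-\alpha)$---is a genuine ambiguity in the paper's phrasing that you have resolved in the only way that makes the corollary follow from Theorem \ref{t311}.
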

	\begin{theorem} \label{t316} 
		Let ${\mathbf{M}_{\mathrm{C}}^{\mathcal{A}}} \in \mathcal{M}\Phi_{+}(H_{\mathcal{A}} \oplus H_{\mathcal{A}}).$ Then $\mathrm{F}^{\prime} \in \mathcal{M}\Phi_{+}(H_{\mathcal{A}} \oplus H_{\mathcal{A}})  $ and either $\mathrm{D} \in \mathcal{M}\Phi_{+}(H_{\mathcal{A}}) $ or there exist decompositions 
		$$ H_{\mathcal{A}} \oplus H_{\mathcal{A}} = M_{1} \tilde \oplus {N_{1}}\stackrel{\mathrm{F}^{\prime}}{\longrightarrow} M_{2} \tilde \oplus N_{2}=  H_{\mathcal{A}} \oplus H_{\mathcal{A}},$$
		$$  H_{\mathcal{A}} \oplus H_{\mathcal{A}} = M_{1}^{\prime} \tilde \oplus {N_{1}^{\prime}}\stackrel{\mathrm{D}^{\prime}}{\longrightarrow} M_{2}^{\prime} \tilde \oplus N_{2}^{\prime}=  H_{\mathcal{A}} \oplus H_{\mathcal{A}} ,$$ 
		w.r.t. which $ \mathrm{F}^{\prime},\mathrm{D}^{\prime}  $ have matrices
		$\left\lbrack
		\begin{array}{ll}
		\mathrm{F}_{1}^{\prime} & 0 \\
		0 & \mathrm{F}_{4}^{\prime} \\
		\end{array}
		\right \rbrack
		,$  
		$\left\lbrack
		\begin{array}{ll}
		\mathrm{D}_{1}^{\prime} & 0 \\
		0 & \mathrm{D}_{4}^{\prime} \\
		\end{array}
		\right \rbrack
		,$ 
		respectively, where $\mathrm{F}_{1}^{\prime}, \mathrm{D}_{1}^{\prime}$ are isomorphisms, $M_{2}\cong  M_{1}^{\prime} $ and $N_{2}\cong  N_{1}^{\prime},$ $N_{1}$ is finitely generated and $N_{2}, N_{1}^{\prime} $ are closed, but not finitely generated.  
	\end{theorem}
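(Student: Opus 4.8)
The plan is to adapt the proof of Theorem \ref{t311}, interchanging the roles of $\mathcal{M}\Phi_{+}$ and $\mathcal{M}\Phi_{-}$ and extracting the \emph{right} factor of the factorization $\mathbf{M}_{\mathrm{C}}^{\mathcal{A}}=\mathrm{D}'\mathrm{C}'\mathrm{F}'$, where
$$\mathrm{F}'=\left[\begin{array}{ll}\mathrm{F}&0\\0&1\end{array}\right],\qquad \mathrm{C}'=\left[\begin{array}{ll}1&\mathrm{C}\\0&1\end{array}\right],\qquad \mathrm{D}'=\left[\begin{array}{ll}1&0\\0&\mathrm{D}\end{array}\right],$$
rather than the left factor. (One could instead note that $\mathbf{M}_{\mathrm{C}}^{\mathcal{A}}$ is, via the coordinate swap, unitarily equivalent to $(\mathbf{M}_{\mathrm{C}^{*}}^{\mathcal{A}}(\mathrm{D}^{*},\mathrm{F}^{*}))^{*}$, so that $\mathbf{M}_{\mathrm{C}}^{\mathcal{A}}\in\mathcal{M}\Phi_{+}(H_{\mathcal{A}}\oplus H_{\mathcal{A}})\Leftrightarrow\mathbf{M}_{\mathrm{C}^{*}}^{\mathcal{A}}(\mathrm{D}^{*},\mathrm{F}^{*})\in\mathcal{M}\Phi_{-}(H_{\mathcal{A}}\oplus H_{\mathcal{A}})$, and then invoke Theorem \ref{t311}; but dualising its conclusion is awkward because those decompositions use non-orthogonal direct sums, so I prefer the direct argument.) First I would settle the claim $\mathrm{F}'\in\mathcal{M}\Phi_{+}(H_{\mathcal{A}}\oplus H_{\mathcal{A}})$: by \cite[Theorem 2.2]{I} (transported along an adjointable isomorphism $H_{\mathcal{A}}\cong H_{\mathcal{A}}\oplus H_{\mathcal{A}}$, as in the proof of Theorem \ref{t320}) the hypothesis means $\mathbf{M}_{\mathrm{C}}^{\mathcal{A}}$ is left invertible modulo $K(H_{\mathcal{A}}\oplus H_{\mathcal{A}})$; writing $\mathbf{M}_{\mathrm{C}}^{\mathcal{A}}=(\mathrm{D}'\mathrm{C}')\mathrm{F}'$ and using that a right factor of a left-invertible-modulo-compacts operator is again left invertible modulo compacts, $\mathrm{F}'$ is left invertible modulo $K(H_{\mathcal{A}}\oplus H_{\mathcal{A}})$, hence $\mathrm{F}'\in\mathcal{M}\Phi_{+}(H_{\mathcal{A}}\oplus H_{\mathcal{A}})$ by \cite[Theorem 2.2]{I}.

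Next I would construct the chain of decompositions. Starting from an $\mathcal{M}\Phi_{+}$-decomposition $H_{\mathcal{A}}\oplus H_{\mathcal{A}}=M_{1}\tilde{\oplus}N_{1}\stackrel{\mathbf{M}_{\mathrm{C}}^{\mathcal{A}}}{\longrightarrow}M_{2}\tilde{\oplus}N_{2}=H_{\mathcal{A}}\oplus H_{\mathcal{A}}$ of $\mathbf{M}_{\mathrm{C}}^{\mathcal{A}}$ with $(\mathbf{M}_{\mathrm{C}}^{\mathcal{A}})_{1}$ an isomorphism and $N_{1}$ finitely generated, I would use \cite[Theorem 2.2]{I} (part $1)\Rightarrow 2)$) to arrange $M_{1}$ to be orthogonally complementable, so that $\mathrm{F}'|_{M_{1}}$ is adjointable; since $\mathbf{M}_{\mathrm{C}}^{\mathcal{A}}|_{M_{1}}$ is bounded below and $\mathrm{C}'$ is invertible, $\mathrm{F}'|_{M_{1}}$ is bounded below, so by \cite[Theorem 2.3.3]{MT} its image is closed and orthogonally complementable, and then, proceeding exactly as in the proof of \cite[Theorem 2.2]{I} (part $2)\Rightarrow 1)$) and of Theorem \ref{t311}, one obtains a chain
$$M_{1}\tilde{\oplus}N_{1}\stackrel{\mathrm{F}'}{\longrightarrow}R_{1}\tilde{\oplus}R_{2}\stackrel{\mathrm{C}'}{\longrightarrow}\mathrm{C}'(R_{1})\tilde{\oplus}\mathrm{C}'(R_{2})\stackrel{\mathrm{D}'}{\longrightarrow}M_{2}\tilde{\oplus}N_{2}$$
with respect to which $\mathrm{F}'$ is block-diagonal with isomorphic $(1,1)$-block, $\mathrm{C}'$ is block-diagonal with both diagonal blocks isomorphisms, and $\mathrm{D}'$ is upper triangular with isomorphic $(1,1)$-block $\mathrm{D}_{1}'$. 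Straightening $\mathrm{D}'$ by the isomorphism $\mathrm{W}=\left[\begin{array}{ll}1&-{\mathrm{D}_{1}'}^{-1}\mathrm{D}_{2}'\\0&1\end{array}\right]$ of $\mathrm{C}'(R_{1})\tilde{\oplus}\mathrm{C}'(R_{2})$ onto itself, exactly as in the proof of Theorem \ref{t311}, makes $\mathrm{D}'$ block-diagonal with respect to $\mathrm{W}\mathrm{C}'(R_{1})\tilde{\oplus}\mathrm{W}\mathrm{C}'(R_{2})\stackrel{\mathrm{D}'}{\longrightarrow}M_{2}\tilde{\oplus}N_{2}$, where $\mathrm{W}\mathrm{C}'(R_{i})\cong R_{i}$ since $\mathrm{W}\mathrm{C}'$ is an isomorphism.

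Finally comes the dichotomy. In the block-diagonal decomposition of $\mathrm{F}'$ the domain complement $N_{1}$ is finitely generated, while its range complement $R_{2}$ — isomorphic to the domain complement $\mathrm{W}\mathrm{C}'(R_{2})$ of $\mathrm{D}'$ — is not controlled a priori. If $R_{2}$ is finitely generated, then $\mathrm{D}'\in\mathcal{M}\Phi_{+}(H_{\mathcal{A}}\oplus H_{\mathcal{A}})$, whence $\mathrm{D}\in\mathcal{M}\Phi_{+}(H_{\mathcal{A}})$ (again via $H_{\mathcal{A}}\cong H_{\mathcal{A}}\oplus H_{\mathcal{A}}$ and \cite[Theorem 2.2]{I}, as in Theorem \ref{t320}). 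If $R_{2}$ is not finitely generated — which must be the case when $\mathrm{D}\notin\mathcal{M}\Phi_{+}(H_{\mathcal{A}})$, since otherwise the previous line would give $\mathrm{D}\in\mathcal{M}\Phi_{+}(H_{\mathcal{A}})$ — then the two block-diagonal decompositions just produced furnish, in the notation of the statement, the data $M_{1}\tilde{\oplus}N_{1}\to R_{1}\tilde{\oplus}R_{2}$ for $\mathrm{F}'$ and $\mathrm{W}\mathrm{C}'(R_{1})\tilde{\oplus}\mathrm{W}\mathrm{C}'(R_{2})\to M_{2}\tilde{\oplus}N_{2}$ for $\mathrm{D}'$, with $\mathrm{F}_{1}',\mathrm{D}_{1}'$ isomorphisms, $N_{1}$ finitely generated, $R_{2}$ and $\mathrm{W}\mathrm{C}'(R_{2})$ closed but not finitely generated, $R_{1}\cong\mathrm{W}\mathrm{C}'(R_{1})$ and $R_{2}\cong\mathrm{W}\mathrm{C}'(R_{2})$ — exactly the asserted structure. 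I expect the main difficulty to be the same orthogonality bookkeeping that is already delicate in Theorem \ref{t311}: ensuring that the intermediate submodules $R_{1},R_{2},\mathrm{C}'(R_{1}),\mathrm{C}'(R_{2})$ are orthogonally complementable so that the relevant restrictions of $\mathrm{F}',\mathrm{C}',\mathrm{D}'$ are adjointable and \cite[Theorem 2.3.3]{MT} together with the \cite[Lemma 2.7.10]{MT}-type diagonalisation procedure can be applied; once the chain is in hand the algebraic identities and the case split are routine.
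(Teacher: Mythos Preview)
Your proposal is correct and follows essentially the same route as the paper: start from an $\mathcal{M}\Phi_{+}$-decomposition of $\mathbf{M}_{\mathrm{C}}^{\mathcal{A}}$ with $M_{1}=N_{1}^{\perp}$, build the chain $M_{1}\tilde{\oplus}N_{1}\stackrel{\mathrm{F}'}{\to}R_{1}\tilde{\oplus}R_{2}\stackrel{\mathrm{C}'}{\to}\mathrm{C}'(R_{1})\tilde{\oplus}\mathrm{C}'(R_{2})\stackrel{\mathrm{D}'}{\to}M_{2}'\tilde{\oplus}N_{2}'$ as in Lemma~\ref{l210} and Theorem~\ref{t320}, straighten $\mathrm{D}'$ via $\mathrm{W}$, and then proceed exactly as in Theorem~\ref{t311}. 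The only minor deviation is that you first prove $\mathrm{F}'\in\mathcal{M}\Phi_{+}$ by a separate Calkin-algebra argument, whereas the paper reads this off directly from the chain (since $N_{1}$ is finitely generated); both are fine, and your version is in fact slightly more explicit than the paper's, which simply refers back to Theorem~\ref{t311} for the dichotomy.
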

	\begin{proof}
		Since ${\mathbf{M}_{\mathrm{C}}^{\mathcal{A}}} \in \mathcal{M}\Phi_{+}(H_{\mathcal{A}} \oplus H_{\mathcal{A}}),$ there exists an $ \mathcal{M}\Phi_{+} $ decomposition for ${\mathbf{M}_{\mathrm{C}}^{\mathcal{A}}},$
		$$H_{\mathcal{A}} \oplus H_{\mathcal{A}} = M_{1} \tilde \oplus {N_{1}}\stackrel{{\mathbf{M}_{\mathrm{C}}^{\mathcal{A}}}}{\longrightarrow} M_{2}^{\prime} \tilde \oplus N_{2}^{\prime}= H_{\mathcal{A}} \oplus H_{\mathcal{A}} ,$$
		so $N_{1}$ is finitely generated. By the proof of \cite[Theorem 2.7.6 ]{MT}, we may assume that $M_{1}=N_{1}^{\perp}.$ Hence ${\mathrm{F}^{\prime}}_{{\mid}_{{M}_{1}}}   $, is adjointable. As in the proof of Lemma \ref{l210} and  Theorem \ref{t320} we may consider a chain of decompositions
		$$ H_{\mathcal{A}} \oplus H_{\mathcal{A}} = M_{1} \tilde \oplus {N_{1}}\stackrel{\mathrm{F}^{\prime}}{\longrightarrow} R_{1} \tilde \oplus R_{2}\stackrel{\mathrm{C}^{\prime}}{\longrightarrow} \mathrm{C}^{\prime}(R_{1}) \tilde \oplus \mathrm{C}^{\prime}(R_{2})\stackrel{\mathrm{D}^{\prime}}{\longrightarrow}  M_{2}^{\prime} \tilde \oplus M_{2}^{\prime}= H_{\mathcal{A}} \oplus H_{\mathcal{A}}$$ 
		w.r.t. which $\mathrm{F}^{\prime},\mathrm{C}^{\prime},\mathrm{D}^{\prime}$ have matrices
		$\left\lbrack
		\begin{array}{ll}
		\mathrm{F}_{1}^{\prime} & 0 \\
		0 & \mathrm{F}_{4}^{\prime} \\
		\end{array}
		\right \rbrack
		,$  
		$\left\lbrack
		\begin{array}{ll}
		\mathrm{C}_{1}^{\prime} & 0 \\
		0 & \mathrm{C}_{4}^{\prime} \\
		\end{array}
		\right \rbrack
		$  
		and 
		$\left\lbrack
		\begin{array}{ll}
		\mathrm{D}_{1}^{\prime} & \mathrm{D}_{2}^{\prime} \\
		0 & \mathrm{D}_{4}^{\prime} \\
		\end{array}
		\right \rbrack
		,$
		respectively, where $\mathrm{F}_{1}^{\prime},\mathrm{C}_{1}^{\prime},\mathrm{C}_{4}^{\prime},\mathrm{D}_{1}^{\prime}$ are isomorphisms. Then we can proceed in the same way as in the proof of Theorem \ref{t311}.
	\end{proof}
	\begin{remark}
		In the case of Hilbert spaces, the implication $(2)\Rightarrow (3)$ in\\
		\cite[Theorem 4.6]{DDj} follows as a corollary of Theorem \ref{t316}. Indeed, for the implication $(2)\Rightarrow (3b) $, we may proceed as follows: Since $\mathrm{Im}(\mathrm{F})^{0} \cong \mathrm{Im}(\mathrm{F})^{\bot}$ and $(\ker \mathrm{D})^{\prime} \cong \ker \mathrm{D}$ when one considers Hilbert spaces, then by \cite[Remark 4.3]{DDj}, $(\mathrm{Im} \mathrm{F})^{0} \prec (\ker \mathrm{D})^{\prime}  $ means simply that  $\dim \mathrm{Im} \mathrm{F}^{\bot}<\infty   $ whereas $\dim \ker \mathrm{D}=\infty$. If in addition $\mathrm{D} \notin \Phi_{+}(H)$, then $\mathrm{D}^{\prime} \notin \Phi_{+}(H \oplus H)  $. Now, if $\dim \mathrm{Im}(\mathrm{F})^{\bot} < \infty$, then $\dim \ker \mathrm{D}=\infty ,$ and $ \mathrm{F} \in \Phi (H)$ as $\mathrm{F} \in \Phi_{+}(H)   $ and $\dim \mathrm{Im}(\mathrm{F})^{\perp} <\infty .$ Then $  \mathrm{F}^{\prime} \in \Phi (H \oplus H)  ,$ so by \cite[Lemma 2.16 ]{I} $N_{2}$ must be finitely generated. Thus $N_{1}^{\prime}$ must be finitely generated being isomorphic to $N_{2}$.	By the same arguments as earlier, we have that $\ker \mathrm{D}^{\prime} \cong \ker \mathrm{D}   $ and $\ker \mathrm{D}^{\prime} \subseteq N_{1}^{\prime}$. Since we consider Hilbert spaces now, the fact that $N_{1}^{\prime}$ is finitely generated means actually that $N_{1}^{\prime}$ is finite dimensional. Hence $\ker \mathrm{D}^{\prime} $ must be finite dimensional, so $\dim \ker \mathrm{D}= \dim \ker \mathrm{D}^{\prime}  < \infty.$ This is in a contradiction to $\mathrm{Im} \mathrm{F}^{\bot} \prec \ker \mathrm{D} $. So, in the case of Hilbert spaces, if ${\mathbf{M}_{\mathrm{C}}} \in \Phi_{+}(H \oplus H)   $, from Theorem \ref{t316} it follows that $\mathrm{F} \in \Phi_{+}(H)   $ and either $\mathrm{D} \in \Phi_{+}(H)   $ or $\mathrm{Im} \mathrm{F}^{\bot}   $ is infinite dimensional.
	\end{remark}
	\begin{theorem} \label{t318}
		Let $\mathrm{F} \in \mathcal{M}\Phi_{+}(H_{\mathcal{A}}) $ and suppose that either $\mathrm{D} \in \mathcal{M}\Phi_{+}(H_{\mathcal{A}})    $ or that there exist decompositions 
		$$ H_{\mathcal{A}} = M_{1} \tilde \oplus {N_{1}}\stackrel{\mathrm{F}}{\longrightarrow} N_{2}^{\bot} \tilde \oplus N_{2}= H_{\mathcal{A}} ,$$
		$$ H_{\mathcal{A}} = {N_{1}^{\prime}}^{\bot} \tilde \oplus N_{1}^{\prime}\stackrel{\mathrm{D}}{\longrightarrow} M_{2}^{\prime} \tilde \oplus N_{2}^{\prime}= H_{\mathcal{A}}$$
		w.r.t. which $\mathrm{F},\mathrm{D}$ have matrices 
		$\left\lbrack
		\begin{array}{ll}
		\mathrm{F}_{1} & 0 \\
		0 & \mathrm{F}_{4} \\
		\end{array}
		\right \rbrack
		,$
		$\left\lbrack
		\begin{array}{ll}
		\mathrm{D}_{1} & 0 \\
		0 & \mathrm{D}_{4} \\
		\end{array}
		\right \rbrack
		,$
		respectively, where $\mathrm{F}_{1},\mathrm{D}_{1}$ are isomorphisms, $N_{1}^{\prime}$ is finitely generated and in addition there exists some\\
		$\iota \in B^{a} (N_{1}^{\prime},N_{2})$ such that $\iota$ is an isomorphism onto its image. Then $${\mathbf{M}_{\mathrm{C}}^{\mathcal{A}}} \in \mathcal{M}\Phi_{+}(H_{\mathcal{A}} \oplus H_{\mathcal{A}}),$$ for some $\mathrm{C} \in B^{a}(H_{\mathcal{A}})$.
	\end{theorem}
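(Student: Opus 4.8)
The plan is to mirror the proof of Theorem \ref{t313}, using in the nontrivial case the ``condition b)'' construction from the proof of Theorem \ref{t360}; the only difference in the outcome will be that the decomposition produced for ${\mathbf{M}_{\mathrm{C}}^{\mathcal{A}}}$ need not have a finitely generated codomain complement, so that it is an $\mathcal{M}\Phi_{+}$-decomposition rather than an $\mathcal{M}\Phi$-decomposition. If $\mathrm{D}\in\mathcal{M}\Phi_{+}(H_{\mathcal{A}})$ I would simply take $\mathrm{C}=0$: choosing $\mathcal{M}\Phi_{+}$-decompositions for $\mathrm{F}$ and for $\mathrm{D}$ and forming their direct sum gives an $\mathcal{M}\Phi_{+}$-decomposition for ${\mathbf{M}_{0}^{\mathcal{A}}}=\mathrm{F}\oplus\mathrm{D}$, since a direct sum of two finitely generated modules is finitely generated (equivalently, by \cite[Theorem 2.2]{I}, with $\mathrm{F}$ and $\mathrm{D}$ left invertible modulo compacts so is $\mathrm{F}\oplus\mathrm{D}$).

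In the remaining case, take the stated decompositions of $\mathrm{F}$ and $\mathrm{D}$ together with $\iota\in B^{a}(N_{1}^{\prime},N_{2})$, an isomorphism onto $\mathrm{Im}\,\iota$. Being adjointable and bounded below, $\iota$ has closed range, hence by \cite[Theorem 2.3.3]{MT} $\mathrm{Im}\,\iota$ is orthogonally complementable in $N_{2}$, and since $N_{2}$ is orthogonally complementable in $H_{\mathcal{A}}$ so is $\mathrm{Im}\,\iota$; write $N_{2}=\mathrm{Im}\,\iota\oplus(\mathrm{Im}\,\iota)^{\perp}$. As $H_{\mathcal{A}}={N_{1}^{\prime}}^{\perp}\tilde{\oplus}N_{1}^{\prime}$, the orthogonal projection $\mathrm{P}_{N_{1}^{\prime}}$ onto $N_{1}^{\prime}$ exists, and I would set $\mathrm{C}=\iota\,\mathrm{P}_{N_{1}^{\prime}}\in B^{a}(H_{\mathcal{A}})$. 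The claim is that with respect to the decomposition
$$H_{\mathcal{A}}\oplus H_{\mathcal{A}}=(M_{1}\oplus H_{\mathcal{A}})\tilde{\oplus}(N_{1}\oplus\{0\})$$
$$\downarrow{\mathbf{M}_{\mathrm{C}}^{\mathcal{A}}}$$
$$H_{\mathcal{A}}\oplus H_{\mathcal{A}}=\bigl((N_{2}^{\perp}\tilde{\oplus}\mathrm{Im}\,\iota)\oplus M_{2}^{\prime}\bigr)\tilde{\oplus}\bigl((\mathrm{Im}\,\iota)^{\perp}\oplus N_{2}^{\prime}\bigr)$$
the operator ${\mathbf{M}_{\mathrm{C}}^{\mathcal{A}}}$ has a block matrix whose entry $({\mathbf{M}_{\mathrm{C}}^{\mathcal{A}}})_{1}$, after projecting, equals $\left\lbrack\begin{array}{ll}\mathrm{F}_{\mid_{M_{1}}}&\iota\,\mathrm{P}_{N_{1}^{\prime}}\\0&\mathrm{D}\sqcap_{{N_{1}^{\prime}}^{\perp}}\end{array}\right\rbrack$ and is an isomorphism onto $(N_{2}^{\perp}\tilde{\oplus}\mathrm{Im}\,\iota)\oplus M_{2}^{\prime}$. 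Surjectivity follows at once from the fact that $\mathrm{F}_{1}\colon M_{1}\to N_{2}^{\perp}$, $\iota\colon N_{1}^{\prime}\to\mathrm{Im}\,\iota$ and $\mathrm{D}_{1}\colon{N_{1}^{\prime}}^{\perp}\to M_{2}^{\prime}$ are all onto, so one assembles a preimage componentwise. For injectivity: if $({\mathbf{M}_{\mathrm{C}}^{\mathcal{A}}})_{1}$ kills $(x,y)$ with $x\in M_{1}$ and $y\in H_{\mathcal{A}}$, then $\mathrm{D}\sqcap_{{N_{1}^{\prime}}^{\perp}}y=0$ forces $y\in N_{1}^{\prime}$ since $\mathrm{D}_{1}$ is an isomorphism, whence $\mathrm{C}y=\iota y$ and $\mathrm{F}x+\iota y=0$; because $\mathrm{F}x\in N_{2}^{\perp}$, $\iota y\in N_{2}$ and $N_{2}^{\perp}\cap N_{2}=\{0\}$, this gives $\mathrm{F}x=\iota y=0$, and then $x=y=0$ as $\mathrm{F}_{\mid_{M_{1}}}$ and $\iota$ are bounded below.

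To finish, the domain complement $N_{1}\oplus\{0\}$ is finitely generated, $N_{1}$ being finitely generated since $\mathrm{F}\in\mathcal{M}\Phi_{+}(H_{\mathcal{A}})$. Running the diagonalisation procedure of \cite[Lemma 2.7.10]{MT}, exactly as at the end of the proof of Theorem \ref{t360}, one composes ${\mathbf{M}_{\mathrm{C}}^{\mathcal{A}}}$ with isomorphisms on the two sides to obtain new decompositions, with the same complement modules, with respect to which ${\mathbf{M}_{\mathrm{C}}^{\mathcal{A}}}$ is block-diagonal with an isomorphism in the upper-left slot; since the domain complement is still finitely generated this is an $\mathcal{M}\Phi_{+}$-decomposition, and hence ${\mathbf{M}_{\mathrm{C}}^{\mathcal{A}}}\in\mathcal{M}\Phi_{+}(H_{\mathcal{A}}\oplus H_{\mathcal{A}})$. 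I expect the only step requiring genuine care to be the verification that $({\mathbf{M}_{\mathrm{C}}^{\mathcal{A}}})_{1}$ is an isomorphism — that is, the compatibility of the chosen decompositions of $\mathrm{F}$ and $\mathrm{D}$ with $\iota$ — which is the exact analogue of the injectivity argument in the proofs of Theorem \ref{t360}, case b), and Theorem \ref{t313}; everything else transfers verbatim, and the reason the conclusion is $\mathcal{M}\Phi_{+}$ rather than $\mathcal{M}\Phi$ is precisely that here the codomain complement $(\mathrm{Im}\,\iota)^{\perp}\oplus N_{2}^{\prime}$ need not be finitely generated.
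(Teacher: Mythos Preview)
Your proposal is correct and follows essentially the same route as the paper: the paper's proof is the one-line instruction ``Let $\mathrm{C}=\mathrm{P}_{N_{1}^{\prime}}\iota$ \dots\ then apply similar arguments as in the proof of Theorem \ref{t360} and Theorem \ref{t313}'', and you have faithfully expanded exactly that, using the ``case b)'' decomposition from Theorem \ref{t360}. Note that your $\mathrm{C}=\iota\,\mathrm{P}_{N_{1}^{\prime}}$ is the correct composition order (matching $\tilde{\mathrm{J}^{\prime}}=\mathrm{J}^{\prime}\mathrm{P}_{N_{1}^{\prime}}$ in Theorem \ref{t360} and $\mathrm{C}=\mathrm{J}\mathrm{P}_{\mathrm{Im}\iota}$ in Theorem \ref{t313}); the paper's $\mathrm{P}_{N_{1}^{\prime}}\iota$ is evidently a typographical slip.
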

	\begin{proof}
		Let  $\mathrm{C}=\mathrm{P}_{N_{1}^{\prime}}  \iota$ where $\mathrm{P}_{N_{1}^{\prime}}$ denotes the orthogonal projection onto $N_{1}^{\prime},$ then apply similar arguments as in the proof of Theorem \ref{t360} and Theorem \ref{t313} 
	\end{proof}
	\begin{remark}
		The implication $(1)\Rightarrow (2)$ in \cite[Theorem 4.6]{DDj} in case of Hilbert spaces could also be deduced as a corollary from \ref{t318}. Indeed, if $ \mathrm{Im} \mathrm{D} $ is closed, then $ \mathrm{D} $ is an isomorphism from $ \ker \mathrm{D}^{\bot} $ onto $ \mathrm{Im} \mathrm{D}   $. Moreover, if $ \mathrm{F} \in \Phi_{+}(H) $, then $ \mathrm{F} $ is also an isomorphism from $  \ker \mathrm{F}^{\bot} $ onto $\mathrm{Im} \mathrm{F}$ and $\dim \ker \mathrm{F} < \infty $. If in addition $\ker \mathrm{D} \preceq \mathrm{Im} \mathrm{F}^{\bot}   $, then the pair of decompositions
		$$H=\ker \mathrm{F}^{\bot} \oplus \ker \mathrm{F}\stackrel{\mathrm{F}}{\longrightarrow} \mathrm{Im} \mathrm{F} \oplus \mathrm{Im} \mathrm{F}^{\bot}=H ,$$
		$$H=\ker \mathrm{D}^{\bot} \oplus \ker \mathrm{D}\stackrel{\mathrm{D}}{\longrightarrow} \mathrm{Im} \mathrm{D} \oplus \mathrm{Im} \mathrm{D}^{\bot}=H $$
		is one particular pair of decompositions that satisfies the hypotheses of Theorem \ref{t318}.	
	\end{remark} 
	%
	%
	Let $L^{\prime}(\mathrm{F},\mathrm{D})$ be the set of all $\alpha \in Z(\mathcal{A})$ such that there exist no decompositions 
	$$H_{\mathcal{A}} \oplus H_{\mathcal{A}} = M_{1} \tilde \oplus N_{1}\stackrel{\mathrm{F}^{\prime}-\alpha \mathrm{I}}{\longrightarrow} M_{2} \tilde \oplus N_{2}= H_{\mathcal{A}} \oplus H_{\mathcal{A}} ,$$
	$$H_{\mathcal{A}} \oplus H_{\mathcal{A}} = M_{1}^{\prime} \tilde \oplus N_{1}^{\prime}\stackrel{\mathrm{D}^{\prime}-\alpha \mathrm{I}}{\longrightarrow} M_{2}^{\prime} \tilde \oplus N_{2}^{\prime}= H_{\mathcal{A}} \oplus H_{\mathcal{A}} ,$$
	for $\mathrm{F}^{\prime}-\alpha \mathrm{I}, \mathrm{D}^{\prime}-\alpha \mathrm{I}$ respectively, which satisfy the hypotheses of Theorem \ref{t316}.\\
	Set $L(\mathrm{F},\mathrm{D})$ to be the set of all $\alpha \in Z(\mathcal{A})$ such that there exist no decompositions
	$$H_{\mathcal{A}} = M_{1} \tilde \oplus N_{1}\stackrel{\mathrm{F}-\alpha 1}{\longrightarrow} N_{2}^{\bot} \tilde \oplus N_{2}= H_{\mathcal{A}} ,$$
	$$H_{\mathcal{A}} = {N_{1}^{\prime}}^{\bot} \tilde \oplus N_{1}^{\prime}\stackrel{\mathrm{D}-\alpha 1}{\longrightarrow} M_{2}^{\prime} \tilde \oplus N_{2}^{\prime}= H_{\mathcal{A}},$$
	for $\mathrm{F}-\alpha 1, \mathrm{D}-\alpha 1 $ respectively which satisfy the hypotheses of Theorem \ref{t318}.\\
	Then we have the following corollary:
	\begin{corollary} \label{c320} 
		Corollary: Let $\mathrm{F},\mathrm{D} \in B^{a} (H_{\mathcal{A}})$. Then 
		$$\sigma_{{le}}^{\mathcal{A}}(\mathrm{F})\cup (\sigma_{{le}}^{\mathcal{A}}(\mathrm{D}) \cap L^{\prime}(\mathrm{F},\mathrm{D})) \subseteq \bigcap_{{\mathrm{C} \in  B^{a}(H_{\mathcal{A}})}}
		\sigma_{{le}}^{\mathcal{A}}({\mathbf{M}_{\mathrm{C}}^{\mathcal{A}}}) \subseteq \sigma_{{le}}^{\mathcal{A}}(\mathrm{F})\cup (\sigma_{{le}}^{\mathcal{A}}(\mathrm{D}) \cap L(\mathrm{F},\mathrm{D}))$$	
		
	\end{corollary}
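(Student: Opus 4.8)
The plan is to deduce both inclusions directly from Theorem \ref{t316} and Theorem \ref{t318}, exactly as Corollary \ref{c314} was obtained from Theorem \ref{t311} and Theorem \ref{t313}. The organizing observation is that $\mathbf{M}_{\mathrm{C}}^{\mathcal{A}}-\alpha\mathrm{I}$ is again an operator matrix of the studied form --- the one attached to the pair $(\mathrm{F}-\alpha 1,\mathrm{D}-\alpha 1)$ and the same $\mathrm{C}$ --- so that, reading $\mathrm{F}^{\prime}-\alpha\mathrm{I}$ and $\mathrm{D}^{\prime}-\alpha\mathrm{I}$ as the factors $\mathrm{F}^{\prime},\mathrm{D}^{\prime}$ of $\mathbf{M}_{\mathrm{C}}^{\mathcal{A}}-\alpha\mathrm{I}$ in the factorization of Proposition \ref{p310}, Theorem \ref{t316} and Theorem \ref{t318} apply to $\mathbf{M}_{\mathrm{C}}^{\mathcal{A}}-\alpha\mathrm{I}$ verbatim. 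I also use the reduction already performed inside the proof of Theorem \ref{t320}: via an adjointable isomorphism $H_{\mathcal{A}}\cong H_{\mathcal{A}}\oplus H_{\mathcal{A}}$ and the Calkin-algebra descriptions of \cite[Theorem 2.2]{I} and \cite[Theorem 2.3]{I}, $\mathrm{F}-\alpha 1\in\mathcal{M}\Phi_{+}(H_{\mathcal{A}})$ if and only if $\mathrm{F}^{\prime}-\alpha\mathrm{I}\in\mathcal{M}\Phi_{+}(H_{\mathcal{A}}\oplus H_{\mathcal{A}})$, and similarly for $\mathrm{D}$; recall that $\sigma_{le}^{\mathcal{A}}(T)$ is the set of $\alpha\in Z(\mathcal{A})$ for which $T-\alpha 1\notin\mathcal{M}\Phi_{+}$.

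For the left-hand inclusion I would fix $\alpha\in\sigma_{le}^{\mathcal{A}}(\mathrm{F})\cup(\sigma_{le}^{\mathcal{A}}(\mathrm{D})\cap L^{\prime}(\mathrm{F},\mathrm{D}))$ and assume, for contradiction, that $\mathbf{M}_{\mathrm{C}}^{\mathcal{A}}-\alpha\mathrm{I}\in\mathcal{M}\Phi_{+}(H_{\mathcal{A}}\oplus H_{\mathcal{A}})$ for some $\mathrm{C}$. By Theorem \ref{t316} this forces $\mathrm{F}^{\prime}-\alpha\mathrm{I}\in\mathcal{M}\Phi_{+}$, hence $\mathrm{F}-\alpha 1\in\mathcal{M}\Phi_{+}(H_{\mathcal{A}})$, so $\alpha\notin\sigma_{le}^{\mathcal{A}}(\mathrm{F})$ and therefore $\alpha\in\sigma_{le}^{\mathcal{A}}(\mathrm{D})\cap L^{\prime}(\mathrm{F},\mathrm{D})$. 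Theorem \ref{t316} moreover gives the dichotomy: either $\mathrm{D}-\alpha 1\in\mathcal{M}\Phi_{+}(H_{\mathcal{A}})$, contradicting $\alpha\in\sigma_{le}^{\mathcal{A}}(\mathrm{D})$, or there are decompositions for $\mathrm{F}^{\prime}-\alpha\mathrm{I}$ and $\mathrm{D}^{\prime}-\alpha\mathrm{I}$ meeting the hypotheses of Theorem \ref{t316}, contradicting $\alpha\in L^{\prime}(\mathrm{F},\mathrm{D})$. Both branches being impossible, $\mathbf{M}_{\mathrm{C}}^{\mathcal{A}}-\alpha\mathrm{I}\notin\mathcal{M}\Phi_{+}$ for every $\mathrm{C}$; that is, $\alpha\in\bigcap_{\mathrm{C}}\sigma_{le}^{\mathcal{A}}(\mathbf{M}_{\mathrm{C}}^{\mathcal{A}})$.

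For the right-hand inclusion I would argue by contraposition. If $\alpha\notin\sigma_{le}^{\mathcal{A}}(\mathrm{F})\cup(\sigma_{le}^{\mathcal{A}}(\mathrm{D})\cap L(\mathrm{F},\mathrm{D}))$, then $\mathrm{F}-\alpha 1\in\mathcal{M}\Phi_{+}(H_{\mathcal{A}})$ and, in addition, $\mathrm{D}-\alpha 1\in\mathcal{M}\Phi_{+}(H_{\mathcal{A}})$ or $\alpha\notin L(\mathrm{F},\mathrm{D})$. If $\mathrm{D}-\alpha 1\in\mathcal{M}\Phi_{+}(H_{\mathcal{A}})$, then Theorem \ref{t318}, applied to $(\mathrm{F}-\alpha 1,\mathrm{D}-\alpha 1)$ in the case where its first alternative holds, yields some $\mathrm{C}$ with $\mathbf{M}_{\mathrm{C}}^{\mathcal{A}}-\alpha\mathrm{I}\in\mathcal{M}\Phi_{+}(H_{\mathcal{A}}\oplus H_{\mathcal{A}})$. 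Otherwise $\alpha\notin L(\mathrm{F},\mathrm{D})$, which means precisely that there exist decompositions for $\mathrm{F}-\alpha 1$ and $\mathrm{D}-\alpha 1$ satisfying the hypotheses of Theorem \ref{t318} --- including the auxiliary $\iota\in B^{a}(N_{1}^{\prime},N_{2})$ which is an isomorphism onto its image --- and then, combined with $\mathrm{F}-\alpha 1\in\mathcal{M}\Phi_{+}(H_{\mathcal{A}})$, Theorem \ref{t318} again produces such a $\mathrm{C}$. In either case $\alpha\notin\bigcap_{\mathrm{C}}\sigma_{le}^{\mathcal{A}}(\mathbf{M}_{\mathrm{C}}^{\mathcal{A}})$, which is exactly the right-hand inclusion.

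No new analytic input is required; the substance is entirely contained in Theorem \ref{t316} and Theorem \ref{t318}. The one point demanding care --- and the step I expect to be the main obstacle --- is the purely formal matching of definitions: one must verify that $L^{\prime}(\mathrm{F},\mathrm{D})$ and $L(\mathrm{F},\mathrm{D})$, as defined above, are precisely the complements of the hypothesis-clauses of Theorem \ref{t316} and Theorem \ref{t318} respectively (lining up the finite-generation conditions on $N_{1},N_{2},N_{1}^{\prime},N_{2}^{\prime}$ and the auxiliary isomorphism $\iota$), and that passing between $\mathrm{F}-\alpha 1$ on $H_{\mathcal{A}}$ and $\mathrm{F}^{\prime}-\alpha\mathrm{I}$ on $H_{\mathcal{A}}\oplus H_{\mathcal{A}}$ indeed preserves membership in $\mathcal{M}\Phi_{+}$. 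Once these bookkeeping matters are settled, the corollary follows formally.
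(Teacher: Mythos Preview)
Your proposal is correct and matches the paper's intended approach: the paper states Corollary~\ref{c320} without proof, as the direct analogue of Corollary~\ref{c314}, to be read off from Theorem~\ref{t316} and Theorem~\ref{t318} in exactly the way you describe. Your bookkeeping regarding the passage between $\mathrm{F}-\alpha 1$ and $\mathrm{F}^{\prime}-\alpha\mathrm{I}$ and the unpacking of $L(\mathrm{F},\mathrm{D})$, $L^{\prime}(\mathrm{F},\mathrm{D})$ is precisely what is needed.
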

\textbf{Acknowledgements:}	
		First of all, I am grateful to Professor Dragan S. Djordjevic for suggesting the research topic of the paper and for introducing to me the relevant reference books and papers. In addition,  I am especially grateful to my supervisors, Professor Vladimir M. Manuilov and Professor  Camillo Trapani, for careful reading of my paper and for detailed comments and suggestions which led to the improved presentation of the paper.



\begin{thebibliography}{99}
		
		\bibitem{DDj} Dragan S. Djordjevi\'{c}, Perturbations of spectra of operator matrices, J. Operator Theory 48(2002), 467-486.
		
		\bibitem{HLL} J.H. Han, H.Y. Lee, W.Y. Lee, Invertible completions of $2 \times 2$ upper triangular operator matrices, Proc. Amer.Math. Soc. 128 (2000), 119-123.
		
		\bibitem{I} S. Ivkovi\'{c} , Semi-Fredholm theory on Hilbert C*-modules, Banach Journal of Mathematical Analysis, to appear (2019), arXiv:  https://arxiv.org/abs/1906.03319 
		
		\bibitem{MF} A. S. Mishchenko, A.T. Fomenko, The index of eliptic operators over C*-algebras, Izv. Akad. Nauk SSSR Ser. Mat. \textbf{43} (1979), 831--859; English transl., Math. USSR-Izv.\textbf{15} (1980) 87--112.
		
		\bibitem{MT} V. M. Manuilov, E. V. Troitsky, Hilbert C*-modules, In: Translations of Mathematical Monographs. 226, American Mathematical Society, Providence, RI, 2005.
		
		\bibitem{W} N. E. Wegge –Olsen,  K-theory and C*-algebras, Oxford Univ. Press, Oxford,  1993.	
		
		\bibitem{ZZRD} S. \v{Z}ivkovi\'{c} Zlatanovi\'{c}, V. Rako\v{c}evi\'{c}, D.S. Djordjevi\'{c}, Fredholm theory, University of Ni\v{s} Faculty of Sciences and Mathematics, Ni\v{s}, (2019).
		
		
		
		
	\end{thebibliography}
\end{document}